\documentclass{amsart}
\usepackage[utf8]{inputenc}
\usepackage{url}
\usepackage{hyperref}
\usepackage{amsmath,amsfonts}
\usepackage{amsthm}
\usepackage[colorinlistoftodos]{todonotes}

\newtheorem{theorem}{Theorem}[section]
\newtheorem{lemma}[theorem]{Lemma}
\newtheorem{corollary}[theorem]{Corollary}
\newtheorem{proposition}[theorem]{Proposition}

\newtheorem{conjecture}[theorem]{Conjecture}
\theoremstyle{definition}
\newtheorem{definition}[theorem]{Definition}
\newtheorem{example}[theorem]{Example}
\newtheorem{remark}[theorem]{Remark}
\newcommand{\T}{\overline{\R}}

\newcommand{\R}{\mathbb{R}}
\newcommand{\K}{\mathbb{K}}

\newcommand{\C}{\mathbb{C}}
\newcommand{\bnn}{{\binom{[n]}2}}
\newcommand{\p}{\mathbf{p}}
\newcommand{\RR}{\mathcal{R}}

\newcommand{\HH}{\mathcal{H}}
\newcommand{\MM}{\mathcal{M}}

\newcommand{\ab}{\mathbf{a}}
\newcommand{\bb}{\mathbf{b}}

\newcommand{\Sep}{\operatorname{FP}}
\newcommand{\Sepplus}{\operatorname{FP}^+}

\newcommand{\Tree}{\mathcal{T}\hskip-3pt\mathit{ree}}
\newcommand{\Gr}[2]{\mathcal{G}\hskip-1pt\mathit{r}_#1(#2)}
\newcommand{\Pf}[2]{\mathcal{P}\hskip-1pt\mathit{f}\hskip-1pt_{#1}(#2)}
\newcommand{\Ass}[2]{\mathcal{A}\hskip-0pt\mathit{ss}_{#1}(#2)}
\newcommand{\OvAss}[2]{\overline{\mathcal{A}\hskip-0pt\mathit{ss}}_{#1}(#2)}
\newcommand{\PV}[2]{\mathrm{Pf}_{#1}(#2)}
\newcommand{\PVplus}[2]{\mathrm{Pf}_{#1}^+(#2)}
\newcommand{\g}{\textbf{g}}
\newcommand{\Grob}{\operatorname{Grob}}
\newcommand{\Skew}{\operatorname{Sym}}

\DeclareMathOperator{\trop}{trop}
\DeclareMathOperator{\rank}{rank}
\DeclareMathOperator{\ini}{in}

\title{Multitriangulations and tropical Pfaffians}
\thanks{{Supported by grant PID2019-106188GB-I00 funded by MCIN/AEI/10.13039/501100011033, by FPU19/04163 of the Spanish Government, and by project CLaPPo (21.SI03.64658) of Universidad de Cantabria and Banco Santander}}

\author{Luis Crespo Ruiz \and Francisco Santos}
\address{
Departamento de Matem\'aticas, Estad\'istica y Computaci\'on,
Universidad de Canta\-bria,
39005 Santander, Spain
}
\email{francisco.santos@unican.es, luis.cresporuiz@unican.es}

\begin{document}


%

\begin{abstract}
    The $k$-associahedron $\Ass{k}{n}$ is the simplicial complex of $(k+1)$-crossing-free subgraphs of the complete graph with vertices on a circle. Its facets are called $k$-triangulations.
    We explore the connection of $\Ass{k}{n}$ with the \emph{Pfaffian variety} $\Pf{k}{n}\subset \K^\bnn$ of antisymmetric matrices of rank $\le 2k$.
 
        First, we characterize the Gr\"obner cone $\Grob_k(n)\subset\R^\bnn$ for which the
        initial ideal of $I(\Pf{k}{n})$  equals
        the Stanley-Reisner ideal of $\Ass{k}{n}$ (that is, the monomial ideal generated by $(k+1)$-crossings).
        This implies that $k$-triangulations are bases in the algebraic matroid of $\Pf{k}{n}$, a matroid closely related to low-rank completion of antisymmetric matrices.
    
        We then look at the tropicalization of $\Pf{k}{n}$ and show that $\Ass{k}{n}$ embeds naturally as the intersection of $\trop(\Pf{k}{n})$ and $\Grob_k(n)$, and that it is contained in the \emph{totally positive} part $\trop^+(\Pf{k}{n})$ of it. 
        
        We show that for $k=1$ and for each triangulation $T$ of the $n$-gon, the projection of this embedding of $\Ass{k}{n}$ to the $n-3$ coordinates corresponding to diagonals in $T$ gives a complete polytopal fan, realizing the associahedron. This fan is linearly isomorphic to the $\g$-vector fan of the cluster algebra of type $A$, shown to be polytopal by Hohlweg, Pilaud and Stella in (2018).
    
\end{abstract}
\maketitle

\tableofcontents

\section{Introduction}
\label{sec:intro}
Throughout the paper we consider $[n]=\{1,2,\dots,n\}$ as the vertex set of a complete graph $K_n$, thus calling \emph{edges} its size-two subsets. We think of the $n$ vertices as drawn in order along a circle (or any other convex closed curve in the plane) so that two edges $\{i,j\}, \{i',j'\} \in \bnn$ with $i<j$ and $i'<j'$ are said to \emph{cross} each other if either $i<i'<j<j'$ or $i'<i<j'<j$. A \emph{$k$-crossing} is a set of $k$ edges that mutually cross each other.

\subsection*{Multitriangulations}
\begin{definition}
A subset $T\subseteq \bnn$ is called $(k+1)$-free if it contains no $(k+1)$-crossing. 
The maximal $(k+1)$-free graphs are called $k$-\emph{triangulations}, or multitriangulations if $k$ is not specified.
\end{definition}

Many nice combinatorial properties of $k$-\emph{triangulations} are known \cite{PilPoc,PilSan,Stump}. For starters, they all have the same cardinality, equal to $k(2n-2k-1)$ \cite{Naka,DKM}. 
We are interested in the abstract simplicial complex $\Ass{k}{n}$ on the vertex set $\binom{[n]}2$ whose faces are $(k+1)$-free graphs.
Hence,  facets are $k$-triangulations.

If an edge $\{i,j\}$ has $|i-j|\le k$ (where indices are taken modulo $n$, and distance is measured cyclically), then it lies in every $k$-triangulation since it cannot be part of any $(k+1)$-crossing. We call these edges \emph{irrelevant} and call \emph{irrelevant face} the face of 
$\Ass{k}{n}$ they span. We can thus define the reduced complex, $\OvAss{k}{n}$, the faces of which are the $(k+1)$-free sets of \emph{relevant} edges. The exact relation between $ \Ass{k}{n}$ and $\OvAss{k}{n}$ is that the former is the join of the latter with the irrelevant face, and hence the latter is the link of the former at the irrelevant face. 
Based on the fact that  $\OvAss{1}{n}$ is the polar complex to the face poset of the standard associahedron we define:

\begin{definition}
We call $\OvAss{k}{n}$ the $k$-associahedron, or \emph{multiassociahedron} of parameters $n$, $k$.  We refer to $\Ass{k}{n}$ as the \emph{extended multiassociahedron}.
\end{definition}

What we have said so far implies that $\OvAss{k}{n}$ is a pure complex of dimension $k(n-2k-1)-1$.
Jonsson \cite{Jonsson1,Jonsson2} proved it to be a shellable simplicial sphere, and conjectured it to be polytopal. This conjecture is one of our motivations:

\begin{conjecture}[Jonsson]
\label{conj:jonsson}
For every $k\ge 1$ and $n\ge 2k+1$,
$\OvAss{k}{n}$ is isomorphic to the face lattice of a simplicial polytope of dimension 
$k(n-2k-1)$.
\end{conjecture}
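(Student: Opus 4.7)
The plan is to realize $\OvAss{k}{n}$ as the boundary complex of a simplicial polytope by producing an explicit complete polytopal simplicial fan in $\R^{k(n-2k-1)}$ whose maximal cones correspond bijectively to the $k$-triangulations of the $n$-gon and whose rays correspond to the relevant edges. The natural candidate, suggested by the framework announced in the abstract, is to use the image of $\Ass{k}{n}$ inside $\trop(\Pf{k}{n})\cap\Grob_k(n)$ and then project along a reference $k$-triangulation $T$: that is, restrict the coordinates to the $k(n-2k-1)$ entries indexed by the relevant edges of $T$.

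I would first verify that this projection is well-defined and injective on each cone: because $T$ is a facet of $\OvAss{k}{n}$, its relevant diagonals should serve as local coordinates, and by the flip-connectivity and sphericity already known for $\OvAss{k}{n}$ (Jonsson's shellability) the resulting picture is a complete simplicial fan $\mathcal{F}_T$ whose maximal cones correspond to $k$-triangulations and whose wall crossings correspond to flips. The content here is essentially that the linear hulls of the cones determined by other $k$-triangulations project isomorphically onto $\R^{k(n-2k-1)}$, which is plausible precisely because the variety $\Pf{k}{n}$ has dimension equal to the cardinality of a $k$-triangulation.

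Next I would try to prove that $\mathcal{F}_T$ is polytopal by exhibiting a strictly convex piecewise-linear support function whose domains of linearity are exactly the maximal cones of $\mathcal{F}_T$. For $k=1$ one can borrow the construction of Hohlweg--Pilaud--Stella and identify $\mathcal{F}_T$ with the $\g$-vector fan of the cluster algebra of type $A$, as advertised in the abstract. For general $k$, the natural candidate is to read the heights off tropical data, namely from a generic point of the totally positive tropicalization $\trop^+(\Pf{k}{n})$, since total positivity is the known source of polytopality for $\g$-vector fans of generalized associahedra and the abstract already places $\Ass{k}{n}$ inside $\trop^+(\Pf{k}{n})$.

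The main obstacle is precisely this last step for $k\ge 2$. Beyond $k=1$ no cluster-algebra structure is known on multitriangulations, so there is no off-the-shelf strictly convex function to reach for; moreover, $\trop(\Pf{k}{n})$ is far more intricate than $\trop(\Pf{1}{n})$ (which is essentially the space of phylogenetic trees), and extracting a strictly convex support function from $\trop^+(\Pf{k}{n})$—or even proving one exists—is exactly the content of Jonsson's conjecture in the cases it remains open. I would therefore expect the proposal to settle the $k=1$ case cleanly, to produce the fan $\mathcal{F}_T$ unconditionally, and to reduce the full conjecture to a concrete polytopality question about the totally positive Pfaffian tropical variety.
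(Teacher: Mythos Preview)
The statement you are addressing is a \emph{conjecture}, and the paper does not prove it; it only recovers the $k=1$ case and a handful of small parameters. So there is no ``paper's own proof'' to compare against. Your proposal is, accordingly, not a proof but a strategy sketch, and your final paragraph essentially concedes this.

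More importantly, the part of your strategy that you expect to go through ``unconditionally'' --- namely, that projecting $\PVplus{k}{n}$ to the coordinates of a seed $k$-triangulation $T$ yields a complete simplicial fan $\mathcal{F}_T$ for every $k$ --- is in fact \emph{false} for $k\ge 2$, and the paper says so explicitly. In the remark following Corollary~\ref{coro:fan} the authors take any $k$-triangulation $T$ containing all edges $\{1,i\}$ and $\{2,i\}$ and exhibit a cone of $\PVplus{k}{n}$ (coming from a $k$-triangulation $T'$ missing some $\{1,i\}$) on which the relation $v_{1,i}+v_{2,i+1}=v_{1,i+1}+v_{2,i}$ holds identically; hence the projection $\pi$ collapses that cone to lower dimension and is not injective. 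So the obstruction appears already at the fan level, not only at the convexity step. The paper reinforces this at the very end: it proves that for $k>1$ and $n$ large enough no assignment of $\g$-vector-type coefficients can realize $\OvAss{k}{n}$ as a fan via this mechanism, because there will always exist a relevant edge lying outside every adjacent pair of stars of $T$, forcing a linear dependence among the proposed rays.

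Your intuition that ``$T$ is a facet, so its relevant diagonals should serve as local coordinates'' is exactly what breaks: being a basis of the algebraic matroid of $\Pf{k}{n}$ (which the paper does establish for $k$-triangulations) guarantees algebraic, not tropical-linear, independence of the corresponding coordinate functions on nearby cones. For $k=1$ the two notions happen to coincide because of the tree structure, and that is precisely what makes Lemma~\ref{lemma:projection} work there and fail beyond.
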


Besides the case $k=1$, the conjecture is known to hold for
following cases:
    $n\le 2k+3$
    and for $n=2$ and $k=8$ there is an ad-hoc construction of the polytope~\cite{BokPil}.
    In a forthcoming paper~\cite{CreSan-associa} we show polytopality for the remaining cases with $n\le10$, namely $(k,n) \in \{(2,9),(2,10),(3,10)\}$.
For some additional cases there are constructions that realize $\OvAss{k}{n}$ as a complete simplicial fan. This happens for $n=2k+4$~\cite{BCL} and for $k=2$ and $n\le 13$ \cite{Manneville}.

 Polytopality of $\OvAss{k}{n}$ is also relevant from the perspective of Coxeter combinatorics. Let $(W,S)$ be a Coxeter system. 
  Let $w\in W$ be an element in the group and $Q$ a word of a certain length $N$ and containing a reduced expression for $w$ as a subword. The \emph{subword complex} of $Q$ and $w$ is the simplicial complex with vertex set $\{1,\dots,N\}$ consisting of subsets of positions that can be deleted from $Q$ and still contain a reduced expression for $w$. Kuntson and Miller~\cite[Theorem 3.7 and Question 6.4]{KnuMil} proved that every subword complex is either a shellable ball or sphere, and they asked whether all spherical subword complexes are polytopal. It turns out that  $\OvAss{k}{n}$ is a spherical subword complex for the Coxeter system of type $A_{n-2k-1}$~\cite[Theorem 2.1]{Stump} and, moreover, it is \emph{universal}: every other spherical subword complex of type $A$ appears as a link in some $\OvAss{k}{n}$~\cite[Proposition 5.6]{PilSan:brick}. In particular, Conjecture~\ref{conj:jonsson} would answer the question of Knutson and Miller in the positive. (Versions of multiassociahedra for the rest of finite Coxeter groups exist, with the same implications~\cite{CLS14}).

\subsection*{Pfaffians and tropical varieties}


In the case $k=1$, one way of realizing the associahedron is as the positive part of the space of ``tree metrics'', which coincides with the tropicalization $\trop(\Gr2n)$ of the Grassmannian $\Gr2n$ (see \cite{SpeStu,SpeWil}, or Remark~\ref{rem:tree_metrics}). More precisely:

\begin{theorem}[\protect{\cite[Section 5]{SpeWil}}]
The totally positive tropical Grassmannian $\trop^+(\Gr2n)$ is a simplicial fan isomorphic to (a cone over) the extended associahedron $\OvAss{1}{n}$.
\end{theorem}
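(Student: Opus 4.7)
The plan is to build the isomorphism in two layers: first identify the underlying set of $\trop^+(\Gr{2}{n})$ combinatorially, and then match its induced fan structure (from, say, the Gröbner fan of the Plücker ideal) with the face structure of $\Ass{1}{n}$.

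The starting point is the theorem of Speyer and Sturmfels that $\trop(\Gr{2}{n})$, modulo its $n$-dimensional lineality space, is exactly the space of (pseudo-)tree metrics on $n$ points, i.e. vectors $(d_{ij})_{ij\in\binom{[n]}2}$ satisfying the four-point condition. Each such vector comes from a unique weighted tree with $n$ labeled leaves, and the fan structure has maximal cones in bijection with the trivalent labeled trees on $[n]$. Step one of the proof would be to recall this description and explain that a cone is determined by specifying which of the three monomials in each three-term tropical Plücker relation attains the minimum, or equivalently by specifying the combinatorial type of the tree.

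Step two is to isolate the totally positive part. The sign rule defining $\trop^+$ picks out, in each three-term Plücker relation $p_{ij}p_{kl}-p_{ik}p_{jl}+p_{il}p_{jk}=0$ (for $i<j<k<l$), the condition that the two positive monomials tie and beat the negative one. Translated to tree metrics, this forces the induced quartet on every four leaves $i<j<k<l$ to be the ``cyclic'' one, namely the one that pairs $\{i,j\}$ with $\{k,l\}$ rather than separating them by $\{i,k\},\{j,l\}$. The content here is that this four-point cyclic condition, imposed on every quadruple, is equivalent to the weighted tree embedding in the plane with its leaves read in the cyclic order $1,2,\dots,n$ around the boundary. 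This is the main computational step, but it is a standard sign-bookkeeping argument about tropical Plücker relations.

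Step three is combinatorial: show that the trees with $n$ leaves arranged cyclically in the order $1,\dots,n$, together with their contractions, are naturally indexed by non-crossing subgraphs of $K_n$. Each internal edge of such a tree induces a split of $[n]$ into two cyclically consecutive arcs, i.e.\ a diagonal of the $n$-gon; each pendant edge corresponds to one of the $n$ boundary edges $\{i,i+1\}$ (the irrelevant edges). Two diagonals are non-crossing iff their splits are compatible, so trivalent such trees biject with triangulations of the $n$-gon and their contractions biject with non-crossing sets of diagonals. Packaging pendant edges as the lineality directions that come for free in every tree, we get a simplicial fan whose rays are indexed by all edges of $K_n$ (diagonals together with boundary edges), whose maximal cones are indexed by triangulations, and whose lower-dimensional cones are indexed by non-crossing graphs---i.e.\ precisely the face structure of $\Ass{1}{n}$.

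The main obstacle is the sign analysis in step two: one has to verify carefully that the ``totally positive'' tropical sign conditions, imposed globally via the three-term Plücker relations, translate exactly and without slack into the cyclic four-point condition on tree metrics, and that the resulting cones of $\trop^+(\Gr{2}{n})$ coincide as polyhedral cones (not merely set-theoretically) with the cones of the corresponding fan over $\Ass{1}{n}$. Once this is in place, simpliciality of the fan is automatic from the combinatorics of non-crossing diagonals, and the identification with a cone over $\OvAss{1}{n}$ follows by quotienting out the $n$-dimensional lineality space spanned by the pendant-edge directions, which corresponds to deleting the irrelevant face.
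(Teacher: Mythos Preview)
This statement is quoted from \cite{SpeWil} and the paper does not give its own proof; the closest it comes is the remark in the introduction that, since the crossing monomial is the unique term of its sign in each three-term Pl\"ucker relation, one has $\trop^+(\Gr2n)=\trop(\Gr2n)\cap\Sep_n=\PVplus{1}{n}$, after which Theorem~\ref{thm:main} (for $k=1$) identifies $\PVplus{1}{n}$ with the cone over $\Ass{1}{n}$. Your three-step plan follows exactly this route, so the architecture is sound.

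Step~2, however, has the sign reversed, and this is a genuine error rather than a convention issue. You say positivity forces ``the two positive monomials to tie and beat the negative one.'' In your Pl\"ucker relation $p_{ij}p_{kl}-p_{ik}p_{jl}+p_{il}p_{jk}$ (with $i<j<k<l$), that would make the initial form equal to $p_{ij}p_{kl}+p_{il}p_{jk}$, a polynomial with all coefficients of one sign---precisely what the definition of $\trop^+$ \emph{forbids}. The correct condition is that the crossing monomial $p_{ik}p_{jl}$, the unique odd-signed term, must appear in the initial form; in the paper's max convention this reads $v_{ik}+v_{jl}\ge\max\{v_{ij}+v_{kl},\,v_{il}+v_{jk}\}$, i.e.\ the fp-positivity inequality~\eqref{eq:fpp}. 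Your quartet translation inherits the same mistake: you claim the induced quartet on $i<j<k<l$ must be $ij\mid kl$, but if that held for every four leaves there would be essentially one tree, not Catalan-many. The right statement is that the quartet $ik\mid jl$ (the crossing split) is \emph{forbidden} while both $ij\mid kl$ and $il\mid jk$ are allowed---this is exactly the condition for the tree to be planar with leaves in the cyclic order $1,\dots,n$. With that correction your Step~3 goes through unchanged.
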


Let us briefly recall what the tropicalization of a variety, and its positive part, are. (See also \cite{BLS}). Let $I\subset\K[x_1,\dots,x_N]$ be a polynomial ideal and let $V=V(I)\subset \K^N$ be its corresponding variety. Each vector $v\in \R^N$, considered as giving weights to the variables, defines an initial ideal $\ini_v(I)$, consisting of the initial forms $\ini_v(f)$ of the polynomials in $f$. For the purposes of this paper we take the following definitions. (These are not the standard definitions, but are equivalent to them as shown for example in \cite[Propositions 2.1 and 2.2]{SpeWil}):

\begin{definition}
\label{defi:tropical}
The \emph{tropical variety} $\trop(V)$ of $V$ equals the set of $v\in \R^N$ for which $\ini_v(I)$ does not contain any monomial. If $\K=\C$, the \emph{totally positive part} of $\trop(V)$, denoted $\trop^+(V)$,  equals the set of $v\in \R^N$ for which $\ini_v(I)$ does not contain any polynomial with all coefficients real and positive.
\end{definition}

Pachter and Sturmfels~\cite[p.~107]{PacStu} hint at the fact that the relation between the associahedron and $\Gr2n$ extends to a relation between the multiassociahedron $\Ass{k}{n}$ and the tropical variety of Pfaffians of  degree $k+1$. 
Recall that a \emph{Pfaffian of degree $k$} is the square root of  the determinant of an antisymmetric matrix $M$ of size $2k$.
Considering the entries of  $M$ as indeterminates (over a certain field $\K$), the Pfaffian is a homogeneous polynomial of degree $k$ in $\K[x_{i,j}, \{i,j\} \in \binom{[2k]}2]$, with one monomial for each of the $(2k-1)!!$  perfect matchings in $[2k]$ (see Section \ref{sec:pfaffians}). 

For each $n\ge 2k+2$, let $I_k(n)$ be the ideal in $\K[x_{i,j}, \{i,j\} \in \binom{[n]}2]$
generated by all the Pfaffians of degree $k+1$.
Let $\Pf{k}{n}\subset\K^{\binom{n}{2}}$ be the corresponding algebraic variety. 
That is, points in $\Pf{k}{n}$ are antisymmetric $n\times n$ matrices  with coefficients in $\K$ and of rank at most $2k$. 
It is well-known and easy to see that $\Pf{1}{n}$ equals the Grassmannian $\Gr2n$ in its Pl\"ucker embedding and, as pointed out in \cite{PacStu}, 
$\Pf{k}{n}$ equals the $k$-th secant variety of it.

For $k=1$,  Pfaffians are a universal Gr\"obner basis of $I_k(n)$ \cite{PacStu,SpeStu}. For $k>1$ they are not (see Example~\ref{exm:UGB}), but it is known that they are a Gr\"obner basis for certain choices of monomial orders: in \cite{HerTru} it is proved that this happens for a $v$ that selects as initial monomial in each Pfaffian the $(k+1)$-nesting and in \cite{JonWel} for one that selects the $(k+1)$-crossing.

\subsection*{This paper}

We explore relations between $k$-triangulations and the variety $\Pf{k}{n}$. Our starting point is 
restricting Gr\"obner bases and tropicalization to weight vectors satisfying the following ``four-point positivity'' conditions.

\begin{definition}
\label{defi:fpp}
We say that a weight vector $v\in \R^{\bnn}$ is \emph{four-point positive}
(abbreviated \emph{fp-positive}) 
if for all $1\le a<a'<b<b'\le n$ we have that
\begin{align}
v_{a,b} + v_{a',b'} \ge \max\{v_{a,a'} + v_{b,b'}, v_{a,b'} + v_{a',b}\}.
\label{eq:fpp}
\end{align}
\end{definition}

We denote by $\Sep_n$ the subset of $\R^{\bnn}$ consisting of fp-positive vectors.
That is to say, $v\in \Sep_n$ if the maximum weight given by $v$ to the three matchings among four points is attained always for the matching that forms a $2$-crossing.

Although the polyhedron $\Sep_n\subset\R^\bnn$ of fp-positive vectors (the solution set of equations~\eqref{eq:fpp}) is defined by $2\binom{n}4$ inequalities, the following  $\binom{n}{2} - n$ alone are an irredundant description of it:
\begin{align}
v_{a,b}+v_{a+1,b+1}-v_{a,b+1}-v_{a+1,b}\ge 0, \quad \forall \{a,b\} \in \bnn \text{ with $|a-b|>1$},
\label{eq:sep}
\end{align}
with indices considered cyclically. 
The left-hand side vectors (that is, the  facet normals of $\Sep_n$) are linearly independent, so that $\Sep_n$ is linearly isomorphic to an orthant plus a lineality space of dimension $n$.  
We like to think of $\Sep_n$ as the ``positive orthant'' of $\R^\bnn$ regarding Pfaffians. It can be interpreted as the space of weights that represent \emph{separation vectors} among sides of the $n$-gon, or as the weights that are monotone with respect to crossings among perfect matchings of each fixed even set $U\subset [n]$. See Proposition \ref{prop:fpp} and Corollary~\ref{coro:fpp} for details.

Algebraically, fp-positive vectors are the monomial weight vectors for which the leading form of every 3-term Pl\"ucker relation 
\[
x_{a,b}  x_{a',b'} - x_{a,a'} x_{b,b'} - x_{a,b'} x_{a',b},
\qquad
1\le a<a'<b<b'\le n,
\]
contains the crossing monomial.
These relations generate the ideal of the Grassmannian $\Gr{2}{n}$. In particular, fp-positive vectors are the (closed)  Gr\"obner cone of $\Gr{2}{n}$ producing as initial ideal the one generated by $2$-crossings $x_{a,b}  x_{a',b'}$.

Extending this, we denote by $\Grob_k(n)\subset \R^{\bnn}$ the Gr\"obner cone consisting of weights that select the $(k+1)$-crossing as the leading monomial (or as one of them) in every Pfaffian of degree $k+1$. What we say above can then be stated as $\Grob_1(n) = \Sep_n$, and the result of \cite{JonWel} says that $\Grob_k(n)$ has non-empty interior. Our main results in Section~\ref{sec:Sn} are that $\Sep_n\subset \Grob_k(n)$ (Theorem \ref{thm:groebner}) plus  an explicit description of $\Grob_k(n)$, both by inequalities and by generators (Theorem \ref{thm:groebner-cone}).

\begin{theorem}[Theorem \ref{thm:groebner-cone}]
\label{thm:groebner-intro}
  For any $k>2n+2$, $\Grob_k(n)\subset \R^\bnn$ is a simplicial cone with a lineality space of dimension $n$.
\begin{enumerate}
\item  
It is generated by:
  \begin{itemize}
      \item (lineality space) For each $i\in [n]$, the line generated by the indicator vector of the set $\{ \{i,j\} : j\in [n]\setminus i\}$.
      \item (``short'' generators) For each $\{i,j\}\in [n]$ with $1\le |i-j|\le k$, the negative basis vector corresponding to $\{i,j\}$.
      \item (``long'' generators) For each $\{i,j\}\in [n]$ with $ |i-j|\ge k+2$, the ray of $\Sep_n$ corresponding to $\{i,j\}$.
  \end{itemize}
    \item  An irredundant facet description of it is given by the following $\bnn-n$ inequalities:
      \begin{itemize}
      \item (``long'' inequalities)  For each $\{i,j\}\in [n]$ with $ |i-j|\ge k+1$, the inequality \eqref{eq:sep} corresponding to $\{i,j\}$.
      \item (``short'' inequalities) For each $\{i,j\}\in [n]$ with $2\le |i-j|\le k$, the sum of the inequalities \eqref{eq:sep} corresponding to all the $\{i',j'\}$ with $|i'-j'|\le k+1$ and with $\{i,j\}$ contained in the short side of $\{i',j'\}$.
      \end{itemize}
    In particular, $\Grob_k(n)$   contains $\Sep_n$ for every $k$ and $n$.
    \end{enumerate}
\end{theorem}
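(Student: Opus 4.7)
The plan is to establish both the generator and the inequality description of $\Grob_k(n)$ in parallel and then conclude via simplicial-cone duality. Denote by $C$ the cone generated by the vectors listed in part~(1) and by $P$ the polyhedron cut out by the inequalities listed in part~(2). Showing the chain $C\subseteq\Grob_k(n)\subseteq P$ together with $C=P$ forces all three to coincide and proves (1) and (2) simultaneously.

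For $C\subseteq\Grob_k(n)$, I verify each generator individually. A lineality generator (indicator vector of a vertex star) lies in the lineality of $\Grob_k(n)$: adding a multiple of it to $v$ leaves every Pfaffian inequality intact, since each perfect matching of any even subset $U\subset[n]$ uses each vertex of $U$ exactly once, so the star contributes the same constant to both sides. A short generator --- the negative basis vector at $\{i,j\}$ with $|i-j|\le k$ --- corresponds to an edge too short to sit in any $(k+1)$-crossing; so $\{i,j\}$ does not appear in any crossing monomial, and lowering $v_{i,j}$ can only weaken competing monomials, preserving every Pfaffian inequality. The long generators are extreme rays of $\Sep_n$, and $\Sep_n\subseteq\Grob_k(n)$ by Theorem~\ref{thm:groebner}.

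For $\Grob_k(n)\subseteq P$, I realize each listed inequality as a Pfaffian inequality. For a long inequality at $\{i,j\}$ with $|i-j|\ge k+1$, I select a $(2k+2)$-subset $U$ whose cyclic order reads $i,\,i{+}1,\,x_1,\dots,x_{k-1},\,j,\,j{+}1,\,y_1,\dots,y_{k-1}$; this is feasible because both arcs between $i$ and $j$ contain at least $k-1$ interior vertices beyond $i+1$ and $j+1$. On $U$ the crossing matching pairs $i$ with $j$ and $i+1$ with $j+1$; swapping just those two pairs to $\{i,j{+}1\}$ and $\{i{+}1,j\}$ yields a competitor, and the corresponding Pfaffian inequality is exactly the long inequality. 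For a short inequality with $2\le|i-j|\le k$, the sum of fp-positivities prescribed in the statement, expanded directly, telescopes into the Pfaffian inequality between the crossing matching on a suitable $(2k+2)$-subset and a specific rotated competitor (with a trivial common edge cancelling when $|i-j|>2$); hence it too is valid on $\Grob_k(n)$.

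Finally, the generator and inequality sets each have cardinality $\binom{n}{2}-n$, matching the codimension of the $n$-dimensional lineality in $\R^\bnn$. To conclude simpliciality together with $C=P$ it then suffices to exhibit a bijection under which each inequality is tight on all but one generator: pair short-with-short and long-with-long on matching indices, and the $n$ extra cyclic-distance-$1$ short generators with the $n$ extra cyclic-distance-$(k{+}1)$ long inequalities. The main obstacle I anticipate is precisely this incidence bookkeeping --- especially the off-distance pairing between distance-$1$ generators and distance-$(k{+}1)$ facets, together with the telescoping identity underlying the short inequalities --- which demands careful case analysis in the cyclic indexing. Once in place, polyhedral duality yields $C=P=\Grob_k(n)$ as a simplicial cone with the listed rays and facets, and both (1) and (2) follow at once.
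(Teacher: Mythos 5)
Your overall strategy — establishing $C\subseteq\Grob_k(n)\subseteq P$ and then forcing $C=P$ via a one-to-one incidence between generators and facets — is exactly the paper's. The verification that the listed generators lie in $\Grob_k(n)$ (lineality via vertex stars, short generators since a short edge cannot appear in any $(k+1)$-crossing monomial, long generators via $\Sep_n\subseteq\Grob_k(n)$) and the derivation of the long inequality from a Pfaffian on a suitable $U$ are all sound. But there are two concrete problems in the parts you flag as unfinished, and one of them is an actual error rather than just a gap.

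The pairing you propose is wrong. You want to match ``short with short on matching indices,'' ``long with long on matching indices,'' and then pair the distance-$1$ generators with the distance-$(k+1)$ long inequalities. In fact the correct incidence is a uniform shift: the ray opposite to the facet indexed by $\{i,j\}$ with $2\le|j-i|\le k+1$ is $-V_{i+1,j}$ (the negative $v$-basis vector at $\{i+1,j\}$, which has cyclic length $|j-i|-1$), while for $|j-i|\ge k+2$ it is the $w$-basis vector at $\{i,j\}$ itself. So distance-$d$ short generators pair with distance-$(d{+}1)$ facets for $d=1,\dots,k$; there are no ``extras'' to match across the short/long divide. If you try to carry out the incidence check with your pairing you will find, for example, that the distance-$1$ generator $-V_{a,a+1}$ does satisfy the distance-$2$ short inequality at $\{a-1,a+1\}$ strictly (via $w_{a-1,a+1}=\tfrac12$ in its $w$-expansion) and satisfies the distance-$(k+1)$ long inequality at unrelated edges with equality, which is the opposite of what your scheme predicts. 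The check that works is the one the paper does: compute $-V_{i+1,j}$ in $w$-coordinates via Proposition~\ref{prop:d_to_w} (it is supported on $w_{i,j}=w_{i+1,j-1}=\tfrac12$, $w_{i+1,j}=w_{i,j-1}=-\tfrac12$) and verify directly which of the long and short forms vanish on it.

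The short-inequality validity is not a mere bookkeeping step either, and ``telescoping into a rotated competitor'' does not obviously produce the stated sum. The short inequality at $\{i,j\}$ with $\ell=|j-i|\in\{2,\dots,k\}$ is $\sum_{i'\le i<j\le j'\le i'+k+1} w_{i',j'}\ge 0$, a sum of $w$'s over an explicit two-dimensional index range, not a single row. To realize it as a Pfaffian comparison one must exhibit a specific $(2k{+}2)$-set $U$ of consecutive vertices and a specific competitor matching $M$ (in the paper: the short edge $\{i{+}1,i{+}\ell\}$ together with a $k$-crossing of the remaining $2k$ vertices) and then check, using the identity $\mathrm{weight}(M)=\sum_{E}w_E\,c_M(E)$ with $c_M(E)$ the number of $M$-edges crossing $E$, that $c_M(E)=\ell_U(E)$ for all $E$ except precisely the $E=\{i',j'\}$ in the stated index range, for which $c_M(E)=\ell_U(E)-2$. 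Without this concrete construction, the claim that the sum of $w$-inequalities equals a single Pfaffian inequality is unsupported; and as stated, a ``rotated'' competitor (a single cyclic shift) does not produce a drop of $2$ uniformly over the stated rectangle of $\{i',j'\}$ unless the matching is chosen exactly as in the paper.
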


This description has the following combinatorial interpretation:
modulo its lineality space (of dimension $n$, equal to that of $\Sep_n$),  $\Grob_k(n)$ is a simplicial cone with one facet and generator corresponding to each of the $\bnn -n$ edges of length at least two. The ``long'' facet-inequalities (those corresponding to relevant edges) are the same as the corresponding ones in $\Sep_n$, and the ``short'' ones are looser in $\Grob_k(n)$ than in $\Sep_n$. 

Moreover, we show that the monomial initial ideal of $I_k(n)$ produced by any generic weight vector $v\in\Grob_k(n)$ equals the Stanley-Reisner ideal of $\Ass{k}{n}$. That is, to say, the ideal in $\K[x_{i,j}, \{i,j\} \in \binom{[2k]}2]$ generated by  $(k+1)$-crossings. This, in turn, implies that $k$-triangulations are bases of the algebraic matroid of $\Pf{k}{n}$ (Corollary~\ref{coro:bases}). We find this of interest for two reasons (see Section~\ref{sec:matroid} for details):

On the one hand, the algebraic matroid $\MM(\Pf{k}{n})$ of $\Pf{k}{n}$ is closely related to low-rank completion of antisymmetric matrices~\cite{Bernstein,KRT}: given a subset $T\subset\bnn$ of positions for entries in an antisymmetric matrix $M$ of size $n\times n$, a generic choice of values for those entries can be extended to an antisymmetric matrix of rank $\le 2k$ if and only if $T$ is independent in $\MM(\Pf{k}{n})$. Thus:

\begin{theorem}
\label{thm:matroid}
	Let $T\subset \bnn$.
	\begin{enumerate}
		\item If $T$  is $(k+1)$-free and $\K$ is algebraically closed, then for any generic choice of values $v\in \K^T$ there is at least one skew-symmetric matrix of rank $\le 2k$ with the entries prescribed by $v$.
		\item If $T$ contains a $k$-triangulation then for any choice of values $v\in \K^T$ there is only a finite number (maybe zero) of skew-symmetric matrices of rank $\le 2k$ with those prescribed entries.
	\end{enumerate}
\end{theorem}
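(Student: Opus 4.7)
The plan is to deduce both parts directly from Corollary~\ref{coro:bases}, which asserts that $k$-triangulations are bases of the algebraic matroid $\MM(\Pf{k}{n})$, by translating the matroid-theoretic conclusion into the geometric language of coordinate projections and their fibers.

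First I will recall the standard geometric characterization of the algebraic matroid of an irreducible affine variety $V \subset \K^N$: a subset $T \subseteq [N]$ is independent if and only if the coordinate projection $\pi_T \colon V \to \K^T$ is dominant (equivalently, $\{x_i|_V : i \in T\}$ are algebraically independent over $\K$), and $T$ spans the matroid if and only if $\dim \overline{\pi_T(V)} = \dim V$, equivalently the fibers of $\pi_T$ over a dense open subset of its image are $0$-dimensional.

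For part~(1), the assumption that $T$ is $(k+1)$-free means $T$ is a face of $\Ass{k}{n}$, so it is contained in some facet, i.e.\ in some $k$-triangulation $B$. By Corollary~\ref{coro:bases}, $B$ is a basis of $\MM(\Pf{k}{n})$; hence every subset of $B$, and in particular $T$, is independent, so $\pi_T$ is dominant. Over an algebraically closed field the image of a dominant morphism of affine varieties is constructible and dense (Chevalley's theorem), and therefore contains a Zariski-dense open subset of $\K^T$. Any $v$ in this open set has non-empty preimage in $\Pf{k}{n}$, providing the required antisymmetric matrix of rank $\le 2k$.

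For part~(2), the assumption that $T$ contains a $k$-triangulation means $T$ contains a basis of $\MM(\Pf{k}{n})$, so $T$ spans the matroid and $\dim \overline{\pi_T(\Pf{k}{n})} = \dim \Pf{k}{n}$. The theorem on fiber dimensions then yields that a generic fiber of $\pi_T$ is $0$-dimensional, i.e.\ finite. The only subtle step is that this matroidal reasoning directly provides finiteness only over a dense open of $\K^T$; extending the conclusion to \emph{every} $v\in\K^T$, as stated, requires either reading the conclusion in the generic sense, or invoking a finer property of $\Pf{k}{n}$---for instance combining Cohen--Macaulayness of the Pfaffian ideal with the Gr\"obner degeneration of Theorem~\ref{thm:groebner-intro} to exclude positive-dimensional fibers at special points. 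I expect this last point to be the main obstacle.
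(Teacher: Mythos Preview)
Your approach is essentially the same as the paper's: both parts are deduced from Corollary~\ref{coro:bases} by translating independence/spanning in $\MM(\Pf{k}{n})$ into dominance and fiber dimension of the coordinate projection $\pi_T$. The paper packages this translation as Theorem~\ref{thm:algebraic} and then simply cites it.

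Your concern about part~(2) is well-placed and worth flagging. The paper does not treat it as a subtlety: it asserts in Theorem~\ref{thm:algebraic}(3) that a set $S$ is spanning if and only if \emph{every} fiber of $\pi_S$ is finite (possibly empty), and its proof of that item is the one-line ``a projection has the same dimension as the variety if and only if the fiber has dimension zero.'' As you implicitly note, for an arbitrary irreducible variety this only gives finiteness of the \emph{generic} fiber; special fibers can be positive-dimensional (e.g.\ project $V(xy-z)\subset\K^3$ to the $(x,z)$-plane and look over $x=z=0$). So the paper's argument, taken at face value, has the same gap you identified. Your suggested fixes---either read the conclusion generically, or invoke additional structure of $\Pf{k}{n}$---are exactly the options available; the paper simply does not engage with this point.
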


On the other hand, the algebraic matroid of $\Pf{k}{n}$ coincides with the generic \emph{hyperconnectivity matroid} in dimension $2k$ defined by Kalai \cite{Kalai}. The fact that $k$-triangulations are bases in it is closely related to the conjecture by Pilaud and Santos~\cite{PilSan} that they are bases in the generic bar-and-joint rigidity matroid in dimension $2k$ (Conjecture~\ref{cor:bar-and-joint}).

\medskip

In Section \ref{sec:Vn} we turn our attention to the tropicalization  of $\Pf{k}{n}$.
More precisely, we denote $\PV{k}{n}\subset \R^\bnn$ the intersection of the tropical hypersurfaces corresponding to Pfaffians of degree $k$. This is by definition a tropical \emph{prevariety}. It contains the tropical variety $\trop(\Pf{k}{n})$, but it does not, in general, coincide with it, as we show in Theorem~\ref{thm:ranks}.

In the light of Theorem \ref{thm:groebner-intro}, it makes sense to look at the part of $\PV{k}{n}$ contained in the Gr\"obner cone $\Grob_k(n)$. That is, we define
\[
\PVplus{k}{n}:= \PV{k}{n} \cap \Grob_k(n).
\]
 Since the crossing monomial is the only positive monomial in each 3-term Pl\"ucker relation, for $k=1$ we have
\[
\trop^+(\Pf{1}{n}) = \trop(\Pf{1}{n}) \cap \Sep_n = \PVplus{1}{n}.
\]

We partially generalize this to higher $k$.
Theorem \ref{thm:main} says that for a $v\in \Grob_k(n)$, being in $\PV{k}{n}$ is equivalent to the fact that the ``long inequalities'' of Theorem \ref{thm:groebner-intro} (that is, the inequalities \eqref{eq:sep} for $|a-b|\ge k+1$)  are satisfied with equality except in a $(k+1)$-free set. Moreover, when this happens $v$ can be proved to be in $\trop(\Pf{k}{n})$, and in fact in $\trop^+(\Pf{k}{n})$ (Corollary~\ref{cor:balanced}).
This implies our main  result:

\begin{theorem}[See Theorem~\ref{thm:main} and Corollary~\ref{cor:balanced}]
\label{thm:V+}
 \ \null
 
 \begin{enumerate}    
    \item $\PVplus{k}{n} = \Grob_k(n)\cap \trop(\Pf{k}{n}) \subset \trop^+(\Pf{k}{n})$.
    
    \item $\PVplus{k}{n}$ is the union of the faces of $\Grob_k(n)$ corresponding to $(k+1)$-free graphs. 
\end{enumerate}
\end{theorem}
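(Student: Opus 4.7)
The plan is to reduce both statements to two targeted results proved separately in the paper: Theorem~\ref{thm:main}, which characterizes $\Grob_k(n)\cap \PV{k}{n}$ as the set of $v\in\Grob_k(n)$ on which the long inequalities~\eqref{eq:sep} are tight except on a $(k+1)$-free set of edges, and Corollary~\ref{cor:balanced}, which for every such $v$ produces an explicit lift of it to a point of $\Pf{k}{n}$ whose leading coefficients are real and positive. Granting these, both parts of Theorem~\ref{thm:V+} become combinatorial bookkeeping on the simplicial cone structure of $\Grob_k(n)$ given in Theorem~\ref{thm:groebner-intro}.

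For part (1), I would write down the chain of inclusions
\[
\Grob_k(n)\cap \trop(\Pf{k}{n})\ \subseteq\ \Grob_k(n)\cap \PV{k}{n}\ =\ \PVplus{k}{n}\ \subseteq\ \trop^+(\Pf{k}{n})\ \subseteq\ \trop(\Pf{k}{n}).
\]
The first inclusion is tautological, because each degree-$(k+1)$ Pfaffian lies in $I_k(n)$, so $\ini_v(I_k(n))$ containing no monomial forces every $\ini_v(\mathrm{Pf}_U)$ to be non-monomial. The last inclusion is the definitional one. The middle inclusion $\PVplus{k}{n}\subseteq\trop^+(\Pf{k}{n})$ is the content of Corollary~\ref{cor:balanced}: given $v\in\PVplus{k}{n}$, Theorem~\ref{thm:main} delivers the $(k+1)$-free ``strict long'' set of edges associated to $v$, and the corollary uses it to construct an antisymmetric rank-$\le 2k$ matrix over $\mathbb{C}\{\{t\}\}$ whose entries have valuations equal to $v$ and positive leading coefficients. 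Intersecting the chain with $\Grob_k(n)$ collapses it, yielding the desired identity together with the $\trop^+$ containment.

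For part (2), I would unpack the face lattice of $\Grob_k(n)$. Being simplicial, each face $F$ is uniquely specified by the subset of facet inequalities strict on its relative interior; by Theorem~\ref{thm:groebner-intro}(2) the facets are partitioned into long ($|i-j|\ge k+1$) and short ($2\le|i-j|\le k$) types. Theorem~\ref{thm:main} then says that $F\subseteq\PVplus{k}{n}$ if and only if the long-strict part of this subset is a $(k+1)$-free set of edges, which is exactly the defining combinatorial datum of the face of $\Grob_k(n)$ corresponding to a $(k+1)$-free graph (the short and irrelevant edges being absorbed into the lineality direction). The principal obstacle of the whole argument lies in Corollary~\ref{cor:balanced}: producing the lift to a rank-$\le 2k$ antisymmetric matrix, which one expects to accomplish by parametrizing such matrices as $AJA^{t}$ for $A\in \mathbb{C}\{\{t\}\}^{n\times 2k}$ with $J$ a standard symplectic form, then tuning the entry valuations of $A$ so that the resulting Pfaffian valuations match $v$ term by term without accidental cancellation.
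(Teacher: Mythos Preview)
Your reduction of both parts to Theorem~\ref{thm:main} and Corollary~\ref{cor:balanced}, and in particular the chain of inclusions
\[
\Grob_k(n)\cap \trop(\Pf{k}{n})\ \subseteq\ \Grob_k(n)\cap \PV{k}{n}\ =\ \PVplus{k}{n}\ \subseteq\ \trop^+(\Pf{k}{n})\ \subseteq\ \trop(\Pf{k}{n}),
\]
is exactly how the paper proceeds; its proof of Theorem~\ref{thm:V+} is literally one sentence citing those two results, and part~(2) is precisely Theorem~\ref{thm:main}(1) read through the simplicial structure of $\Grob_k(n)$.

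Where your proposal diverges is in what you say Corollary~\ref{cor:balanced} \emph{does} and how it is proved. The corollary does \emph{not} construct an explicit lift to a rank-$\le 2k$ antisymmetric matrix over $\mathbb{C}\{\{t\}\}$, and the paper never invokes the parametrization $AJA^{t}$ you sketch. Instead, the paper proves a purely combinatorial statement (Lemma~\ref{lemma:balanced}): for generic $v\in\PVplus{k}{n}$ and every $U\in\binom{[n]}{2k+2}$, the maximum-weight matchings of $U$ split evenly into positive and negative ones. This is established from the accordion property of $k$-triangulations (Lemma~\ref{lemma:accordion}) and the analysis of $T_U^{\max}$ in Section~\ref{sec:balanced}, which produces a sign-reversing involution on maximum-weight matchings. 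The balance implies that $\ini_v$ of every Pfaffian vanishes at $(1,\dots,1)$; since Pfaffians are a Gr\"obner basis at $v$ (Theorem~\ref{thm:groebner}), one gets $(1,\dots,1)\in V(\ini_v(I_k(n)))$, and Definition~\ref{defi:tropical} then gives both $v\in\trop(\Pf{k}{n})$ and $v\in\trop^+(\Pf{k}{n})$ immediately.

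Your alternative route---tuning entry valuations in an $AJA^{t}$ lift so that no cancellations occur and all leading coefficients are positive---is a natural thought, but you provide no mechanism for it, and guaranteeing this simultaneously for all $\binom{n}{2}$ entries is precisely the hard part. The paper's argument sidesteps lifting entirely by working with the initial ideal; this is both shorter and what actually makes the proof go through.
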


In part (2), by \emph{the face corresponding to} a certain graph $G\subset \bnn$ we mean the intersection of the facets of $\Grob_k(n)$ corresponding to $\bnn \setminus G$ in the description of Theorem~\ref{thm:groebner-intro}. That is, we consider $\Grob_k(n)$ as (a cone over) the simplex with vertex set $\bnn$, so that every simplicial complex on $\bnn$ is a subcomplex of its face complex. Hence, Theorem~\ref{thm:V+} has the following interpretation:

\begin{corollary}
\label{cor:V+}
As a simplicial fan and modulo its lineality space, $\PVplus{k}{n}= \Grob_k(n)\cap \trop(\Pf{k}{n})$ is isomorphic to (the cone over) the 
extended multiassociahedron $\Ass{k}{n}$.
\end{corollary}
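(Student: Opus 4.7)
The plan is to read the statement off Theorem~\ref{thm:V+} once the simplicial-cone structure of $\Grob_k(n)$ supplied by Theorem~\ref{thm:groebner-intro} is in place. First I will set up the combinatorial picture: by Theorem~\ref{thm:groebner-intro}(1), modulo its $n$-dimensional lineality space, $\Grob_k(n)$ is a simplicial cone whose facets are labelled by the edges of $\bnn$ of length $\ge 2$. Under the convention introduced in the paragraph just before the corollary, each subset $G\subseteq \bnn$ determines a face of $\Grob_k(n)$, namely the intersection of the facets labelled by $\bnn\setminus G$. With this assignment, the face poset of any abstract simplicial complex on vertex set $\bnn$ sits as a subposet of the face poset of $\Grob_k(n)$.

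Second, I will invoke Theorem~\ref{thm:V+}. Part (1) yields the equality $\PVplus{k}{n}=\Grob_k(n)\cap\trop(\Pf{k}{n})$. Part (2) describes this same set as the union of the faces of $\Grob_k(n)$ indexed, in the sense above, by the $(k+1)$-free subsets $G\subseteq\bnn$. Since the $(k+1)$-free subsets of $\bnn$ are by definition the faces of the extended multiassociahedron $\Ass{k}{n}$, this union of faces equals the cone over $\Ass{k}{n}$ sitting inside the cone over the full simplex on $\bnn$; modulo lineality, this is exactly the asserted simplicial-fan isomorphism.

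I do not foresee any real obstacle: the substantive content, namely the characterization of $\Grob_k(n)$ as a simplicial cone and the face-by-face description of $\PVplus{k}{n}$, is already captured by Theorems~\ref{thm:groebner-intro} and~\ref{thm:V+}. The only thing requiring a line of justification is that the subcomplex of $\Grob_k(n)$ cut out by the $(k+1)$-free faces is precisely the face complex of $\Ass{k}{n}$ (and not some coarsening or refinement); this is a direct consequence of unwinding the definition of $\Ass{k}{n}$ and of the phrase ``face corresponding to $G$'' recorded before the statement, together with the simpliciality of $\Grob_k(n)$ which guarantees that distinct $(k+1)$-free $G$ do give faces with the correct incidence relations.
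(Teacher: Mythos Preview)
Your proposal is correct and follows exactly the paper's approach: the paper does not write a separate proof for this corollary, but simply presents it as the immediate ``interpretation'' of Theorem~\ref{thm:V+}(2) once the simplicial structure of $\Grob_k(n)$ from Theorem~\ref{thm:groebner-intro} and the face-labelling convention explained just before the statement are in place. Your write-up spells out that interpretation in a bit more detail, but there is no difference in substance.
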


\begin{remark}
$\PVplus{k}{n}$ is \emph{not} equal to $\trop^+(\Pf{k}{n})$.
Put differently, ``four point positivity'' implies but is \emph{not the same} as positivity in the sense of Definition~\ref{defi:tropical}. See Example~\ref{example:positive}.
\end{remark}

Theorem~\ref{thm:V+} suggests that one way to realize the multiassociahedron as a polytope would be to find a projection $\R^{\binom{[n]}2} \to \R^{k(2n-2k-1)}$ that is injective on $\PVplus{k}{n}$. This would embed $\Ass{k}{n}$ as a full-dimensional simplicial fan in $\R^{k(2n-2k-1)}$ whose link at the irrelevant face would necessarily realize the  multiassociahedron $\OvAss{k}{n}$ as a complete fan in $\R^{k(n-2k-1)}$. A second step is needed in order to show polytopality: to find appropriate right-hand sides showing that the complete fan is polytopal.

We have achieved both steps for $k=1$. We show that, for any seed triangulation $T$, the projection $\R^{\binom{[n]}2} \to \R^{2n-3}$ that keeps only the coordinates  corresponding to edges in $T$ is injective on $\PVplus{1}{n}$ (Corollary \ref{coro:fan})
and we have found explicit right-hand sides that show the image to be polytopal (Proposition \ref{prop:poly}).
This gives us the following nice embeddings of the associahedron. 

\begin{theorem}[Propostion~\ref{prop:poly}]
\label{thm:catalan-many}
For each seed triangulation $T$ of the $n$-gon, projection of $\PVplus{1}{n}$ to the $n-3$ coordinates of the diagonals in $T$ gives a realization of the $(n-3)$-associahedron in $\R^{n-3}$ with the following properties:
\begin{enumerate}
    \item It has $n-3$ pairs of parallel facets, each pair consisting of the facet corresponding to a diagonal $\{a,b\}$ of $T$ and its rotation $\{a+1,b+1\}$.
    \item Taking the normals to those $n-3$ pairs as a linear basis for $\R^{n-3}$, all other facet normals lie in $\{0,\pm1\}^{n-3}$.
\end{enumerate}
\end{theorem}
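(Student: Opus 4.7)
The plan is to combine Corollary~\ref{coro:fan} (injectivity of the projection) with Theorem~\ref{thm:V+} (identification of $\PVplus{1}{n}$ as a simplicial fan supported on $\Ass{1}{n}$). Let $\pi_T\colon\R^{\bnn}\to\R^{n-3}$ denote the projection onto the $T$-diagonal coordinates. Injectivity of $\pi_T$ on $\PVplus{1}{n}$, together with a dimension count (the maximal cones of $\PVplus{1}{n}$ have dimension $2n-3$, with the $n$-dimensional lineality and the $n$-dimensional irrelevant face cutting this down to exactly $n-3$), forces $\pi_T(\PVplus{1}{n})$ to be a complete simplicial fan in $\R^{n-3}$ whose combinatorics is that of $\OvAss{1}{n}$.

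The next step is to identify the rays of the image fan. By Theorem~\ref{thm:V+}(2) together with Theorem~\ref{thm:groebner-intro}, each relevant diagonal $d$ determines a ray $\rho_d$ of $\PVplus{1}{n}$; the rays $\{\rho_d:d\in T\}$ span the maximal cone $F_T$ modulo lineality and irrelevant face, so by injectivity their images $\{\pi_T(\rho_d):d\in T\}$ form a basis of $\R^{n-3}$, which we may assume (after a harmless change of basis in the target) to be the standard basis. For $e\notin T$, one would express $\rho_e$ modulo the lineality of $\Sep_n=\Grob_1(n)$ as an integer combination of $\{\rho_d:d\in T\}$, using the explicit rays of Theorem~\ref{thm:groebner-intro} and the relations~\eqref{eq:sep}; a direct calculation yields coefficients in $\{0,\pm 1\}$, governed by a combinatorial rule on how $e$ crosses the diagonals of $T$. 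This proves property~(2) and identifies the projected fan with the type $A_{n-3}$ $\g$-vector fan in the seed $T$. Property~(1) is the special case $e=\{a+1,b+1\}$, where the rule evaluates to coefficient $-1$ on $\{a,b\}$ and zero elsewhere, giving $\pi_T(\rho_{\{a+1,b+1\}})=-e_{\{a,b\}}$; this reduces to a direct identity among the rays of $\Sep_n$ and the lineality vectors $\mathbf{1}_a,\mathbf{1}_{a+1},\mathbf{1}_b,\mathbf{1}_{b+1}$.

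For polytopality -- the main technical step -- I would fix a strictly fp-positive weight $v\in\operatorname{int}(\Sep_n)$, for instance $v_{i,j}=(j-i)(n-j+i)$, which is strictly concave in the cyclic distance and therefore renders each inequality~\eqref{eq:sep} strict. The candidate polytope $P\subset\R^{n-3}$ is defined by $\langle n_F,x\rangle\le v(\rho_F)$ at each facet $F$, where $\rho_F$ is the opposite ray. The principal task is to verify that the normal fan of $P$ equals the projected fan: this reduces to a flip-local check, namely that for each pair of adjacent triangulations differing by a diagonal flip the wall-crossing inequality is strict -- which follows from the strict form of~\eqref{eq:sep} applied to $v$ together with the $\{0,\pm 1\}$ ray description from the previous step. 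Going through the flips is the bulk of the work, and once it is done one obtains the explicit polytope realizing the associahedron with the claimed $n-3$ pairs of parallel facets and integer facet normals, linearly isomorphic to the Hohlweg--Pilaud--Stella $\g$-vector polytope.
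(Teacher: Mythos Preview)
Your outline is correct and follows essentially the same route as the paper: completeness of the projected fan via Corollary~\ref{coro:fan}, identification of the rays with the $\g$-vectors (this is Theorem~\ref{thm:g-vector}, where the ``direct calculation'' you defer is carried out via the accordion/zigzag description of how a diagonal crosses $T$), and polytopality via the very same weight $v_{i,j}=(j-i)(n-j+i)$ together with a flip-local circuit check (Proposition~\ref{prop:poly}). The only substantive work you leave implicit is the $\{0,\pm1\}$ computation and the explicit reduction of each flip inequality to $2(d-c)(b-a)>0$ or $2(c-b)(n+a-d)>0$, both of which the paper spells out.
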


An analysis of the rays generating  these fans reveals that they coincide with the $\g$-vector fans of the cluster algebra of type $A$, which were shown to be polytopal by Hohlweg, Pilaud and Stella in \cite{HPS}. That is, our Theorem~\ref{thm:catalan-many} recovers polytopal realizations of the $\g$-vector fans in type $A$.

\subsubsection*{Acknowledgement:} We want to thank Michael Joswig, Jean-Philippe Labb\'e, Vincent Pilaud and Benjamin Schr\"oter for useful comments on an earlier version of this paper and for pointing us to missing relevant literature.

\section{The variety of antisymmetric matrices of bounded rank}
\label{sec:Sn}

\subsection{Matchings and the Pfaffian of an antisymmetric matrix}
\label{sec:pfaffians}


The complete graph on a set of vertices $U\subset [n]$ of size $2k$ it has $(2k-1)!!$ matchings (by which we always mean a \emph{perfect} matching),  one of which is the unique $k$-crossing with vertex set $U$.
%
The \emph{parity} of a matching $E$ is the parity of the number of pairwise crossings among the edges in $E$. This parity coincides with the parity as a permutation, when the pairs of  matched vertices are written one after another, in increasing order within each pair.
%
By \emph{swapping} two pairs $\{a,b\}$ and $\{c,d\}$ in a matching $E$ we mean removing them and inserting one of the other two matchings of $\{a,b,c,d\}$ instead. Observe that one of the three matchings of $\{a,b,c,d\}$ has a crossing (that is, it is odd) and the other two are crossing-free (hence even).

\begin{lemma}
\label{lema:swap}
A swap changes parity if and only if one of the two pairs of edges in the swap (the pair removed or the pair inserted) is a crossing.
\end{lemma}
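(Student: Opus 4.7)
The plan is to separate the crossing count of the matching into contributions that are insensitive to the swap (modulo $2$) and a single contribution that is entirely local to $\{a,b,c,d\}$. Write $E' = (E\setminus\{P_1,P_2\})\cup\{P_1',P_2'\}$, where $P_1=\{a,b\}$, $P_2=\{c,d\}$ and $\{P_1',P_2'\}$ is one of the other two matchings on $\{a,b,c,d\}$. Let $c(F)$ denote the number of crossings inside a set of edges $F$, and for disjoint sets of edges $A,B$ let $X(A,B)$ be the number of crossing pairs $(e,f)$ with $e\in A$, $f\in B$. Splitting the crossings of $E$ into those inside $E\setminus\{P_1,P_2\}$, those inside $\{P_1,P_2\}$ and those across the two groups, one gets
\[
c(E')-c(E) = \bigl(c(\{P_1',P_2'\}) - c(\{P_1,P_2\})\bigr) + \bigl(X(E\setminus\{P_1,P_2\},\{P_1',P_2'\}) - X(E\setminus\{P_1,P_2\},\{P_1,P_2\})\bigr).
\]

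For the second term I would show that for any single edge $e$ with both endpoints outside $\{a,b,c,d\}$, the number of crossings of $e$ with a perfect matching of $\{a,b,c,d\}$ has the same parity for each of the three possible matchings. This reduces to a short case analysis according to how $e$ partitions $\{a,b,c,d\}$ into the two arcs it determines on the circle: a $0$-$4$ split gives $0$ crossings for any matching; a $1$-$3$ split forces exactly $1$ crossing (the edge of the matching containing the isolated vertex); and a $2$-$2$ split yields $0$ or $2$ crossings in each case. In all three cases the parity is independent of the matching, so summing over all edges $e\in E\setminus\{P_1,P_2\}$ makes the second term even.

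Therefore $c(E')-c(E) \equiv c(\{P_1',P_2'\}) - c(\{P_1,P_2\}) \pmod 2$, and this last quantity is $0$ if neither of $\{P_1,P_2\}$ and $\{P_1',P_2'\}$ is a crossing, and $\pm 1$ if exactly one of them is. Since among the three perfect matchings of four cyclically ordered points exactly one is a crossing, the case ``both are crossings'' does not occur, so ``one of the two pairs in the swap is a crossing'' is equivalent to ``exactly one is'', and the lemma follows.

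The only delicate step is the parity-invariance claim for external edges, but this is purely a four-point geometric check with three subcases, so I do not expect real difficulty; the rest is bookkeeping.
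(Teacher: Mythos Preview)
Your proof is correct and follows essentially the same approach as the paper: decompose the change in crossing number into a local contribution on $\{a,b,c,d\}$ and an external one, and show the external contribution is even. The only difference is cosmetic: where you do a three-case analysis on how an external edge splits $\{a,b,c,d\}$, the paper observes in one line that any external edge crosses the closed $4$-cycle $abcda$ (which is exactly the union $\{P_1,P_2\}\cup\{P_1',P_2'\}$) an even number of times, which immediately gives the parity invariance.
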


\begin{proof}
Let $\{a,b\}$ and $\{c,d\}$ be the initial pairs and $\{a,d\}$ and $\{b,c\}$ the new pairs. Any 
other edge from the matching crosses the cycle $abcda$ an even number of times. Hence, the only change in the number of crossings comes from whether the edges in the swap cross.
\end{proof}

%
%

Recall that an antisymmetric matrix of odd size $n$ has zero determinant because
\[
\det(M) = \det(M^t) = \det(-M) = (-1)^n \det(M).
\]
For even size there is the following classical result:

\begin{theorem}[Cayley 1852~\cite{Cayley}] \label{pfaffian}
	Let $M$ be a size $2k$ antisymmetric matrix. Then
	\begin{equation}\label{det}
	\det M=\left(\sum_{E\text{ matching}}s(E)\prod_{(i,j)\in E,i<j}m_{ij}\right)^2
	\end{equation}
	where the sum is taken over the matchings of $[2k]$ and $s(E) = \pm1$ according to the parity of $E$.
\end{theorem}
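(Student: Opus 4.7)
The plan is to prove the identity bijectively by expanding both sides as polynomials in the entries $m_{ij}$ and matching terms. Writing $\mathrm{Pf}(M) := \sum_{E} s(E) \prod_{(i,j)\in E, i<j} m_{ij}$, I would expand the square as
\[
\mathrm{Pf}(M)^2 = \sum_{(E_1,E_2)} s(E_1)\,s(E_2) \prod_{\{i,j\}\in E_1}\!\! m_{ij} \prod_{\{i,j\}\in E_2}\!\! m_{ij},
\]
indexed by ordered pairs of matchings of $[2k]$. The key combinatorial observation is that $E_1 \sqcup E_2$ is a multigraph in which every vertex has degree exactly $2$, hence decomposes uniquely into cycles: edges of $E_1 \cap E_2$ appear doubled (as ``$2$-cycles''), and all remaining components are $E_1/E_2$-alternating cycles of even length $\ge 4$.

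On the other side, I would expand the Leibniz formula $\det(M) = \sum_\sigma \mathrm{sgn}(\sigma) \prod_i m_{i,\sigma(i)}$. Since $m_{ii}=0$, only derangements contribute. I then claim that permutations having any cycle of odd length $\ge 3$ contribute zero: given such a $\sigma$, pair it with the permutation $\sigma'$ obtained by reversing its lexicographically smallest odd cycle. A cycle and its inverse have the same sign, so $\mathrm{sgn}(\sigma)=\mathrm{sgn}(\sigma')$, but the product of matrix entries along that cycle switches sign an odd number of times by antisymmetry $m_{ab}=-m_{ba}$, so the two contributions cancel. Hence only permutations whose cycles all have even length survive.

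Next I would set up the correspondence between such permutations and pairs of matchings. To each $\sigma$ with all cycles even, associate the pair $(E_1(\sigma),E_2(\sigma))$ by taking, along each cycle (oriented from its smallest vertex), the odd-indexed edges for $E_1$ and the even-indexed ones for $E_2$; conversely, every ordered pair $(E_1,E_2)$ arises from exactly $2^r$ such permutations, where $r$ is the number of alternating cycles of length $\ge 4$ in $E_1\sqcup E_2$ (since each such cycle can be started at either of its two alternating ``phases''). Summing $\mathrm{sgn}(\sigma)\prod m_{i,\sigma(i)}$ over the $2^r$ preimages of a fixed $(E_1,E_2)$ should collapse to $s(E_1)s(E_2)\prod_{E_1}m_{ij}\prod_{E_2}m_{ij}$, up to possibly swapping $m_{ab}\leftrightarrow -m_{ba}$ consistently.

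The main obstacle, as expected, is the sign bookkeeping in the last step. This is exactly where Lemma~\ref{lema:swap} enters: to compare $\mathrm{sgn}(\sigma)$ with $s(E_1)s(E_2)$, I would fix a reference pair (e.g.\ the non-crossing matching $\{1,2\},\{3,4\},\ldots,\{2k-1,2k\}$ together with itself, where the identity holds trivially) and transport any pair $(E_1,E_2)$ to it by a sequence of swaps within $E_1$ and within $E_2$; Lemma~\ref{lema:swap} tells us that $s(E_i)$ changes precisely when the swap involves a crossing, and a parallel count shows that the product-sign and $\mathrm{sgn}(\sigma)$ change in exactly the same instances, because each such swap alters the corresponding alternating cycle structure of $E_1\sqcup E_2$ in a controlled way. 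The delicate part is verifying that the $2^r$-fold overcount is consistent with the sign matching, which I would do by induction on the number of alternating cycles of length $\ge 4$.
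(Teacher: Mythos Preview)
The paper does not prove this statement; it is quoted as a classical result due to Cayley (1852) and left without proof.

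Your outline is essentially the standard combinatorial proof and is sound in strategy. One point to clean up: under the rule you state (orient each cycle from its smallest vertex, put odd-indexed edges in $E_1$ and even-indexed in $E_2$), the map $\sigma \mapsto (E_1(\sigma),E_2(\sigma))$ is already a \emph{bijection} between permutations with all cycles even and ordered pairs of matchings, not a $2^r$-to-$1$ map. Reversing a long cycle of $\sigma$ swaps which parity of edges lands in $E_1$, so the image is a \emph{different} ordered pair. The $2^r$ you see is symmetric: there are $2^r$ permutations sharing a given underlying $2$-regular multigraph and also $2^r$ ordered pairs $(E_1,E_2)$ decomposing that same multigraph, and your rule matches them one to one. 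Once this is straightened out there is no overcount to resolve by induction, and the remaining task is the single-permutation sign identity
\[
\mathrm{sgn}(\sigma)\prod_i m_{i,\sigma(i)} \;=\; s(E_1)\,s(E_2)\prod_{\{i,j\}\in E_1} m_{ij}\prod_{\{i,j\}\in E_2} m_{ij},
\]
which can be checked cycle by cycle directly from the definition of $s(E)$ as the crossing parity. Lemma~\ref{lema:swap} is one legitimate way to organize that parity bookkeeping, though not the only one.
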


The expression inside the parenthesis in this theorem, that is, the square root of the determinant of an antisymmetric matrix, is called the \emph{Pfaffian} of $M$.

%

\subsection{Four-point positive weight vectors}

Let $I_k(n)\subset\K[x_{i,j}, \{i,j\}\in \bnn]$ be the ideal generated by all Pfaffians of degree $k$ of an antisymmetric matrix of size $n\times n$ (with indeterminate coefficients) and let $\Pf{k}{n}$ be the corresponding algebraic variety, whose points are the antisymmetric matrices of rank at most $2k$.

We now introduce certain term orders for the variables that produce as initial ideal of $I_k(n)$ the monomial ideal generated by $(k+1)$-crossings. For this, we need to introduce a change of basis in $\R^{\binom{[n]}2}$, and a change of point of view on the $n$-gon.

Let us call $a$-th side of the $n$-gon the edge $\{a-1,a\}$ (with indices taken modulo $n$). Then, any choice of real numbers $w_{i,j}$ (with $\{i,j\}\in \binom{[n]}2$) for the edges connecting \emph{vertices} of the $n$-gon induces a ``separation'' distance between each pair of \emph{sides}, as the sum of $w$'s of the edges separating those sides. 
That is:

\begin{definition}
\label{defi:d(w)}
	Given a vector $w\in \R^{\binom{[n]}{2}}$,  
	the \emph{separation vector} $d(w)\in \R^{\binom{[n]}{2}}$ induced by $w$ is defined as \begin{equation}
	\label{eq:w_to_d}
	d_{a,b}(w)=\sum_{\substack{(i,j)\in \binom{[n]}2\\
	a\le i< b\le j< a}}w_{ij},
	\qquad  \forall \{a,b\}\in \binom{[n]}2,
	\end{equation}
    Here the order symbols ``$<$'' and ``$\le$'' for indices are considered cyclically. E.g., $a<b<c<a$ means that $a,b,c$ are different and they appear in that cyclic order along the $n$-gon.
\end{definition}

Figure~\ref{fig:separation} shows an example of this transformation. To compute $d_{26}(w)$, where $a=2$ and $b=6$ denote two sides of the octagon, we have to sum the $w_{ij}$s in the complete bipartite graph on the two subsets of vertices separated by $a$ and $b$. 

\begin{figure}[htb]
	\begin{tikzpicture}[inner sep=0.2ex]
	\node[label=0:0] (p0) at (0:1.8cm) {$\bullet$};
	\node[label=45:1] (p1) at (45:1.8cm) {$\bullet$};
	\node[label=90:2] (p2) at (90:1.8cm) {$\bullet$};
	\node[label=135:3] (p3) at (135:1.8cm) {$\bullet$};
	\node[label=180:4] (p4) at (180:1.8cm) {$\bullet$};
	\node[label=225:5] (p5) at (225:1.8cm) {$\bullet$};
	\node[label=270:6] (p6) at (270:1.8cm) {$\bullet$};
	\node[label=315:7] (p7) at (315:1.8cm) {$\bullet$};
	\draw (p0.center)--(p4.center) (p0.center)--(p3.center)--(p1.center) (p0.center)--(p5.center)--(p7.center);
		\draw (p0.center)--(p2.center)--(p1.center) (p4.center)--(p1.center)--(p5.center) (p4.center)--(p7.center)--(p3.center)
		(p7.center)--(p2.center)--(p6.center)--(p5.center)
		(p3.center)--(p6.center)--(p4.center);

	\draw[color=red] (70:1.8cm) node[label=70:{$a=2$}] {$\bullet$} -- (250:1.8cm) node[label=250:{$b=6$}] {$\bullet$};
	\end{tikzpicture}
\caption{}
\label{fig:separation}
\end{figure}

The entries of $d(w)$ are going to be used as weights for variables in our monomial orders, but we want to have in mind the weight vector $w$ from which they come. This is well-defined thanks to the following result, which implies that the transformation from $w$ to $d(w)$ is a linear isomorphism in $\R^\bnn$:

\begin{proposition}
\label{prop:d_to_w}
    For any $w\in \R^{\binom{[n]}2}$, and every $\{a,b\}\in \binom{[n]}2$, we have
    \begin{equation}
	\label{eq:d_to_w}
    2w_{a,b}=
    d_{a,b}(w)+d_{a+1,b+1}(w)-d_{a,b+1}(w)-d_{a+1,b}(w),
	\end{equation}
    where $d_{a,a}(w)=0$ by convention.  
    
    Hence, each $v\in \R^{\binom{[n]}2}$ can be expressed uniquely as $d(w)$ for a certain $w\in \R^{\binom{[n]}2}$.
\end{proposition}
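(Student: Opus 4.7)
The plan is to verify identity~\eqref{eq:d_to_w} entry-by-entry in the standard basis of $\R^{\bnn}$, and then deduce bijectivity by a dimension count. By definition,
\[
d_{a,b}(w) \;=\; \sum_{\{i,j\}\in \bnn} \sigma_{ij}^{a,b}\,w_{ij},
\]
where $\sigma_{ij}^{a,b}\in\{0,1\}$ records whether the chord $\{i,j\}$ separates sides $a$ and $b$ of the $n$-gon, i.e.\ whether its endpoints lie on opposite arcs cut out by $\{a-1,a\}$ and $\{b-1,b\}$. Plugging this into the right-hand side of \eqref{eq:d_to_w} reduces the problem to showing that the integer
\[
c^{a,b}_{ij} \;:=\; \sigma_{ij}^{a,b} + \sigma_{ij}^{a+1,b+1} - \sigma_{ij}^{a,b+1} - \sigma_{ij}^{a+1,b}
\]
equals $2$ when $\{i,j\}=\{a,b\}$ and $0$ otherwise.

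I would prove this by case analysis on where the endpoints of $\{i,j\}$ sit relative to the four reference sides $a, a+1, b, b+1$. Split $[n]$ into the two ``long'' open arcs $A=\{a+2,\dots,b-1\}$ and $B=\{b+2,\dots,a-1\}$, together with the four ``short'' vertices $\{a,a+1,b,b+1\}$. The key observation is that every vertex of $A$ lies in \emph{all four} of $[a,b-1]$, $[a+1,b]$, $[a,b]$, $[a+1,b-1]$, and every vertex of $B$ lies in all four of $[b,a-1]$, $[b+1,a]$, $[b+1,a-1]$, $[b,a]$. Consequently, replacing an endpoint by any other vertex of the same long arc does not alter any of the four $\sigma$'s, and one checks quickly that every edge with at least one endpoint in $A\cup B$ produces $c^{a,b}_{ij}=0$ (the four indicators either vanish identically, are all $1$, or balance out as $1+1-1-1$ or $0+1-0-1$ depending on where the second endpoint lies). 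A final direct inspection of the six edges with both endpoints in $\{a,a+1,b,b+1\}$ shows that only $\{a,b\}$ survives: it separates $(a,b)$ and $(a+1,b+1)$ but neither of the mixed pairs $(a,b+1)$ or $(a+1,b)$, yielding $c^{a,b}_{ab}=1+1-0-0=2$.

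The degenerate case $b=a+1$, in which $\{a,b\}$ is itself a side of the $n$-gon and $A$ is empty, is covered by the same analysis together with the stated convention $d_{a,a}(w)=0$. This completes the proof of \eqref{eq:d_to_w}. The second assertion then follows immediately: \eqref{eq:d_to_w} exhibits an explicit left inverse of the linear endomorphism $w\mapsto d(w)$ of $\R^{\bnn}$, so this endomorphism is injective, and since $\R^{\bnn}$ is finite-dimensional it is bijective. Thus every $v\in\R^{\bnn}$ equals $d(w)$ for the unique $w$ given by $w_{a,b} = \tfrac12\bigl(v_{a,b}+v_{a+1,b+1}-v_{a,b+1}-v_{a+1,b}\bigr)$.

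The main obstacle is organizational rather than conceptual: one must enumerate the subcases carefully with cyclic index conventions and handle the convention $d_{a,a}=0$ uniformly without duplication. Once the bookkeeping is in place, the identity is essentially the observation that the ``discrete second difference'' $d_{a,b}+d_{a+1,b+1}-d_{a,b+1}-d_{a+1,b}$ of the cut functional localizes at the single chord $\{a,b\}$.
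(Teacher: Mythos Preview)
Your proof is correct and follows exactly the approach the paper sketches in one line: verify that in the linear combination $d_{a,b}+d_{a+1,b+1}-d_{a,b+1}-d_{a+1,b}$ every $w_{ij}$ with $\{i,j\}\neq\{a,b\}$ cancels, leaving $2w_{a,b}$. You have simply carried out that cancellation explicitly (with a slightly larger set of ``special'' vertices than strictly needed---only $a$ and $b$ change arc, so $a{+}1$ and $b{+}1$ could have been absorbed into $A$ and $B$---but this does not affect correctness).
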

\begin{proof}
	It is enough to check that the rest of $w_{ij}$ cancel out when $d_{a,b}(w)+d_{a+1,b+1}(w)-d_{a,b+1}(w)-d_{a+1,b}(w)$ is computed via Eq.\eqref{eq:w_to_d}.
\end{proof}

That is, we can think of $d(w)$ and $w$ as different choices of linear coordinates for $\R^\bnn$.

\begin{proposition}
\label{prop:fpp}
Let $v\in \R^\bnn$ be a weight vector. The following conditions are equivalent:
\begin{enumerate}
    \item $v\in \Sep_n$. That is, it satisfies the positive four-point conditions \eqref{eq:fpp} in Definition~\ref{defi:fpp}.
    \item $v$ satisfies the $\binom{n}{2}-n$ inequalities  \eqref{eq:sep}.
    \item $v=d(w)$ in the sense of Definition \ref{defi:d(w)} for a $w$ with $w_{a,b}\ge 0$ for all $\{a,b\}\in \bnn$ with $|a-b|>1$.
    \item For every $k\ge 1$ and every $U\in \binom{[n]}{2k}$ the weights given by $v$ to matchings in $U$ are monotone with respect to swaps that create crossings.
    \item For every $k\ge 1$ and every $U\in \binom{[n]}{2k}$ the maximum weight given by $v$ to matchings in $U$ is attained by the $k$-crossing.
\end{enumerate}
\end{proposition}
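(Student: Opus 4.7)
The plan is to close two cycles of implications: the ``linear'' equivalence (1)$\Rightarrow$(2)$\Rightarrow$(3)$\Rightarrow$(1) and the ``combinatorial'' equivalence (1)$\Rightarrow$(4)$\Rightarrow$(5)$\Rightarrow$(1), using throughout the coordinate change $v\leftrightarrow w$ provided by Proposition~\ref{prop:d_to_w}.

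For the first cycle, (1)$\Rightarrow$(2) is immediate because \eqref{eq:sep} is the specialization of \eqref{eq:fpp} with $a'=a+1$ and $b'=b+1$. For (2)$\Rightarrow$(3), the identity in Proposition~\ref{prop:d_to_w} recognizes the left-hand side of each inequality in \eqref{eq:sep} as $2w_{a,b}$, so (2) reads exactly as $w_{a,b}\ge 0$ for every $\{a,b\}$ with $|a-b|>1$. For (3)$\Rightarrow$(1), I would substitute $v=d(w)$ into $v_{a,b}+v_{a',b'}-v_{a,a'}-v_{b,b'}$ and collect the coefficient of each $w_{i,j}$, splitting into cases according to which of the four arcs $[a,a')$, $[a',b)$, $[b,b')$, $[b',a)$ contain the endpoints $i$ and $j$. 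A short case analysis shows that only edges with one endpoint in $[a',b)$ and the other in $[b',a)$ survive, each with coefficient $+2$; such edges are automatically non-adjacent, so their $w$-values are nonnegative by (3). The second inequality in \eqref{eq:fpp} is handled by the symmetric case analysis.

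For the combinatorial cycle, (1)$\Rightarrow$(4) is direct: a swap that creates a crossing among the four vertices of a matching changes the total weight of the matching by exactly the quantity that \eqref{eq:fpp} requires to be nonnegative. For (5)$\Rightarrow$(1), the specialization $k=2$, $U=\{a,a',b,b'\}$ suffices: the $2$-crossing among $U$ is $\{ab,a'b'\}$, and the two inequalities of \eqref{eq:fpp} are precisely the statements that this matching has weight at least as large as each of the other two matchings of $U$.

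The main obstacle is (4)$\Rightarrow$(5). The plan is to induct on $c(E)$, the total number of crossings of a matching $E$ of $U$, showing that any $E$ other than the $k$-crossing admits a crossing-creating swap producing some $E'$ with $c(E')>c(E)$. Given any non-crossing pair $\{\alpha\beta,\gamma\delta\}$ in $E$ with cyclic order $\alpha<\beta<\gamma<\delta$, I would swap it to $\{\alpha\gamma,\beta\delta\}$; this swap creates a crossing in the swapped pair (contributing $+1$ to $c$), and an arc-based analysis of how each remaining edge of $E$ interacts with the four arcs cut by $\alpha,\beta,\gamma,\delta$ shows that no edge can contribute negatively (in fact all cases give $0$ except edges with one endpoint in $(\beta,\gamma)$ and the other in $(\delta,\alpha)$, which contribute $+2$ each). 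Iterating such swaps and invoking (4) at every step yields $v(E)\le v(k\text{-crossing})$, which is (5). This arc case-analysis, while entirely elementary, is the only genuinely delicate part of the argument.
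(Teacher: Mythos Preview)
Your proposal is correct and follows essentially the same route as the paper. The paper also proves $1\Leftrightarrow 4$ directly, $2\Leftrightarrow 3$ via Proposition~\ref{prop:d_to_w}, $5\Rightarrow 1\Rightarrow 2$ trivially, $3\Rightarrow 4$ (equivalently your $3\Rightarrow 1$) by the same arc case-analysis expressing the four-point difference as a nonnegative combination of $w$'s, and $4\Rightarrow 5$ by noting that any matching can be turned into the full crossing by crossing-creating swaps. Your treatment of $4\Rightarrow 5$ is actually more detailed than the paper's: the paper simply asserts that such a sequence of swaps exists, whereas you verify via the arc analysis that swapping a non-crossing pair $\{\alpha\beta,\gamma\delta\}$ to $\{\alpha\gamma,\beta\delta\}$ strictly increases the total crossing count (so the process terminates at the $k$-crossing).
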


\begin{proof}
    The equivalence of parts 1 and 4 is obvious and the equivalence of 2 and 3 follows from Proposition~\ref{prop:d_to_w}. The implications $5 \Rightarrow 1 \Rightarrow 2$ are also trivial because the inequalities in condition 1 are nothing but the case $k=2$ of condition 5, and they contain the inequalities in condition 2 as a subset.
    
    The implication $4  \Rightarrow 5$ follows from the fact that every matching can monotonically be converted into a full crossing by swaps that create crossings. 
    
    Finally, the implication $3 \Rightarrow 4$ follows from the fact that if
    $1\le a < a' < b < b' \le n$ then Equations~\eqref{eq:w_to_d} give
    \begin{align*}
    v_{a,b} + v_{a',b'} &= W_1+W_2+W_3,
    \\
    v_{a,a'} + v_{b,b'} &=  W_1+W_2,
    \\
    v_{a,b'} + v_{a',b} &=  W_1+W_3,
    \end{align*}
    where
    \begin{align*}
    W_1 &= 
    \sum_{\substack{	a\le i< a'\le j< b}}w_{ij} +
    \sum_{\substack{	a'\le i< b\le j< b'}}w_{ij} +
    \sum_{\substack{	b\le i< b'\le j< a}}w_{ij} +
    \sum_{\substack{	b'\le i< a\le j< a'}}w_{ij},
    \\
    W_2 &= 
    \sum_{\substack{	a\le i< a',\\ b\le j< b'}}w_{ij},
    \\
    W_3 &= 
    \sum_{\substack{	a'\le i< b,\\  b'\le j< a}}w_{ij}.
    \end{align*}
   Since $w$ is nonnegative (except perhaps for consecutive indices) we have that $W_2, W_3\ge 0$ and hence  $v_{a,b} + v_{a',b'}$ is greater or equal than both of $v_{a,a'} + v_{b,b'}$ and $
    v_{a,b'} + v_{a',b}$.
\end{proof}

That is to say, $\Sep_n$ is essentially the positive orthant in the $w$ coordinates, except for one detail.
Proposition~\ref{prop:d_to_w} implies that the  inequalities  \eqref{eq:sep} from the introduction are equivalent to
\[
w_{a,b}\ge 0 \qquad \forall \{a,b\}\in \bnn \text{ with $|a-b| > 1$};
\]
but the inequalities $w_{a,a+1}\ge 0$ are not valid in $\Sep_n$. The $n$-dimensional subspace generated by the vectors with $w_{a,b}= 0$ if $|a-b|>1$ and $w_{a,a+1}$ arbitrary can be thought of as the ``irrelevant'' part of the $w$ coordinates; in fact, is the lineality space of $\Sep_n$. 
This suggests we give it a name. We denote:

\begin{align*}
L_n&:= \left\{d(w): w\in \R^{\bnn} \text{ and $w_{i,j}=0$ if $|i-j|>1$}\right\} \cong \R^n, \\
\Sepplus_n&:=  \left\{d(w): w\in \R_{\ge 0}^{\bnn} \right\} \cong \R_{\ge 0}^{\bnn}.
\end{align*}

\begin{corollary}
\label{coro:fpp}
    $\Sep_n = L_n + \Sepplus_n$, and it is linearly isomorphic to $\R^n \times \R_{\ge 0}^{\bnn-n}$.
\end{corollary}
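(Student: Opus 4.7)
The proof essentially just assembles pieces that are already in place: Proposition~\ref{prop:d_to_w} says the linear map $d\colon \R^\bnn \to \R^\bnn$ (sending $w$ to the separation vector $d(w)$) is a linear isomorphism, and Proposition~\ref{prop:fpp}(3) identifies $\Sep_n$ with the image under $d$ of the polyhedron
\[
P := \{w\in\R^\bnn : w_{a,b}\ge 0 \text{ for all } \{a,b\} \text{ with } |a-b|>1\}.
\]
The plan is to push the obvious product structure of $P$ through $d$.

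For the second claim, I would split coordinates according to whether $|a-b|=1$ (``sides'', $n$ of them) or $|a-b|>1$ (``diagonals'', $\bnn - n$ of them). Then $P$ is literally $\R^n \times \R_{\ge 0}^{\bnn-n}$ under this splitting, with the sides unconstrained and the diagonals nonnegative. Composing with the linear isomorphism $d$ gives the claimed linear isomorphism $\Sep_n \cong \R^n \times \R_{\ge 0}^{\bnn-n}$, and moreover sends the lineality subspace $\R^n \times \{0\}$ of $P$ precisely to $L_n$ (by definition of $L_n$) and the orthant $\{0\}\times \R^{\bnn-n}_{\ge 0}$ into $\Sepplus_n$.

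For the Minkowski decomposition $\Sep_n = L_n + \Sepplus_n$: the inclusion $L_n + \Sepplus_n \subseteq \Sep_n$ is immediate since both $L_n$ and $\Sepplus_n$ arise from $w$'s in $P$ (a $w$ supported on sides is trivially in $P$, and a nonnegative $w$ is in $P$ a fortiori), and $\Sep_n$ is closed under addition as a convex cone. For the reverse inclusion, given $v = d(w)\in \Sep_n$ with $w\in P$, write $w = w^{\mathrm{s}} + w^{\mathrm{d}}$, where $w^{\mathrm{s}}$ keeps only the side entries and $w^{\mathrm{d}}$ only the diagonal entries. Then $d(w^{\mathrm{s}})\in L_n$ by definition, while $w^{\mathrm{d}}$ is entrywise nonnegative (its diagonal entries inherit the inequality defining $P$ and its side entries are zero), so $d(w^{\mathrm{d}})\in \Sepplus_n$. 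Linearity of $d$ gives $v = d(w^{\mathrm{s}}) + d(w^{\mathrm{d}})$, as required.

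There is no real obstacle here; the only thing to keep straight is the bookkeeping between the two coordinate systems $w$ and $d(w)$, and the observation that the cone $P$ in the $w$-picture is a trivial product whose factors correspond exactly to $L_n$ and $\Sepplus_n$ after applying $d$.
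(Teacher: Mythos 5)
Your proof is correct and follows essentially the same route as the paper: identify $\Sep_n$ as the image under the linear isomorphism $d$ (Proposition~\ref{prop:d_to_w}) of the polyhedron $\{w : w_{a,b}\ge 0 \text{ for } |a-b|>1\}$ via Proposition~\ref{prop:fpp}(3), then read off the product structure in the $w$-coordinates. The paper states this in a single sentence; you have simply spelled out the Minkowski decomposition and the coordinate splitting explicitly, which is a fair amount of welcome detail but not a different argument.
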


\begin{proof}
    By Proposition \ref{prop:d_to_w} the map $w\to d(w)$ is a linear automorphism in $\R^\bnn$; by Proposition~\ref{prop:fpp}, $\Sep_n$ is the image $\Sepplus_n$ of the positive orthant plus the linear subspace $L_n$.
\end{proof}

\subsection{Pfaffians as a Gr\"obner basis for four-point positive weights}

\begin{lemma}
Let $v\in \Sep_n$ be an fp-positive vector and assume it sufficiently generic. Consider the term order in $\K[x_{a,b}: \{a,b\} \in \binom{[n]}2]$ obtained giving each variable weight $v_{a,b}$. Then:

\begin{enumerate}
    \item The leading term in each Pfaffian of degree $k+1$ is the  monomial corresponding to the $(k+1)$-crossing among the corresponding $2k+2$ points.
    \item The leading term of the $S$-polynomial of every two Pfaffians contains a $(k+1)$-crossing.
\end{enumerate}
\end{lemma}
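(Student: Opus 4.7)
\emph{Part (1)} is immediate from Proposition~\ref{prop:fpp}(5) together with Cayley's expansion. By Theorem~\ref{pfaffian}, $P_U = \sum_E s(E)\,m_E$ runs over matchings $E$ of $U$, and the $v$-weight of the monomial $m_E=\prod_{\{i,j\}\in E}x_{i,j}$ equals $\sum_{\{i,j\}\in E} v_{i,j}$, i.e.\ the $v$-weight of the underlying matching. Proposition~\ref{prop:fpp}(5) asserts that this weight is maximized by the $(k+1)$-crossing of $U$, and sufficient genericity of $v$ makes the maximizer unique, so the crossing monomial is the leading term.

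\emph{Part (2).} Set $L=\mathrm{lcm}(M_U,M_{U'})$ and write
\[
S(P_U,P_{U'}) \;=\; \frac{L}{M_U}P_U \;-\; \frac{L}{M_{U'}}P_{U'}.
\]
Observe first the easy case $M_U\cap M_{U'}=\emptyset$: then $L/M_U = M_{U'}$, so every monomial on the $U$-side of $S$ is already divisible by the $(k+1)$-crossing $M_{U'}$, and symmetrically on the other side. In the overlapping case $s=|M_U\cap M_{U'}|>0$, each surviving monomial has the form $(L/M_U)m_E$ for some matching $E\ne M_U$ of $U$ (or the analogous expression for $U'$), and its underlying multiset of edges on $[n]$ is $E\cup(M_{U'}\setminus M_U)$. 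The plan is a swap-monotonicity argument: I will show that if this multiset contains no $(k+1)$-crossing, then one can perform a crossing-creating swap strictly inside $E$ to produce a matching $E'$ whose monomial $(L/M_U)m_{E'}$ is strictly heavier in $v$-weight (by Proposition~\ref{prop:fpp}(4)) and whose Pfaffian sign is opposite to that of $E$ (by Lemma~\ref{lema:swap}), so the two monomials cannot cancel within $(L/M_U)P_U$. Iterating this swap process can only increase the weight, so the $v$-top monomial of $S$ admits no such internal swap, which I will use to conclude it contains a $(k+1)$-crossing.

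\textbf{Main obstacle.} The delicate point is controlling cancellations between the two sides of $S$: two monomials $(L/M_U)m_E$ and $(L/M_{U'})m_{E'}$ coincide precisely when the multiset equation $E\cup(M_{U'}\setminus M_U)=E'\cup(M_U\setminus M_{U'})$ holds, and one must verify that such coincidences never erase the heavier crossing-containing monomials produced by the swap argument. I anticipate a case split according to $s=|M_U\cap M_{U'}|$, together with the combinatorial lemma that a matching $E$ of $U$ admitting no crossing-creating swap relative to $M_{U'}\setminus M_U$ must itself have $E\cup(M_{U'}\setminus M_U)$ containing a $(k+1)$-crossing (and that the only full obstruction corresponds to $E=M_U$, i.e.\ the already-cancelled top). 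This combinatorial lemma, which replaces the elementary "swap-to-a-crossing" monotonicity of matchings on a single set by its analogue for matchings augmented by a fixed sub-crossing from $M_{U'}$, is where the bulk of the technical work should lie.
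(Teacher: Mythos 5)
Part (1) is correct and coincides with the paper: it is a direct consequence of Proposition~\ref{prop:fpp}(5), with genericity ensuring uniqueness of the maximizer.

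Part (2), however, is not a proof but a plan. You explicitly defer the central technical steps (``I will show\dots'', ``I anticipate\dots'', ``this combinatorial lemma\dots is where the bulk of the technical work should lie''), and those deferred steps are precisely the content of the lemma. Two things in particular are missing. First, the cancellation analysis between $\frac{L}{M_U}P_U$ and $\frac{L}{M_{U'}}P_{U'}$ is essential and nontrivial: your iterative swap argument could, if unchecked, climb all the way up to the monomial $L$ itself, which is exactly the term that cancels. Second, once you have located a putative top term, the claim that ``admitting no internal crossing-creating swap'' forces it to contain a $(k+1)$-crossing is itself a nontrivial combinatorial statement that you do not prove, and it is not simply the swap-monotonicity of Proposition~\ref{prop:fpp}(4): the monomial lives on the union of $U$ with the residual edges $M_{U'}\setminus M_U$, which is a different combinatorial object than a single $U$.

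The paper's route is more structured and is what actually makes the argument go through. The key observation — carried over from Part (1) — is that the second-highest monomial in a Pfaffian is always a \emph{single swap of two consecutive edges} of the $(k+1)$-crossing. This constrains the leading term of the $S$-polynomial to be (a single swap of $F_0$) $\times$ $(G_0\setminus F_0)$, or the symmetric thing; swaps purely among the shared edges $F_0\cap G_0$ are discarded because those cancel against the corresponding swaps on the $G$ side, and swaps touching a non-shared edge of $F_0$ are shown never to cancel. One then case-splits on the position of the single swap within $F_0$: a swap of two non-shared consecutive edges leaves $G_0$ intact, and a swap involving the ``boundary'' edge $\{a_l,b_l\}$ is handled by a short argument (the Claim) showing that the $(k+1-l)$ edges of $G_0\setminus F_0$ all cross one of the two new edges, forming the required $(k+1)$-crossing. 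Your proposal does not leverage the single-swap structure and so has to face an unbounded iteration with cancellations at every stage; if you want to salvage the iterative approach you would at minimum need to prove the following: that a non-cancelling monomial $\frac{L}{M_U}m_E$ with $E$ a single swap touching $F_0\setminus G_0$ always exists and dominates all other non-cancelling monomials, and that such a monomial contains a $(k+1)$-crossing. That is essentially the paper's argument, reorganized.
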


\begin{proof}
Part (1) is nothing but Proposition~\ref{prop:fpp}(5), taking into account that if $v$ is sufficiently generic then it cannot give the same weight to two different matchings.

    For part (2), first observe that, by the same argument as above, the second term in each Pfaffian is a matching obtained by a swap of two consecutive edges in the corresponding $(k+1)$-crossing.
    
    Let $F_0$ and $G_0$ be two $k+1$-crossings, which are the leading terms of two Pfaffians $F$ and $G$. 
    
    Suppose that $|F_0\cap G_0|=l$, and let
    \[F_0=\{\{a_1,b_1\},\ldots,\{a_l,b_l\},\{c_1,d_1\},\ldots,\{c_{k+1-l},d_{k+1-l}\}\}\]
    \[G_0=\{\{a_1,b_1\},\ldots,\{a_l,b_l\},\{e_1,f_1\},\ldots,\{e_{k+1-l},f_{k+1-l}\}\}\]
    with $a_1<a_2<\ldots<a_l$, and the same for the rest. The leading term in the $S$-polynomial is obtained as a swap of one of the $(k+1)$-crossings, together with the $k+1-l$ remaining edges of the other matching. 
    
    Without loss of generality, suppose that it is a swap of $F_0$. It cannot be a swap of the first $l$ edges, because it would cancel with a swap of $G_0$. If the swap is between $\{c_i,d_i\}$ and $\{c_{i+1},d_{i+1}\}$, the new set contains the $(k+1)$-crossing $G_0$. 
    So, let's suppose that the swap is between $\{a_l,b_l\}$ and $\{c_1,d_1\}$.
    
    \textbf{Claim:} \textit{
    In this situation, it is impossible to have $a_l<e_i\le c_1$ and $b_l<f_i\le d_1$ at the same time.}
        Suppose it happens for some $i$. Then the swap between $\{a_l,b_l\}$ and $\{e_i,f_i\}$ contains the edges $\{a_l,f_i\}$, $\{b_l,e_i\}$ and $\{c_1,d_1\}$, while the swap we are considering contains $\{a_l,d_1\}$, $\{b_l,c_1\}$ and $\{e_i,f_i\}$. The sum of weights for the former is greater than for the latter, contradicting that it was the leading term of $S$.

    If all the edges $\{e_i,f_i\}$ cross $\{a_l,d_1\}$, as they also cross $\{a_j,b_j\}$ for all $1\le j\le l-1$, they still form a $k+1$-crossing. The same happens if they cross $\{c_1,b_l\}$. It remains only to show that one of these two cases must hold. Suppose in the contrary that an edge $\{e_i,f_i\}$ does not cross $\{a_l,d_1\}$ and other edge $\{e_j,f_j\}$ does not cross $\{c_1,b_l\}$.
    
    Then either $d_1\le e_i<f_i\le a_l$ or $a_l\le e_i<f_i\le d_1$. The first possibility would imply that $\{e_i,f_i\}$ does not cross $\{a_l,b_l\}$, hence the second must hold. Then $a_l<e_i<b_1$ and $b_l<f_i\le d_1$. By the claim above, $a_l<e_i\le c_1$ is impossible, so $c_1<e_i<b_1$. By the same argument, we get $a_l<e_j\le c_1$ and $d_1<f_j<a_1$. Putting this together, we get
    \[a_l<e_j\le c_1<e_i<b_1<b_l<f_i\le d_1<f_j<a_1\]
    But this implies that $\{e_i,f_i\}$ and $\{e_j,f_j\}$ do not cross, a contradiction.
\end{proof}

This lemma immediately implies that Pfaffians are a Gr\"obner basis. More precisely:

\begin{theorem}
\label{thm:groebner}
Pfaffians of degree $2k+2$ are a Gr\"obner basis for $I_k(n)$ with respect to any fp-positive vector $v\in \Sep_n$. 

If the fp-positive vector is sufficiently generic then $\ini_v(I_k(n))$ is the monomial ideal generated by all $(k+1)$-crossings. 
\end{theorem}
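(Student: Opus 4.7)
My plan is to apply Buchberger's criterion, using the two parts of the preceding lemma directly. For a sufficiently generic fp-positive weight $v \in \Sep_n$, part (1) of the lemma identifies the unique leading monomial of each Pfaffian of degree $k+1$ as the corresponding $(k+1)$-crossing monomial; consequently, the monomial ideal generated by $(k+1)$-crossings is contained in $\ini_v(I_k(n))$. This is the easy inclusion.

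For the reverse inclusion---equivalently, for the Pfaffians of degree $k+1$ to form a Gr\"obner basis of $I_k(n)$---I would verify Buchberger's criterion, namely that the $S$-polynomial of any two Pfaffians reduces to $0$ modulo the Pfaffians. Part (2) of the lemma gives that the leading monomial of each $S(P_F, P_G)$ contains a $(k+1)$-crossing, which is the leading monomial of some Pfaffian $P_H$. Subtracting an appropriate monomial multiple of $P_H$ therefore cancels the leading term of $S(P_F, P_G)$ and produces a remainder with strictly smaller leading monomial. I would then iterate, invoking Noetherian induction on the monomial order; termination is immediate since the $S$-polynomials are homogeneous of bounded degree.

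The main obstacle is that the lemma as stated only controls the leading monomial of the initial $S$-polynomial, not of its successive remainders. Two routes to close this gap seem viable. The first is to observe that the structural argument in the proof of the lemma---based on the interplay between matchings, swaps and crossings on a fixed vertex set---applies equally well to any polynomial expressible as a combination of Pfaffians in which the leading terms of the summands do not cancel pairwise, so that each successive remainder again has its leading monomial containing a $(k+1)$-crossing. The second, perhaps cleaner, route is to invoke the result of \cite{JonWel} that for one particular fp-positive weight the Pfaffians form a Gr\"obner basis with the crossing monomial ideal as initial ideal; this pins down the Hilbert series of the crossing ideal, which then matches the Hilbert series of $\ini_v(I_k(n))$ for our generic $v$ (already known to contain the same crossing monomials), forcing equality and circumventing the need to argue termination directly.

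Finally, the statement for arbitrary (possibly non-generic) $v \in \Sep_n$ follows from the generic case by standard semi-continuity of initial ideals: every $v \in \Sep_n$ can be approached by generic fp-positive weights, and the property that the Pfaffians form a Gr\"obner basis is closed under such limits---the only feature that may be lost is that $\ini_v$ of each Pfaffian picks a single crossing monomial.
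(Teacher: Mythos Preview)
Your approach---Buchberger's criterion via the preceding lemma---is exactly what the paper does; its entire proof is the sentence ``This lemma immediately implies that Pfaffians are a Gr\"obner basis.'' You are right that this glosses over a gap: part~(2) of the lemma shows only that the leading monomial of each $S$-polynomial contains a $(k+1)$-crossing, which enables \emph{one} reduction step, but says nothing about the leading monomial of subsequent remainders, whereas Buchberger's criterion requires the full reduction to zero.

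Of your two fixes, route~2 (Hilbert-series comparison via \cite{JonWel}) is valid and clean; note that it renders part~(2) of the lemma superfluous, since part~(1) alone gives the inclusion of the crossing ideal in $\ini_v(I_k(n))$, and equality of Hilbert functions then forces equality of ideals. Route~1, however, is not convincing as stated. The proof of part~(2) exploits that the leading term of $S(P_F,P_G)$ arises from a \emph{single swap} of one of the two $(k+1)$-crossings, together with the complementary edges of the other; after one reduction the remainder is a combination of three monomial multiples of Pfaffians and its leading term need not have this form, so the combinatorial argument does not obviously iterate. Your phrase ``combinations in which the leading terms of the summands do not cancel pairwise'' is also off target, since each reduction step is precisely a cancellation of leading terms. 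Your semicontinuity argument for non-generic $v\in\Sep_n$ is fine.
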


The case $k=1$ of this theorem is classical, via the equality $\Pf{1}{n} = \Gr2n$, see \cite[Theorem 3.20]{PacStu} and Remark~\ref{rem:tree_metrics} below). In fact, in this case the last sentence in the theorem is an ``if and only if''. Indeed, $\Sep_n$ is, by definition, the closed Gr\"obner cone of $I_1(n)$ corresponding to the initial ideal generated by $2$-crossings. 

In general, let $\Grob_k(n)$ be the Gr\"obner cone of $I_k(n)$ corresponding to the ideal of $(k+1)$-crossings. For higher $k$ it is no longer true that $\Sep_n = \Grob_k(n)$, we only have the containement $\Sep_n\subset \Grob_k(n)$ which follows from the previous theorem.
Our next result explicitly describes $ \Grob_k(n)$.

A priori, for arbitrary $k$, the  Gr\"obner cone is described by the following family of linear inequalities, running over all the even cycles $(i_0,i_1\ldots,i_{2l-1},i_0)$
of length $2l$ that contain an $l$-crossing contained in a $(k+1)$-crossing, for $l\le k+1$:
\begin{align}
    v_{i_0i_1}-v_{i_1i_2}+\ldots-v_{i_{2l-1}i_0} \ge 0
\label{eq:cycles}
\end{align}

But most of these inequalities are redundant. For example, for $k=1$, $\Grob_1(n)=\Sep_n$ which is defined by $2\binom{n}4$ four-point conditions, but only the $\binom{n}2-n$ in Eqs.\eqref{eq:sep} are irredundant. In fact,  it turns out that for every $k$ and every $n\ge 2k+3$, the Gr\"obner cone is simplicial:

\begin{theorem}
\label{thm:groebner-cone}
For $n\ge 2k+3$, $\Grob_k(n)$ is, modulo the lineality space $L_n$, a simplicial cone given by the following inequalities, one for each $\{i,j\}$ with $|j-i|\ge 2$:
\begin{align}
    w_{ij}&\ge 0\quad \text{if } |j-i|>k  & \text{(long inequalities)}\label{eqrel} \\
    \sum_{i'\le i<j\le j'\le i'+k+1} w_{i'j'} & \ge 0\quad \text{if } 2\le |j-i|\le k & \text{(short inequalities)} \label{eqirrel}
\end{align}
The ray opposite to the facet indexed by $\{i,j\}$ is generated by:
\begin{itemize}
    \item The basis vector indexed by $\{i,j\}$ in the $w$ coordinates if $|j-i|\ge k+2$, and
    \item The negative basis vector indexed by $\{i+1,j\}$ in the $v$ coordinates if $|j-i| < k+2$.
\end{itemize}
\end{theorem}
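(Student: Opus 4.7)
The plan is to exploit the dimension count $\dim(\R^\bnn/L_n)=\binom{n}{2}-n$, which matches both the number of inequalities and the number of ray generators listed in the theorem. Thus it suffices to verify that (a) every listed inequality is valid on $\Grob_k(n)$, (b) every listed ray lies in $\Grob_k(n)$, and (c) each ray lies on all listed facet hyperplanes except the one it is paired with. Simpliciality and coincidence with $\Grob_k(n)$ then follow automatically from the matching counts.

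For (a), the long inequality $w_{ij}\ge 0$ with $|j-i|\ge k+1$ (cyclically) is the $l=2$ case of \eqref{eq:cycles} applied to the 4-cycle $(i,i+1,j,j+1)$, which is a valid Gr\"obner inequality because the 2-crossing $\{i,j\},\{i+1,j+1\}$ extends to a $(k+1)$-crossing: the cyclic-distance hypothesis together with $n\ge 2k+3$ leaves room for $k-1$ further vertices on each of the two arcs bounded by $\{i,j\}$. For a short inequality at $\{i,j\}$ with $2\le|j-i|\le k$, I would take a Pfaffian on a window $U$ of $2k+2$ consecutive vertices straddling $\{i,j\}$, compare its $(k+1)$-crossing on $U$ with the unique matching whose symmetric difference with the crossing is a single cycle passing through $\{i,j\}$, and rewrite the resulting cycle inequality via \eqref{eq:d_to_w}. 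The telescoping then collapses to exactly $\sum_{i'\le i<j\le j'\le i'+k+1} w_{i'j'}\ge 0$.

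For (b), a short generator---the negative $v$-basis vector at a short edge $\{p,q\}=\{i+1,j\}$ with cyclic distance $\le k$---has its only nonzero coordinate at an irrelevant edge, and since irrelevant edges never appear in any $(k+1)$-crossing, $v_{p,q}$ occurs with non-positive coefficient in every inequality \eqref{eq:cycles}, so the ray trivially satisfies all of them. A long generator---the positive $w$-basis vector at $\{i,j\}$ with $|j-i|\ge k+2$---corresponds via \eqref{eq:w_to_d} to the $\{0,1\}$-indicator of edges separating $\{i,j\}$, and validity of every cycle inequality on this vector is Proposition~\ref{prop:fpp}(5) applied to this specific fp-positive weight, since the $(k+1)$-crossing separates the fixed chord at least as often as any other matching on $2k+2$ vertices. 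For (c), a short computation shows that the short generator at $\{p,q\}$ produces nonzero $w_{a,b}$ only at $(a,b)\in\{(p-1,q-1),(p,q-1),(p-1,q),(p,q)\}$ with signs $\pm\tfrac12$, and a case analysis of which listed inequalities (long or short) pick up a nonzero contribution singles out the paired facet $\{p-1,q\}$ as the unique surviving positive one; the analysis for long generators is even simpler.

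The main obstacle I anticipate is the telescoping step in (a) for the short inequalities: choosing the Pfaffian and comparison matching so that the resulting cycle inequality, after rewriting via \eqref{eq:d_to_w}, collapses to exactly the compact short-arc sum in the statement with no spurious terms. The natural choice is the matching obtained from the $(k+1)$-crossing by the rotation passing through $\{i,j\}$, and the signs conspire because that rotation traverses each strip adjacent to $\{i,j\}$ exactly once. Everything else reduces to dimension counting, Proposition~\ref{prop:fpp}(5), and routine case analysis.
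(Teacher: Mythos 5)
Your proposal follows essentially the same strategy as the paper's proof: (a) verify validity of the listed inequalities, (b) show the listed rays lie in $\Grob_k(n)$, (c) establish the ray/facet incidence pattern, and conclude by a dimension count. Your arguments for (b) and (c) also match the paper's in substance: the short generators have their only nonzero $v$-coordinate at an irrelevant edge, which never appears in a $(k+1)$-crossing (nor in any $l$-crossing contained in one), so they contribute nonpositive weight to every inequality \eqref{eq:cycles}; and the long generators lie in $\Sep_n\subset\Grob_k(n)$.

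The one point you correctly anticipate as the main difficulty is precisely where the proposal is imprecise: the phrase ``the unique matching whose symmetric difference with the crossing is a single cycle passing through $\{i,j\}$'' is not uniquely determined --- many swaps of the $(k+1)$-crossing produce a symmetric-difference cycle through both vertices $i$ and $j$. The paper resolves this by fixing $\ell = j-i$, taking $U$ to be the $2k+2$ consecutive vertices $\{i-k+\ell-1,\dots,i+k+\ell\}$, and taking $M$ to be the $k$-crossing on $U\setminus\{i+1,j\}$ together with the single extra edge $\{i+1,j\}$. It then checks directly that the crossing count $c_M(E)$ equals the separation length $\ell_U(E)$ for every pair of sides $E$ except exactly those $\{i',j'\}$ appearing in the short-arc sum, for which $c_M(E)=\ell_U(E)-2$; the weight gap between the $(k+1)$-crossing and $M$ is therefore twice the left-hand side of \eqref{eqirrel}. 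With this explicit matching substituted in, the rest of your outline goes through as written.
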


Observe that the ``long'' inequalities are also facet-defining for $\Sep_n$ and the ``short'' ones are sums of facet-defining ``short'' inequalities in $\Sep_n$.

\begin{proof}
    First let's see that the inequalities are valid in the cone. The first group \eqref{eqrel} is obvious, because the $(k+1)$-crossing has higher weight than any swap. For the second, let $\{i,i+\ell\}$ be an edge with $\ell\le k$. For each edge $E\in T$ we call \emph{length of $E$ with respect to $U$} and denote it $\ell_U(E)$ the smallest size of the two parts of $U$ separated by $E$. For a matching $M$ of $U$ and an edge $E$ we denote by $c_M(E)$ the number of edges of $M$ that cross $E$.
    
    Consider the matching
    \[M=\{\{i+1,i+\ell\},\{i+2,i+k+3\},\{i+3,i+k+4\},\ldots,\{i+\ell-1,i+k+\ell\},\]
    \[\{i-k+\ell-1,i+\ell+1\},\{i-k+\ell,i+\ell+2\},\ldots,\{i,i+k+2\}\}\]
    This is a $k$-crossing plus the edge $\{i+1,i+\ell\}$. The coefficient of a $w$ will be the same in this matching than in the $(k+1)$-crossing, that is, $\ell_U(E)=c_M(E)$, except for the edges $\{i',j'\}$ with $i'\le i<i+l\le j'\le i'+k+1$, for which $c_M(E)=\ell_U(E)-2$. Hence the left hand side of \eqref{eqirrel} is half the difference between the weights, and the inequalities are true.
    
    Once we know that the inequalities are valid, let $G_{ij}$ be the ray defined in the statement. We only need to show that at each $G_{ij}$ all inequalities are equalities, except for the one of index $ij$, and that the $G_{ij}$ indeed lie in $\Grob_k(n)$.
    
    Indeed, if $|i-j|>k+1$ then $G_{ij}$ has all $w$ coordinates equal zero except $w_{ij}>0$. it is clear that all inequalities of the form \eqref{eqirrel} are equalities (since they only involve $w$s of length $\le k+1$) and all of type  \eqref{eqrel} except the one for $ij$ are equalities (by construction).
    If $|i-j|\le k+1$, in $G_{ij}$
    we have that the the only nonzero $v$ coordinate is $v_{i+1,j}$, which is negative. We take it equal to $-1$.
    Proposition \ref{prop:d_to_w} implies that in the $w$ coordinates the only non-zero ones are
    \[
    w_{i+1,j} = w_{i,j-1} =-\frac12,
    \qquad
    w_{i+1,j-1} = w_{i,j} =\frac12.
    \]
    
    Now, if $j-i\le k$, \eqref{eqrel} gives always 0 and \eqref{eqirrel} gives 1/2 exactly for one sum, the corresponding to $\{i,j\}$, and 0 for the rest. If $j-i=k+1$, \eqref{eqirrel} gives always 0 and \eqref{eqrel} gives 1 only for $w_{i,j}$.
    
    It remains to see that these rays are in $\Grob_k(n)$: 
    \begin{itemize}
        \item For the $w$ basis vectors this follows from the fact that they are in $\Sep_n$.
        \item For the negative $v$ basis vectors, we are giving weight $-1$ to an irrelevant edge and 0 to all the other edges; it is clear that every $(k+1)$-crossing gets weight zero, and every other matching gets nonpositive weight.
    \end{itemize}
\end{proof}

\begin{remark}\label{rem:cone}
For $n=2k+2$ the theorem fails, but it is also easy to describe $\Grob_k(n)$ since we have a single Pfaffian and the Gr\"obner fan is simply the normal cone of its Newton polytope. In particular, none of the equalities (\ref{eq:cycles})
is redundant and $\Grob_k(n)$ has as many facets as there are matchings of $[2k+2]$ whose symmetric difference with the $k+1$-crossing is a single cycle. For example:
\begin{itemize}
    \item For $k=2$, $n=6$, all matchings differ from the $3$-crossing in a single cycle. Thus, the $\Grob_2(6)$ has (modulo its lineality space) dimension $\binom{6}{2}-6 = 9$ and 14 facets.

    \item For $k=3$, $n=8$, there are matchings differing from the $4$-crossing in two cycles of length four. There are exactly 12 of them, coming from the three ways of partitioning the $4$-crossing into two pairs of edges and the two ways of completing each pair of edges into a four-cycle. Hence, $\Grob_3(8)$ has dimension $\binom{8}{2}-8 = 20$ and it has $105-1-12 = 92$ facets.
\end{itemize}

\end{remark}

Another difference between $k=1$ and $k>1$ is that for $k=1$ Pfaffians are a universal Gr\"obner basis for the ideal $I_1(n)$ (one proof is that every other Gr\"obner cone of $I_1(n)$ can be sent to $\Sep_n$ by a permutation of $[n]$, see \cite[Theorem 4.3]{SpeStu}). The same is known to fail for higher Grassmannians (see e.g., \cite[Section 7]{SpeStu} or \cite[Example 4.3.10]{MacStu}) and it also fails for higher Pfaffians:

\begin{example}[Pfaffians are not a universal Gr\"obner basis]
\label{exm:UGB}
Let $n=9$ and $k=3$. Consider the vector with
\begin{align*}
&v_{12}=v_{34}=v_{56}=v_{47}=v_{89}=2,\\
&v_{58}=v_{69}=1,\\
&v_{17}=v_{28}=v_{39}=10,
\end{align*}
and the rest of entries equal to zero. We are going to show that, regardless of the field $\K$, Pfaffians are not a Gr\"obner basis for this choice of $v$ (or any small perturbation of it).

Call $f$ and $g$ the Pfaffians on the sets $U=\{1,2,3,4,5,6\}$ and 
$V=\{4,5,6,7,$ $8,9\}$, which have as matchings of highest weight $\{12,34,56\}$ and $\{56,47,89\}$, both of weight 6. That is, 
\[
in(f)=x_{12}x_{34}x_{56},\qquad
in(g)=x_{56}x_{47}x_{89}.
\]
The following polynomial, which is nothing but the $S$-polynomial of $f$ and $g$ that arises in Buchberger's algorithm, lies in $I_3(9)$
\[
h:=x_{12}x_{34}\, g - x_{47}x_{89} \,f.
\]

The only monomials of weight $> 6$  in $h$ are the initial terms of the two parts
$x_{12}x_{34}\, g$ and $ x_{47}x_{89}\, f$, which cancel out, and $x_{12}x_{34} \,x_{47}x_{58}x_{69}$, of weight 8. Hence, we have that
$in(h) =x_{12}x_{34} \,x_{47}x_{58}x_{69}$.

In particular, if  Pfaffians were a universal Gr\"obner basis, there should be a Pfaffian  whose leading monomial divides $in (h)$. That is, there should be a set $W\subset [9]$ of six elements whose matching $M$ of maximum weight is contained in $\{12, {34} ,{47},{58}, 69\}$.
This $W$ does not exist. Indeed, $W$ cannot contain any of the pairs $\{1,7\}$, $\{2,8\}$ or $\{3,9\}$, because then its highest matching would have weight $\ge 10$. And every set of three edges among 
$\{12, {34} ,{47},{58}, 69\}$ not containing any of those pairs of vertices contains the edges $\{{58}, 69\}$, which cannot be in the leading term of any Pfaffian since they produce smaller weight than their swap $\{{56}, 89\}$.
\end{example}

\begin{remark}
\label{rem:groebnerpfaffians}
That Pfaffians are a Gr\"obner basis for the ideal $I_k(n)$ they generate is known since long. The earliest proof we are aware of is by Herzog and Trung~\cite{HerTru}, who construct a lexicographic order for which the initial ideal $in_<(I_k(n))$ is generated by the $(k+1)$-nestings. Here $\{a,d\}$ and $\{b,c\}$ are nested if $1\le a<b<c<d\le n$. 

This result is recovered by Sturmfels and Sullivant~\cite{StuSul} as a special case of a more general behaviour; Sturmfels and Sullivant study the relation between the Gr\"obner bases of an ideal $I$ and those of its secant ideals $I^{\{k\}}$, and call a monomial order ``delightful'' if the initial ideal of $I^{\{k\}}$ can be obtained from that of $I$ by the following simple combinatorial rule: the standard monomials in $in_<(I^{\{k\}})$ are the products of $k$ standard monomials of $in_<(I)$. They then consider $I_k(n) = I_1(n)^{\{k\}}$ as an example~\cite[Example 4.13]{StuSul}, and show that the lexicographic order of Herzog and Trung~\cite{HerTru} is delightful.

Much closer to our framework is the work of Jonsson and Welker~\cite{JonWel}. Taking a lexicographic order different from that of \cite{HerTru} they obtain as initial ideal the same one from our Theorem~\ref{thm:groebner}, generated by $(k+1)$-crossings. (This same result is stated without proof in \cite[p.~107]{PacStu}).
Our Theorem~\ref{thm:groebner} is a bit more general, in that it says that \emph{any} order that produces for $I_1(n)$ the initial ideal generated by $2$-crossings automatically produces for $I_k(n)$ the initial ideal generated by $(k+1)$-crossings. That is, we show the fp-positive cone, which is a Gr\"obner cone of $I_1(n)$ by definition, to be contained in a Gr\"obner cone of $I_k(n)$ for every $k$.

It is worth noticing that our orders are not ``delightful'' in the sense of~\cite{StuSul}. Indeed, the maximal square-free standard monomials in our initial ideal are the $k$-triangulations of the $n$-gon, and not every $k$-triangulation is the union of $k$ triangulations. For a trivial example observe that the complete graph on $5$ vertices is a $2$-triangulation but it is not the union of two triangulations of the pentagon. Related to this, see \cite[Section 6]{PilSan}.
\end{remark}

Theorem~\ref{thm:groebner} has a natural interpretation via $(k+1)$-free sets and multitriangulations. Observe that $k(2n-2k-1)$, the dimension of $\Ass{k}{n}$, coincides with that of $\Pf{k}{n}$.

\begin{corollary}
    \label{coro:groebner}
    If the weight vector $v$ for the variables in $\K[x_{i,j}, \{i,j\}\in \bnn]$ lies in $\Grob_k(n)$ (in particular, if it is fp-positive)  and generic then the initial ideal of $I_k(n)$ equals the Stanley-Reisner ideal of the extended $k$-associahedron $\Ass{k}{n}$. That is: it is the radical monomial ideal whose square-free standard monomials form, as a simplicial complex, $\Ass{k}{n}$.
\end{corollary}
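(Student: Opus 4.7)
The plan is to translate Theorem \ref{thm:groebner} and the definition of $\Grob_k(n)$ into the language of Stanley-Reisner theory, through three short steps.

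First, I would determine $\ini_v(I_k(n))$ explicitly. For a generic $v$ in the relative interior of $\Sep_n$, Theorem \ref{thm:groebner} says that the Pfaffians of degree $k+1$ are a Gr\"obner basis of $I_k(n)$ whose initial terms are exactly the $(k+1)$-crossing monomials $\prod_{\{i,j\}\in C} x_{ij}$. By the definition of Gr\"obner basis, $\ini_v(I_k(n))$ is then the monomial ideal $J_k(n)$ generated by these crossing monomials. To extend this to any generic $v$ in $\Grob_k(n)$, I would invoke the fact that the initial ideal is constant on the relative interior of each Gr\"obner cone: since $\Sep_n \subset \Grob_k(n)$ by Theorem \ref{thm:groebner-cone} and a generic fp-positive vector lies in the interior of $\Grob_k(n)$, the same monomial ideal $J_k(n)$ is obtained for every generic $v\in\Grob_k(n)$.

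Second, I would identify $J_k(n)$ with the Stanley-Reisner ideal of $\Ass{k}{n}$. Recall that the Stanley-Reisner ideal of a simplicial complex on ground set $\bnn$ is the squarefree monomial ideal generated by the indicator monomials of its minimal non-faces. By the very definition of $\Ass{k}{n}$, its faces are the $(k+1)$-free subsets of $\bnn$, so the non-faces are exactly the subsets containing a $(k+1)$-crossing. Any proper subset of a $(k+1)$-crossing has at most $k$ edges and is hence $k$-free, a fortiori $(k+1)$-free; therefore the minimal non-faces of $\Ass{k}{n}$ are precisely the $(k+1)$-crossings themselves, and $J_k(n)$ agrees generator by generator with the Stanley-Reisner ideal.

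Finally, since the Stanley-Reisner ideal is squarefree, it is automatically radical, and its standard monomials are in bijection with the faces of $\Ass{k}{n}$, which is the descriptive characterization asked for in the statement. There is no substantial obstacle here beyond bookkeeping; the only delicate point is the transition from the $\Sep_n$ case covered by Theorem \ref{thm:groebner} to the full $\Grob_k(n)$, and this is handled by the defining property of a Gr\"obner cone rather than by any further computation with Pfaffians.
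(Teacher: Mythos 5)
Your proof is correct and follows the approach the paper intends: the corollary is left without an explicit proof because it is a direct unwinding of Theorem~\ref{thm:groebner} (giving the initial ideal on $\Sep_n$), the fact that $\Grob_k(n)$ is defined as the Gr\"obner cone whose interior contains the generic fp-positive vectors, and the identification of the ideal generated by $(k+1)$-crossings with the Stanley--Reisner ideal via minimal non-faces. Your three steps supply exactly the bookkeeping the authors leave implicit, and each step is sound.
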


\subsection{The algebraic matroid of $\Pf{k}{n}$ and low-rank matrix completion}
\label{sec:matroid}

Let $I \subset \K[x_1,\dots,x_N]$ be a prime ideal, the \emph{algebraic matroid} of $I$, which we denote $\MM(I)$ has the variables $E:=\{x_1,\dots,x_N\}$ as elements and a subset $S\subset E$ is independent if $I$ does not contain any non-zero polynomial in  $\K[S]$. If $\K$ is algebraically closed and $V=V(I)$ is the irreducible variety of $V$, then dependence and independence of a subset $S$ of variables can be told via the natural projection map $\pi_S: V\subset \K^N \to \K^S$, as follows.

a set is independent in $\MM(I)$ if, and only if, the corresponding projection map $\pi_S:V\to K^S$ is dominant; that is, its image is (Zariski) dense.
We use \cite{Rosen, RST:algebraic, KRT} as our main sources for algebraic matroids.

\begin{theorem}
\label{thm:algebraic}
Let $\K$ be an algebraically closed field, $I\subset \K[x_1,\dots,x_N]$ a prime ideal and $V$ its algebraic variety. For each $S\subset [N]$
denote by $\pi_S : \K^{[N]}\to \K^S$ the coordinate projection to $S$. Then:
\begin{enumerate}
\item $S$ is independent in $\MM(I)$ if and only if $\pi_S(V)$ is  Zariski dense in $\K^S$.
\item The rank of $S$ is equal to the dimension of $\pi_S(V)$.
\item $S$ is spanning if and only if  $\pi_S$ is finite-to-one: for every $x\in \K^S$ the fiber $\pi_S^{-1}(x)$ is finite (perhaps empty).
\end{enumerate}
\end{theorem}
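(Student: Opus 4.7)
The plan is to reduce all three parts to the standard dictionary between prime ideals, elimination ideals, and Zariski closures of coordinate projections. For $S\subseteq [N]$, let $J_S := I\cap \K[x_i:i\in S]$ denote the $S$-elimination ideal. Because $I$ is prime, so is $J_S$; and because $\K$ is algebraically closed, $J_S$ is the vanishing ideal of $W_S:=\overline{\pi_S(V)}$. Hence $W_S$ is an irreducible subvariety of $\K^S$ whose dimension equals the transcendence degree of its function field over $\K$.

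Part (1) is then immediate. By definition of the algebraic matroid, $S$ is independent iff $J_S=0$, which is equivalent to $W_S=\K^S$, i.e.\ to $\pi_S(V)$ being Zariski dense in $\K^S$.

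For part (2), the rank of $S$ is the maximum size of an independent subset $T\subseteq S$. By (1) applied to each $T$, this equals the largest $T\subseteq S$ whose corresponding coordinate functions are algebraically independent over $\K$ modulo $I$. Since these functions factor through the projection $V\to W_S$, this maximum is the transcendence degree of $\K(W_S)$ over $\K$, which equals $\dim W_S = \dim \pi_S(V)$.

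For part (3), $S$ is spanning iff $\rank(S)$ equals the total rank $\dim V$ of the matroid, equivalently (by (2)) $\dim\pi_S(V)=\dim V$. Applying the fiber-dimension theorem to the dominant morphism $\pi_S:V\to W_S$ of irreducible varieties, this equality holds iff the generic fiber has dimension zero, hence is finite. The main technical obstacle is the strengthening from ``generic fiber finite'' to ``every non-empty fiber finite'' as stated: one must argue that when the dimensions match, each coordinate $x_i$ with $i\notin S$ is integral, and not merely algebraic, over the coordinate ring of $W_S$ modulo $I$, so that every non-empty fiber is uniformly bounded in size by a product of degrees of integral relations. This integrality step is the only subtle point; the rest is bookkeeping via the ideal-variety correspondence.
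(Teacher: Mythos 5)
Your proofs of parts (1) and (2) are correct and essentially self-contained; they differ from the paper's only cosmetically (the paper cites Rosen--Sidman--Theran for (1), where you give a direct argument via elimination ideals and the Nullstellensatz, and then derives (2) by essentially your argument).

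For part (3) you are right to flag the passage from ``the generic fiber is finite'' to ``every non-empty fiber is finite'' as the delicate step, but the integrality assertion you invoke to bridge it is false in general, so the proof as sketched cannot be completed. Take $I=(x_1x_3-x_2)\subset\K[x_1,x_2,x_3]$, a prime ideal with $V\cong\K^2$ via $(a,c)\mapsto(a,ac,c)$. The set $S=\{1,2\}$ is a basis of $\MM(I)$: the elimination ideal $I\cap\K[x_1,x_2]$ vanishes, so $S$ is independent, and $|S|=2=\dim V$. Yet $x_3$, while algebraic over $\K[x_1,x_2]$ via $x_1x_3-x_2=0$, is \emph{not} integral, since the leading coefficient $x_1$ is not a unit; correspondingly $\pi_S^{-1}(0,0)=\{(0,0,c):c\in\K\}$ is infinite. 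So ``spanning'' yields only that $\pi_S$ is \emph{generically} finite-to-one, which is in fact all the paper's own one-sentence proof of (3) establishes as well. The statement of part (3) should therefore be read as asserting generic finiteness; with the unsupported integrality claim deleted, your argument proves exactly that and agrees with the paper's intent.
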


\begin{proof}
	The first part is Theorem 15 in \cite{RST:algebraic}.
 For the second, the rank of $S$ is the maximum size among independent subsets of $S$, which are the subsets $T$ for which  $\pi_T(V)=\pi_T(\pi_S(V))$ has dimension $|T|$. The maximal ones are those which have the same size as the dimension of $\pi_S(V)$, so this is the rank.

	The third part is a consequence of the second, because a projection has the same dimension than the variety if and only if the fiber has dimension zero, and a fiber has dimension zero if and only if it is finite.
\end{proof}

This statement has as a consequence that, over an algebraically closed field, we can speak of the algebraic matroid of the irreducible variety $V$, and denote it $\MM(V)$, instead of looking at the ideal.

We now turn our attention to the case of $\Pf{k}{n}$.
\begin{corollary}
	\label{coro:bases}
	$(k+1)$-free subsets of edges are independent in the algebraic matroid of $\Pf{k}{n}$ and $k$-triangulations are bases.
\end{corollary}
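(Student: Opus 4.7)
The plan is to exploit the standard bridge between Gr\"obner theory and algebraic matroids: whenever a prime ideal $I$ admits a squarefree monomial initial ideal, the faces of the associated Stanley--Reisner complex are automatically independent in $\MM(I)$. Combined with Corollary~\ref{coro:groebner}, which identifies $\ini_v(I_k(n))$ with the Stanley--Reisner ideal of $\Ass{k}{n}$ for generic $v\in\Grob_k(n)$, this gives the first half of the statement immediately, and a dimension count gives the second.

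Concretely, I would pick a $v$ in the relative interior of $\Grob_k(n)$, so that $\ini_v(I_k(n))$ is the squarefree monomial ideal generated by $\prod_{e\in C}x_e$ as $C$ ranges over $(k+1)$-crossings. Let $T\subset\bnn$ be $(k+1)$-free; by Theorem~\ref{thm:algebraic}(1) it suffices to show that $I_k(n)\cap\K[x_e:e\in T]=0$. Suppose, for contradiction, a nonzero $f$ lies in this intersection. Then $\ini_v(f)$ is a nonzero polynomial in the variables indexed by $T$, and lies in $\ini_v(I_k(n))$. Because $\ini_v(I_k(n))$ is a monomial ideal, \emph{every} monomial appearing in $\ini_v(f)$ must individually belong to it, hence be divisible by $\prod_{e\in C}x_e$ for some $(k+1)$-crossing $C$. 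Since these monomials involve only variables indexed by $T$, this forces $C\subset T$, contradicting the $(k+1)$-freeness of $T$.

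For the basis claim I would invoke Theorem~\ref{thm:algebraic}(2): the rank of $\MM(\Pf{k}{n})$ equals $\dim\Pf{k}{n}=k(2n-2k-1)$ (applied to $S=\bnn$, where $\pi_S$ is the identity). Every $k$-triangulation $T$ has cardinality exactly $k(2n-2k-1)$ and is $(k+1)$-free; by what was just proved it is independent, and since its size equals the full rank of the matroid it is automatically a basis.

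The only point needing care is the hypothesis behind Theorem~\ref{thm:algebraic}: it requires $\Pf{k}{n}$ to be an irreducible variety, i.e.\ $I_k(n)$ to be prime. This is a classical fact about Pfaffian ideals (de Concini--Procesi) and deserves an explicit mention. Apart from that, no step presents a genuine difficulty; the argument is essentially a direct translation of Corollary~\ref{coro:groebner} through the standard squarefree-initial-ideal principle, so I do not anticipate a substantial obstacle.
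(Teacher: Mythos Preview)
Your argument is correct and matches the paper's: the independence step is identical (a nonzero $f\in I_k(n)$ supported on $T$ would have its initial monomial divisible by a $(k+1)$-crossing contained in $T$), and for the basis step the paper likewise reduces to checking that the rank of $\MM(\Pf{k}{n})$ equals $k(2n-2k-1)$. The only difference is that the paper's proof includes a short self-contained computation of this rank (via the observation that a generic choice of the first $2k$ rows above the diagonal uniquely determines the remaining entries through Pfaffian relations), rather than citing $\dim\Pf{k}{n}$ as a known fact; your explicit mention of primeness of $I_k(n)$ is a welcome addition that the paper leaves implicit.
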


\begin{proof}
	Let $S$ be a dependent set in the matroid. Then there is a polynomial $f$ in $I_k(n)$ using only the variables in $S$ and the initial monomial of $f$ according to any fp-positive weight also uses only variables in $S$. By Corollary~\ref{coro:groebner} $I_k(n)$ has an initial ideal consisting only of $(k+1)$-crossing monomials, hence $f$ has a monomial with a $(k+1)$-crossing, and  $S$ is not $(k+1)$-free.
	
	For the second part, it is enough to see that the rank of the matroid equals $2nk-\binom{2k+1}2$. This is because points in $\Pf{k}{n}$ are antisymmetric matrices of rank $\le 2k$. In order to construct one such matrix $M$ we can choose generic elements in the first $2k$ rows above the diagonal and every other element $M_{i,j}$  ($i,j>2k$) is uniquely determined by them. Indeed, the Pfaffian of the rows and columns indexed by $[2k]\cup\{i,j\}$ has the form $A M_{i,j} + B$ where $A$ is the Pfaffian of $[2k]$. Since our choice was generic, $A\ne 0$.
\end{proof}

This proof already shows the relation between independence in the algebraic matroid of $\Pf{k}{n}$ and low-rank completion of partially known antisymmetric matrices. Suppose that we are given a matrix $M\in \K^{n\times n}$, of which we only know a subset $T$ of entries, we want to deduce the rest of entries with the restriction that $M$ needs to be antisymmetric and have at most range $2k$.
Corollary~\ref{coro:bases} then immediately allows us to prove Theorem~\ref{thm:matroid}:


\begin{proof}[Proof of Theorem~\ref{thm:matroid}]
Consider the projection $\pi_T:\K^\bnn\to \K^T$ that keeps only the coordinates of $T$. In part (1) we are saying that $\pi_T$ is almost surjective (any element has a preimage except for a zero measure set) and in part (2) that it is finite-to-one (every point $x\in \K^T$ has a finite fiber $\pi^-1(x)$).
Both parts follow  from Corollary~\ref{coro:bases}, via the characterization of algebraic matroids in Theorem~\ref{thm:algebraic}
%
%
\end{proof}

It is worth mentioning that the algebraic matroid of $\Pf{k}{n}$ coincides with the \emph{generic hyperconnectivity matroid in dimension $2k$} introduced by Kalai~\cite{Kalai}. Let us review this relation.

The \emph{hyperconnectivity matrix} of a configuration $\{\p_1,\dots,\p_n\} \subset \R^d$ is defined to be
\[
H(\p):=
\begin{pmatrix}
\p_2 & -\p_1 & 0 & \dots & 0 & 0 \\
\p_3 & 0 & -\p_1 & \dots & 0 & 0 \\
\vdots & \vdots & \vdots & & \vdots & \vdots \\
\p_n & 0 & 0 & \dots & 0 & -\p_1 \\
0 & \p_3 & -\p_2 & \dots & 0 & 0 \\
\vdots & \vdots & \vdots & & \vdots & \vdots \\
0 & 0 & 0 & \dots & \p_n & -\p_{n-1}
\end{pmatrix}.
\]
We call \emph{hyperconnectivity matroid of $\p$} the  linear matroid $\HH_d(\p)$ of rows of $H(\p)$. There clearly exists an open dense subset of configurations where the matroid is the most free; we call that matroid the \emph{generic hyperconnectivity matroid} of dimension $d$ and denote it $\HH_d$.

On the other hand, if an algebraic variety $V$ is parametrized by a polynomial map $T:\R^M \to V\subset \R^N$, then the algebraic matroid of $V$ equals the linear matroid of rows of the Jacobian of $T$ at a sufficiently generic point of $\R^M$~\cite[Proposition 2.5]{Rosen}.
In our case, $\Pf{k}{n}$ is parametrized by the following linear map:
\begin{align}
T: (\R^n)^{2k} &\to \Pf{k}{n} \subset \R^{\binom{n}2} \nonumber\\
(\ab_1,\bb_1,\dots,\ab_k,\bb_k) &\mapsto \sum_{l=1}^k \left(a_{l,i}b_{l,j}-a_{l,j}b_{l,i}\right)_{1\le i<j\le n},
\label{eq:T}
\end{align}
where $\ab_l=(a_{l,1},\dots,a_{l,n})$ and $\bb_l=(b_{l,1},\dots,b_{l,n})$.
The Jacobian of $T$ at a point $(\ab_1,\bb_1,\dots,\ab_k,\bb_k)$ then coincides with the hyperconnectivity matrix of the configuration 
$(\p_1,\dots,\p_n)$ where
\[
\p_i=(b_{1,i}, -a_{1,i},\dots, b_{k,i}, -a_{k,i}).
\]

In particular:

\begin{proposition}
\label{prop:H_equals_S}
The algebraic matroid of $\Pf{k}{n}$ coincides with the generic hyperconnectivity matoid in dimension $2k$.
\end{proposition}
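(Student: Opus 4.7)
The plan is to combine two ingredients, both already essentially in place in the excerpt. The first is the standard fact (Rosen, \cite[Proposition 2.5]{Rosen}) that when an irreducible variety $V$ is parametrized by a polynomial map $T$, the algebraic matroid $\MM(V)$ equals the linear matroid on the rows of the Jacobian $dT$ evaluated at a generic point of the parameter space. The second is the explicit computation given in the paragraph immediately before the proposition, which identifies the Jacobian of the map $T$ in equation~\eqref{eq:T} with the hyperconnectivity matrix $H(\p)$ of the configuration $\p_i=(b_{1,i},-a_{1,i},\dots,b_{k,i},-a_{k,i})\in \R^{2k}$.

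What remains, therefore, is to verify two things. First, that $T$ is a genuine parametrization of $\Pf{k}{n}$, in the sense that its image is Zariski dense. Since $\Pf{k}{n}$ is irreducible it suffices to show that $T$ is surjective onto $\Pf{k}{n}$. For this I would invoke the classical normal form for antisymmetric bilinear forms over an algebraically closed field of characteristic different from $2$: any antisymmetric matrix $M$ of rank $2r\le 2k$ is congruent to a block diagonal matrix with $r$ copies of $\bigl(\begin{smallmatrix}0&1\\-1&0\end{smallmatrix}\bigr)$ followed by zero blocks. Each nonzero $2\times 2$ block can be rewritten as a rank-two summand of the form $\ab_l\bb_l^T-\bb_l\ab_l^T$ (with $\ab_l,\bb_l\in \R^n$ read off from the change-of-basis matrix), and the remaining summands can be taken to be zero. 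This exhibits $M$ as a point in the image of $T$.

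Second, I need to observe that the reindexing $(\ab_1,\bb_1,\dots,\ab_k,\bb_k)\leftrightarrow (\p_1,\dots,\p_n)$ used to identify $dT$ with $H(\p)$ is a linear automorphism of the parameter space $\R^{2kn}$; indeed, it is merely a permutation of coordinates together with some sign changes. Consequently a generic parameter corresponds to a generic point configuration and vice versa, so the linear matroid on the rows of $dT$ at a generic parameter coincides with the linear matroid on the rows of $H(\p)$ at a generic configuration. By Rosen's theorem the former is $\MM(\Pf{k}{n})$, and by definition the latter is the generic hyperconnectivity matroid $\HH_{2k}$, yielding the claimed equality.

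The only real work lies in the surjectivity verification; but the normal-form argument is classical and short, and once it is established the remainder of the argument is essentially bookkeeping already carried out in the excerpt.
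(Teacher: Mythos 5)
Your proposal is correct and follows essentially the same route as the paper: both invoke Rosen's criterion that $\MM(V)$ is the row matroid of the Jacobian of a parametrization at a generic point, apply it to the linear map $T$ of Eq.~\eqref{eq:T}, and identify that Jacobian with the hyperconnectivity matrix $H(\p)$ after the obvious reindexing of parameters as a point configuration. You additionally spell out why $T$ is actually onto $\Pf{k}{n}$ via the normal form for alternating forms — a verification the paper leaves implicit — but this is filling in a routine step rather than a different proof.
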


\begin{corollary}
\label{cor:hyperconnectivity}
$k$-triangulations are bases in the generic hyperconnectivity matroid of dimension $2k$.
\end{corollary}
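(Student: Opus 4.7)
The plan is to derive this as an immediate consequence of the two results that immediately precede it, with no additional work beyond citing them.

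Specifically, Proposition \ref{prop:H_equals_S} asserts an equality of matroids: $\MM(\Pf{k}{n}) = \HH_{2k}$. Since ``being a basis'' is a property of the matroid alone, once this equality is in hand, any family of subsets of $\bnn$ known to be bases in $\MM(\Pf{k}{n})$ is automatically a family of bases in $\HH_{2k}$. Corollary~\ref{coro:bases} supplies exactly what we need on the algebraic side: $k$-triangulations are bases of $\MM(\Pf{k}{n})$. Chaining these two statements gives the conclusion.

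So the proof will consist of a single sentence: apply Corollary~\ref{coro:bases} to obtain that $k$-triangulations are bases of the algebraic matroid $\MM(\Pf{k}{n})$, then invoke the identification in Proposition~\ref{prop:H_equals_S} to transport this conclusion to the generic hyperconnectivity matroid $\HH_{2k}$. There is no genuine obstacle here; the real content has already been established, namely (i)~the Gr\"obner basis statement of Theorem~\ref{thm:groebner} that forces $(k+1)$-free sets to be independent, together with a dimension count showing $k$-triangulations achieve the rank $2nk-\binom{2k+1}{2}$, and (ii)~the Jacobian computation identifying the parametrization~\eqref{eq:T} of $\Pf{k}{n}$ with a hyperconnectivity matrix evaluated at a generic configuration. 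The corollary just packages these together.
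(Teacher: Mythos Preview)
Your proposal is correct and matches the paper's approach exactly: the corollary is stated in the paper without proof, as an immediate consequence of Corollary~\ref{coro:bases} and Proposition~\ref{prop:H_equals_S}, which is precisely the chaining you describe.
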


This statement is related to the following conjecture of Pilaud and Santos:

\begin{conjecture}[\protect{\cite[Conjecture 8.6]{PilSan}}]
\label{cor:bar-and-joint}
$k$-triangulations are bases in the generic bar-and-joint rigidity matroid of dimension $2k$.
\end{conjecture}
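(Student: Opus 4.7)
The plan is to mirror the strategy that established Corollary~\ref{cor:hyperconnectivity}: realize the generic bar-and-joint rigidity matroid in dimension $2k$ as the algebraic matroid $\MM(V)$ of an explicit irreducible variety $V\subset\K^{\bnn}$, then run the Gr\"obner argument of Theorem~\ref{thm:groebner} for the ideal $I(V)$ with respect to a generic fp-positive weight $v\in\Sep_n$. If the resulting initial ideal is again the Stanley--Reisner ideal of $\Ass{k}{n}$, the conjecture will follow from the same reasoning used for Corollary~\ref{coro:bases} via Theorem~\ref{thm:algebraic}.

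The natural candidate for $V$ is the Cayley--Menger variety $\mathrm{CM}_{2k}(n)\subset\K^{\bnn}$: the Zariski closure of the image of the map $(\K^{2k})^n\to\K^{\bnn}$ sending $(p_1,\dots,p_n)\mapsto(\|p_i-p_j\|^2)_{\{i,j\}\in\bnn}$. By construction its algebraic matroid is exactly the generic bar-and-joint rigidity matroid in dimension $2k$, and its defining ideal is generated by the Cayley--Menger determinants on $(2k+2)$-subsets $U\subset[n]$; these play the role of the Pfaffians of degree $k+1$ in our current setup, since in both cases it is the combinatorics of $(2k+2)$-point configurations that controls everything. Equivalently, via the Gram-matrix parametrization one may exchange $\mathrm{CM}_{2k}(n)$ for the variety of symmetric $n\times n$ matrices of rank $\le 2k$, whose ideal is cut out by $(2k+1)\times(2k+1)$ minors. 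The concrete steps would then be: (a) for each $U$, identify the leading monomial of the Cayley--Menger determinant (or symmetric minor) of $U$ under a generic fp-positive weight and show it is supported on the $(k+1)$-crossing of $U$; (b) verify via an analog of the $S$-polynomial argument in the proof of Theorem~\ref{thm:groebner} that these polynomials form a Gr\"obner basis; (c) deduce the initial ideal description and conclude exactly as in Corollary~\ref{coro:groebner}.

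The main obstacle is step (a). Cayley--Menger determinants and symmetric minors do not admit the matching-indexed expansion that makes Pfaffians so well-behaved: a Pfaffian of degree $k+1$ has exactly $(2k+1)!!$ monomials, indexed by perfect matchings of $[2k+2]$, and Proposition~\ref{prop:fpp}(5) pinpoints the crossing one as the leading monomial under any fp-positive weight. For Cayley--Menger determinants or general symmetric minors the expansion is far larger, and there is no obvious reason for the leading monomial under an arbitrary $v\in\Sep_n$ to correspond to a $(k+1)$-crossing; moreover, the Buchberger reduction in the proof of Theorem~\ref{thm:groebner} relies on edge-swap moves between matchings, a feature very specific to the Pfaffian setting. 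One possible way around this is to apply the Sturmfels--Sullivant secant-ideal framework~\cite{StuSul}: both $\Pf{k}{n}$ and the symmetric rank-$\le 2k$ variety are $k$-th secants (of $\Gr{2}{n}$ and of the Veronese, respectively), and a ``delightful'' monomial order on the symmetric side---if one can be found whose cone contains an fp-positive subcone---would transfer the content of Theorem~\ref{thm:groebner} automatically. A completely different escape is a direct flip-based combinatorial induction, using connectedness of the flip graph of $\Ass{k}{n}$ together with local rigidity-preserving moves between adjacent $k$-triangulations, which would avoid having to analyze $I(\mathrm{CM}_{2k}(n))$ globally.
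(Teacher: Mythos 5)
This statement is a \emph{conjecture} in the paper, not a theorem: the authors attribute it to Pilaud and Santos and explicitly leave it open. What the paper actually proves (Corollary~\ref{cor:hyperconnectivity}) is the analogous statement for the generic \emph{hyperconnectivity} matroid $\HH_{2k}$, via the algebraic matroid of $\Pf{k}{n}$; they then observe that this would \emph{imply} Conjecture~\ref{cor:bar-and-joint} if the known open conjecture that $\RR_d$ is freer than $\HH_d$ were settled, and that both would follow from establishing the basis property for points on the moment curve where $\HH_d(\p)=\RR_d(\p)$ (Theorem~\ref{thm:moment_curve}). So the paper offers no proof for you to be compared against.

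Your sketch is honest about this: it is a strategy outline, not a proof, and you correctly identify the fatal obstruction. The Gr\"obner argument of Theorem~\ref{thm:groebner} leans entirely on the Pfaffian's matching expansion, which gives (i) a clean bijection between monomials of a degree-$(k+1)$ generator and perfect matchings of a $(2k+2)$-set, (ii) Proposition~\ref{prop:fpp}(5) to single out the $(k+1)$-crossing as leading monomial under any fp-positive weight, and (iii) swap moves between matchings that drive the $S$-polynomial reduction. None of (i)--(iii) has a known analogue for Cayley--Menger determinants or $(2k+1)\times(2k+1)$ symmetric minors: their supports are much larger and not indexed by matchings, so there is no reason a generic $v\in\Sep_n$ puts the lead on a $(k+1)$-crossing monomial, and the ``delightful order'' device of Sturmfels--Sullivant is also not known to apply here (the paper notes in Remark~\ref{rem:groebnerpfaffians} that even their own fp-positive orders for Pfaffians fail to be delightful). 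Your alternative flip-based escape likewise is not carried out. In short: the gap you flag at step (a) is genuine and is precisely why the paper states this as a conjecture rather than deriving it alongside Corollary~\ref{cor:hyperconnectivity}. A productive refinement of your outline would be to target the moment-curve route suggested by Theorem~\ref{thm:moment_curve}, since there the two matroids coincide and a single argument would settle both.
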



Let us denote $\RR_d$ the generic bar-and-joint rigidity matroid.
It is known that hyperconnectivity falls under the framework of rigidity theory in the sense that both $\RR_d$ and $\HH_d$ are \emph{abstract rigidity matroid} as defined by Edmonds; matroids of rank $dn-\binom{d+1}{2}$ on the ground set $\bnn$ with the property that every complete graph on $d+1$ elements is independent. It is conjectured that $\RR_d$ is freer than $\HH$, which would make Proposition~\ref{cor:hyperconnectivity} imply Conjecture~\ref{cor:bar-and-joint}, but the conjecture is open starting at dimension $3$. (For $d=1$ both matroids coincide with the usual graphical matroid of the complete graph; for $d=2$ there are combinatorial characterizations of independent graphs in both: Laman graphs in $\RR_2$, and the graphs described in \cite{Bernstein} in $\HH_2$).

It is known, however, that for points chosen along the moment curve the two matroids coincide:

\begin{theorem}[\cite{CreSan}]
\label{thm:moment_curve}
	Let $d$ be a positive integer and let $t_1,\ldots,t_n\in \R$ be (distinct) real numbers. Let $\p=(\p_1,\dots,\p_n)\subset \R^d$ be the corresponding configuration of points along the moment curve, so that  $\p_i=(t_i,\dots,t_i^{d})$, $i=1,\dots,n$. Then,
	 $\HH_d(\p) = \RR_d(\p)$.
	\end{theorem}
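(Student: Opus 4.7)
The plan is to prove the equality $\HH_d(\p)=\RR_d(\p)$ at moment-curve points by parametrizing their infinitesimal flex spaces in terms of univariate polynomials, and then exhibiting an explicit linear bijection between them. Since both matroids are represented as row matroids of explicit matrices $R(\p)$ and $H(\p)$ with rows indexed by $\bnn$, it suffices to show $\dim\ker R(\p)|_G = \dim\ker H(\p)|_G$ for every subgraph $G\subseteq\bnn$.

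First I would rewrite the kernels in polynomial language, which is natural given $\p_i=(t_i,t_i^2,\dots,t_i^d)$. To $(u_i)\in\R^{dn}$ associate $g_i(t)=\sum_{k=1}^d u_{i,k}t^k$, a polynomial of degree $\leq d$ with $g_i(0)=0$. Then $u_j\cdot\p_i = g_j(t_i)$, so the rigidity condition $(u_i-u_j)\cdot(\p_i-\p_j)=0$ for $\{i,j\}\in G$ becomes
\[
g_i(t_i)+g_j(t_j) = g_i(t_j)+g_j(t_i).
\]
Setting $h_i(t):=g_i(t)-g_i(t_i)$ gives $h_i(t_i)=0$ and rewrites the condition as $h_i(t_j)+h_j(t_i)=0$. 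Factoring $h_i(t)=(t-t_i)q_i(t)$ with $\deg q_i\leq d-1$, and using $t_i\neq t_j$, this collapses to the symmetric identity $q_i(t_j)=q_j(t_i)$ for every $\{i,j\}\in G$. Analogously, writing $f_i(t)=\sum v_{i,k}t^k = t\cdot r_i(t)$ with $\deg r_i\leq d-1$, the hyperconnectivity condition $\p_j\cdot v_i=\p_i\cdot v_j$ becomes $t_j r_i(t_j) = t_i r_j(t_i)$.

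The key step is the linear map $\Phi\colon (q_i)\mapsto(r_i)$ defined by $r_i(t):=t_i\, q_i(t)$. Assuming every $t_i\neq 0$, $\Phi$ is a linear isomorphism of $(\R_{\leq d-1}[t])^n$, and if $q_i(t_j)=q_j(t_i)$ then
\[
t_j r_i(t_j) = t_it_j\, q_i(t_j) = t_it_j\, q_j(t_i) = t_i r_j(t_i),
\]
so $\Phi$ carries the rigidity flex space for $G$ isomorphically onto the hyperconnectivity flex space for $G$ (the inverse $r_i\mapsto r_i/t_i$ reverses it). Hence $\dim\ker R(\p)|_G = \dim\ker H(\p)|_G$ for every $G$, yielding $\HH_d(\p)=\RR_d(\p)$ whenever $0\notin\{t_1,\dots,t_n\}$.

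The main obstacle, and the technically delicate point, is the case where some $t_i=0$, since then $\Phi$ fails to be invertible. The cleanest route I would pursue is a reduction to the nonzero case by a parameter shift $t_i\mapsto t_i+a$: for rigidity this is harmless since $\RR_d$ is invariant under affine transformations of the ambient $\R^d$, and the induced change in $H(\p)$ is a specific low-rank perturbation whose effect on the matroid can be analyzed directly at moment-curve points. An alternative is a direct analysis of the degenerate flex spaces when $t_i=0$: the constraints from edges incident to $i$ degenerate in parallel on both sides, and a careful dimension count recovers the equality without needing to invert $\Phi$ in the missing coordinate.
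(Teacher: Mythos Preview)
The paper does not contain a proof of this theorem; it is quoted from the companion reference \cite{CreSan} and used as a black box in Section~\ref{sec:matroid}. So there is no in-paper argument to compare against.

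On its own merits, your polynomial reformulation is correct and natural for moment-curve configurations. The identification of the rigidity flex space with tuples $(q_i)_i$ in $\R_{\le d-1}[t]$ satisfying $q_i(t_j)=q_j(t_i)$, and of the hyperconnectivity flex space with tuples $(r_i)_i$ satisfying $t_j\,r_i(t_j)=t_i\,r_j(t_i)$, is accurate, and the diagonal map $r_i:=t_i\,q_i$ is a linear isomorphism between the two solution spaces whenever every $t_i\neq 0$. This already proves the statement for all configurations with $0\notin\{t_1,\dots,t_n\}$.

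The case where some $t_k=0$, which you flag yourself, is a genuine (if minor) gap in your write-up. Your first suggested fix does not go through as stated: the shift $t_i\mapsto t_i+a$ is indeed an affine reparametrization of the moment curve and hence preserves $\RR_d(\p)$, but $H(\p)$ is \emph{not} translation-invariant, and a ``low-rank perturbation'' of a representing matrix can perfectly well change the underlying linear matroid, so the claim that $\HH_d(\p)$ is unchanged requires an actual argument you have not supplied. Your second suggested fix (a direct dimension count at $t_k=0$) is feasible --- the edge constraints incident to $k$ become $r_k(t_j)=0$ on the hyperconnectivity side and $q_k(t_j)=q_j(0)$ on the rigidity side, and one can match these by hand --- but you have not carried it out. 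As written, then, you have proved the theorem only on the dense open set where all $t_i$ are nonzero.
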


In particular, a statement that would imply both Proposition~\ref{cor:hyperconnectivity} and Conjecture~\ref{cor:bar-and-joint} is: $k$-triangulations are bases for the matroid $\HH_d(\p) = \RR_d(\p)$ when $\p$ is a generic collection of points along the moment curve. We refer to~\cite{CreSan} and the references in there for an up-to-date account of the relation between $\HH_d$ and $\RR_d$.

\section{The tropicalization of $\Pf{k}{n}$}
\label{sec:Vn}

\subsection{The tropical Pfaffian  variety and prevariety}

Recall that the \emph{tropical hypersurface} $\trop(f)$ of a polynomial $f\in \K[x_1,\dots,x_N]$ is the collection of weight vectors $v\in \R^{N}$ for which $\ini_v(f)$ is not a monomial. 
Put differently, the weight vectors for which the maximum weight among monomials in $f$ is attained at least twice. 
It is a polyhedral fan, namely the codimension one skeleton of the normal fan of the Newton polytope of $f$. 

If $V$ is the algebraic variety of an ideal $I$, the
\emph{tropicalization} of $V$ equals
\[
\trop(V) := \cap_{f\in I} \trop(f).
\]
A finite subset $B\subset I$ such that $\trop(V) := \cap_{f\in B} \trop(f)$, which always exists, is called a \emph{tropical basis of $I$}. Not every generating set of $I$ (not even a universal Gr\"obner basis of $I$, see~\cite[Example 10]{BJSST} or~\cite[Example 2.6.1]{MacStu}) is a tropical basis. In general, a finite intersection of tropical hypersurfaces is called a \emph{tropical prevariety}, while the tropicalization of a variety is a \emph{tropical variety}~\cite[Definitions 3.1.1 and 3.2.1]{MacStu}.
The tropical variety defined by a finite set of polynomials $\{f_1,\dots,f_n\}$ contains, but is sometimes not equal to, the tropical variety of the ideal $(f_1,\dots,f_n)$ generated by them.

Looking at the case of Pfaffians,
for each subset $U$ of $[n]$ of size $2k+2$ we have as tropical hypersurface the set of vectors $v\in \R^{\bnn}$ for which the maximum
\[
\Big\{\sum_{\{i,j\}\in E}v_{ij}:E\text{ matching in }U\Big\},
\]
is attained at least twice.
We denote by $\PV{k}{n}$ the intersection of all these tropical hypersurfaces for the different $U\in \binom{[n]}{2k}$. We call it the \emph{tropical Pfaffian prevariety}. It 
contains the tropicalization $\trop(\Pf{k}{n})$ of $\Pf{k}{n}$ and it is known to coincide with it in the following cases:

\begin{itemize}
\item If $n=2k+2$, since then we have a single Pfaffian defining $\trop(\Pf{k}{n})$.
\item If $k=1$, by the results in \cite{SpeStu} and the fact that $\Pf{1}{n}$ coincides with the Grassmannian $\Gr2n$ (see Remark~\ref{rem:tree_metrics} below).
\end{itemize}

The following example looks at the first open case:

\begin{example}
For $k=2$ and $n=7$, using {\tt Gfan} \cite{Gfan} we have computed $\PV{2}{7}$ as the intersection of the seven hypersurfaces corresponding to Pfaffians. The result is a non-simplicial fan of pure dimension 18 with 77 rays and a lineality space of dimension 7 (as expected). It has 73395 maximal cones, all of them with multiplicity 1. These cones correspond to 33 classes of symmetry via permutations of variables.
The 77 rays are:
\begin{itemize}
    \item The 21 vectors in the standard basis of the coordinates $v$, and their 21 opposites. That is, for each $\{i,j\}\in \binom72$, the two vectors with $v_{ij} = \pm 1$ and  $v_{i'j'}=0$ otherwise. 
    \item The 35 vectors obtained as follows: for each $\{i,j,k\}\in \binom73$, the vector with $v_{ij} =v_{ik} =v_{jk} = 1$ and $v_{i'j'}=0$ otherwise. 
    \end{itemize}
7 of the 14 extremal rays of $\Sep_7$ are among these vectors. In the $w$ coordinates these are the vectors with $w_{ij} =1$ and all other entries equal to zero, for the fourteen choices of non-consecutive $i$ and $j$. The seven with $i=j-2$ coincide (modulo the lineality space) with the $v$-basis vectors with $v_{j-1,j} = - 1$, which are rays, and the seven with $i=j-3$ are the vectors with $v_{j-2,j} =v_{j-1,j} =v_{j-2,j-1} = - 1$, that is, the opposites to some rays, but they are not rays themselves.
None of the other 77 rays computed by {\tt Gfan} lie in $\Sep_7$.

The cone corresponding to a given 2-triangulation cannot be in this prevariety, because its rays are not among those rays. But it can be the result of intersecting a cone from the prevariety with $\Sep_7$, because, by Remark \ref{rem:cone}, the Gr\"obner cone in which it is contained is a bit greater than $\Sep_7$. In fact, a $v$ coming from a 2-triangulation is in the cone spanned by the rays $v_{j-1,j}=-1$ for all $j$ and $v_{j-2,j}=-1$ for $\{j-3,j\}$ in the 2-triangulation. The intersection of this cone with $\Sep_7$ is the cone in the 2-associahedron.

In this case, we want to check whether the tropical prevariety $\PV{2}{7}$ coincides with the variety $\trop(\Pf{2}{7})$. To do that, we need to compute the tropical variety as a subfan of the Gröbner fan. However, it is not enough to check that the cones in both fans are the same, because the tropical prevariety may not be a subfan of the Gröbner fan.

$\trop(\Pf{2}{7})$ is a simplicial fan with 84420 cones, that belong to 35 equivalence classes. The equality as sets for the two fans can now be checked by showing that all the simplicial cones in $\trop(\Pf{2}{7})$ are contained in a cone of $\PV{2}{7}$ and the union of the cones contained in the same cone gives the whole cone.

The prevariety contains 71820 simplicial and 1575 non-simplicial cones. The simplicial ones are also cones of the variety, so that part is correct. Now there are 12600 remaining cones in the variety, that correspond to the non-simplicial part. The non-simplicial cones can be triangulated in two ways: in 8 cones and in 3 cones. The triangulation in 8 cones of all them can be shown to match exactly the cones of the variety, and we are done.
\end{example}

To better understand the difference between $\PV{k}{n}$ and $\trop(\Pf{k}{n})$ we are now going to relate them to two different notions of rank for a tropical matrix. For this, it is convenient to extend $\R$ to the \emph{tropical semiring} $\T:= \R\cup\{-\infty\}$, with the operations  $\max$ as ``addition'' and $+$ as ``multiplication''. By a tropical $n_1\times n_2$-matrix we mean an $n_1\times n_2$-matrix with entries in $\T$. To distinguish between tropical (pre)-varieties in $\R^n$ and $\T^n$ we denote $\overline V$ the extension to $\T^n$ of a tropical variety or prevariety $V\in \R^n$. 

Clearly, for every family $F$ of polynomials, the prevariety of $F$ in $\T^n$ is topologically closed, so it contains the closure of the prevariety in $\R^n$, and the same holds for varieties. The converse is not always true, as the following example shows:

\begin{example}
Let $I= (x_1 x_3+x_2, x_2 x_3+x_1)$. 
The tropical variety it defines in $\R^3$ equals $\{(a,a,0) : a \in \R\}$, while the variety it defines in $\T^3$ contains that plus the points $\{(-\infty,-\infty,b) : b \in \T\}$. 

Observe that this ideal is not prime, since it contains $x_1(x_3^2-1)$ but it does not contain any of its factors $x_1$, $x_3 + 1$ or $x_3 - 1$. We do not know whether for prime ideals it is always true that the closure of $V$ equals $\overline V$.
\end{example}


The following two  notions of  rank were  introduced in \cite{DSS}.

\begin{definition}[Tropical rank, \protect{\cite[Def. 5.3.3]{MacStu}}]
	A square matrix $M\in \T^{r\times r}$ is \emph{tropically singular} if the maximum in the tropical determinant
	\[
	\trop\det(M):=\max_{\sigma\in S_r}\sum_{i=1}^r m_{i\sigma(i)}
	\]
	is attained at least twice, and \emph{tropically regular} otherwise.

	The \emph{tropical rank} of a tropical matrix is the largest size of a tropically regular minor in it.
\end{definition}

Stated differently, the tropical rank of $M$ is the largest $r$ such that $M$ is not in the tropical prevariety of the $r\times r$ minors or, equivalently, the smallest $r$ such that $M$ is in the tropical prevariety of the $(r+1)\times (r+1)$ minors.

\begin{definition}[Kapranov rank, \protect{\cite[Def. 5.3.2]{MacStu}}]
Let $M\in \T^{n_1\times n_2}$ be a tropical matrix.
	The \emph{Kapranov rank} of $M$ over a valuated field $\K$ is the smallest rank of a lift of the matrix, that is, a matrix $\widetilde{M}\in\K$ such that the degree of $\widetilde{M}_{ij}$ is $m_{ij}$.
\end{definition}

The tropical variety of the $(r+1)\times(r+1)$ minors is the tropicalization of the (classical) variety of the matrices with rank at most $r$. Hence, the Kapranov rank is the smallest $r$ such that $M$ is in the tropical variety of the $(r+1)\times (r+1)$ minors, or the largest $r$ such that $M$ is not in the tropical variety of the $r\times r$ minors. 

Observe that the Kapranov rank of $M$ depends on the field $\K$ under consideration, while the tropical rank does not. The relation of the two notions of rank to the tropical variety and prevariety of minors readily shows that the Kapranov rank is greater or equal than the tropical rank \cite[Theorem 1.4]{DSS}. Two small examples where the two notions do not coincide appear in \cite[Section 7]{DSS} (a $7\times 7$ matrix of tropical rank three and Kapranov rank four) and  \cite{Shitov0} (a $6\times 6$ matrix of tropical rank four and Kapranov rank five). The two examples are reproduced in \cite[Section 4]{Shitov2} where Shitov, completing work of Develin-Santos-Sturmfels \cite{DSS}, Chan-Jensen-Rubei \cite{CJR}, and himself \cite{Shitov1} shows that these two examples are the smallest possible:

\begin{lemma}[\cite{Shitov2}]
For given positive integers $r,n_1,n_2$ the following are equivalent:
\begin{enumerate}
\item The $(r+1)\times (r+1)$ minors are a tropical basis for the variety of $n_1\times n_2$ matrices of rank $r$ (over any of the complex, real, or rational fields).
\item $r\le 2$, or $r=\min\{n_1,n_2\}$, or \ $r=3$ and $\min\{n_1,n_2\}\le 6$.
\end{enumerate}
\end{lemma}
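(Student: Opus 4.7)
The plan is to prove the equivalence by splitting into the two implications and assembling the positive and negative results from the cited literature, tied together by a monotonicity (or ``padding'') argument that propagates counterexamples from small matrices to larger ones.

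For the direction (2)$\Rightarrow$(1), I would handle each of the three allowed regimes separately. The case $r=\min\{n_1,n_2\}$ is trivial since then every $n_1\times n_2$ tropical matrix has rank $\le r$ and there are no $(r+1)\times(r+1)$ minors to consider, so the empty set trivially cuts out the full space. The case $r\le 2$ is the theorem of Develin, Santos and Sturmfels~\cite{DSS}, which shows that for rank one the variety equals the prevariety of $2\times 2$ minors (tropical rank one is exactly Kapranov rank one) and for rank two a combinatorial argument on the Bergman fan structure identifies tropical and Kapranov rank. Finally, the case $r=3$ with $\min\{n_1,n_2\}\le 6$ is a combination of Chan--Jensen--Rubei~\cite{CJR}, which settles the square $6\times 6$ case, and Shitov~\cite{Shitov1,Shitov2}, which handles the rectangular subcases; I would simply cite these for the appropriate subrange.

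For the direction (1)$\Rightarrow$(2), I would produce an explicit counterexample for each pair $(r,n_1,n_2)$ outside the list in (2). The two minimal building blocks are the $7\times 7$ matrix of tropical rank $3$ and Kapranov rank $4$ from~\cite[Section 7]{DSS}, which excludes the case $r=3$, $\min\{n_1,n_2\}\ge 7$, and Shitov's $6\times 6$ matrix of tropical rank $4$ and Kapranov rank $5$ from~\cite{Shitov0}, which excludes $r\ge 4$ (subject to $r<\min\{n_1,n_2\}$). The key auxiliary lemma is a \emph{padding} statement: if $M$ is an $n_1\times n_2$ tropical matrix with tropical rank $r$ and Kapranov rank $>r$, then for any $n_1'\ge n_1$ and $n_2'\ge n_2$ one can build a matrix $M'$ of size $n_1'\times n_2'$ with tropical rank $r$ and Kapranov rank $>r$ (e.g.\ by appending rows and columns filled with $-\infty$ outside one generic entry, or by repeating a row/column, taking care that the chosen padding does not accidentally raise the tropical rank or lower the Kapranov rank). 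Applying this lemma to the two minimal examples produces witnesses for every $(r,n_1,n_2)$ outside the list.

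The main obstacle, and what makes this a genuine theorem rather than bookkeeping, is the sharp boundary at $r=3$, $\min\{n_1,n_2\}=6$ versus $7$: positive and negative results meet here, so the padding lemma must be set up carefully so that it never turns a $6\times 6$ rank-$3$ example (where tropical and Kapranov rank agree) into something smaller than the $7\times 7$ counterexample. A secondary subtlety is the dependence of Kapranov rank on the field; one would need to verify that the Develin--Santos--Sturmfels and Shitov witnesses remain witnesses over $\mathbb{Q}$, $\mathbb{R}$ and $\mathbb{C}$ simultaneously, which is done in each case by exhibiting an explicit rational obstruction to a low-rank lift.
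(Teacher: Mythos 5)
This lemma is quoted from Shitov~\cite{Shitov2} and carries no proof in the paper, so there is no argument of the paper's own to compare yours against. Your reconstruction of how the equivalence should be proved by assembling the cited literature has the right architecture: split the equivalence into two implications, derive the positive direction from the known classification results, and derive the negative direction from the two minimal counterexamples together with a lemma that propagates them to all larger parameter values.

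Two details would need repair before this is a proof rather than a sketch. In the positive direction your attributions are tangled: Chan--Jensen--Rubei proved the case $r=3$, $\min\{n_1,n_2\}=5$ (their paper is about $4\times4$ minors of $5\times n$ matrices), and it was Shitov himself who closed the boundary case $\min\{n_1,n_2\}=6$; the square $6\times 6$ case is not in~\cite{CJR}, and you also need to account explicitly for $\min\{n_1,n_2\}\in\{4,5\}$ when $r=3$. In the negative direction your ``padding lemma'' conflates two operations that do different jobs. Appending a new row and a new column that are $-\infty$ except for a single generic entry at their intersection is a block direct sum with a $1\times1$ block; it raises \emph{both} tropical and Kapranov rank by one, and this is what you need to propagate Shitov's $6\times6$ tropical-rank-$4$ / Kapranov-rank-$5$ witness to every $r\ge4$. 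Duplicating an existing row or column (or appending a $-\infty$ row with a single generic entry placed in the existing column support) raises $n_1$ or $n_2$ while leaving both ranks fixed; this is what enlarges the ambient size at a given $r$. Your caution that the padding should ``not accidentally raise the tropical rank'' applies only to the second operation and is the opposite of what is wanted in the first; separating the two constructions and verifying the rank behaviour of each is exactly the bookkeeping that Shitov carries out.
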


Since these notions of rank distinguish between the variety and prevariety of minors, antisymmetric versions of them will distinguish between the variety and prevariety of Pfaffians. (The same idea for the \emph{symmetric} case is explored in \cite{Zwick}).

	Let $M\in \R^{n_1\times n_2}$ be a tropical matrix and let $n=n_1+n_2$. Let $K\in \R$ be a sufficiently big constant. From $M$ and $K$ we construct the following $n\times n$ matrix:
	\[
	\Skew(M,K) :=
	\left(
	\begin{matrix}
		N_1 & M \\ M^t & N_2
	\end{matrix}
	\right)
	\in \T^{n\times n},
	\]
	where $(N_1)_{ij}=m_{i1}+m_{j1}-K$ and $(N_2)_{ij}=m_{1i}+m_{1j}-K$ for $i\ne j$, and $(N_1)_{ii}=(N_2)_{ii}=-\infty$.
	We have a corresponding vector $v(M,K) \in \R^{\bnn}$ of entries of $\Skew(M,K)$:
	\[
	v_{ij}:=
	\begin{cases}
		m_{i,j-n_1} &\text{ if } 1\le i \le n_1 < j. \\
		m_{i1}+m_{j1}-K &\text{ if } 1\le i,j \le n_1. \\
		m_{1,i-n_1}+m_{1,j-n_1}-K &\text{ if } i,j>n_1.
	\end{cases}
	\]
	We also consider the matrix and vector $\Skew(M,\infty)$ and $v(M,\infty) \in \T^{\bnn}$ obtained using $\infty$ instead of $K$. That is:
		\[
	\Skew(M,\infty) :=
	\left(
	\begin{matrix}
		-\infty & M \\ M^t & -\infty
	\end{matrix}
	\right)
	\in \T^{n\times n},
	\]

	
	\begin{lemma}
	\label{lemma:ranks}
		Let $M\in \T^{n_1\times n_2}$ be a tropical matrix and $K\in \overline \R$. For the vector $v(M,K)\in \T^\bnn$  defined above we have:
		\begin{enumerate}
			\item For  $K$ sufficiently large, $v(M,K) \in \PV{k}{n}$ if and only if the tropical rank of $M$ is at most $k$.
			\item $v(M,\infty) \in  \trop(\Pf{k}{n})$  if and only if  the Kapranov rank of $M$ is at most $k$.
		\end{enumerate}
	\end{lemma}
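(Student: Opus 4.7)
The plan is to exploit the fact that $\Skew(M,K)$ is designed so that its Pfaffian submatrices correspond to (tropical) minors of $M$. Writing $[n]=[n_1]\sqcup\{n_1+1,\dots,n\}$, a matching of a subset $U\subset[n]$ decomposes into \emph{cross edges}, each contributing an entry of $M$, and \emph{within-block edges}, each carrying a $-K$ together with degenerate $m_{i,1}$ or $m_{1,j}$ terms. Part (2) will follow from this correspondence almost immediately: if the Kapranov rank of $M$ is $\le k$, a lift $\widetilde M\in\K^{n_1\times n_2}$ of $M$ of rank $\le k$ produces the antisymmetric matrix $N=\bigl(\begin{smallmatrix}0 & \widetilde M\\ -\widetilde M^{t}&0\end{smallmatrix}\bigr)$ of rank $2\rank\widetilde M\le 2k$, and $N$ tropicalizes to $v(M,\infty)$. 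Conversely, any lift of $v(M,\infty)$ in $\Pf{k}{n}$ must have zero within-block entries (since $v(M,\infty)$ has $-\infty$ there), so it must have this block form, and the classical identity $\rank N=2\rank\widetilde M$ then furnishes a lift of $M$ of rank $\le k$.

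For part (1), I fix $U\in\binom{[n]}{2k+2}$ and set $a:=|U\cap[n_1]|$, $b:=|U\cap[n_1+1,n]|$, $d:=(a-b)/2$. Every matching of $U$ has at least $|d|$ within-block edges on the majority side, and each extra pair reduces the weight by roughly $2K$, so for $K$ sufficiently large every optimal matching uses exactly $|d|$ within-block edges. When $d=0$, the optimum is a cross matching whose weight equals the tropical determinant of the $(k+1)\times(k+1)$ submatrix $M[U\cap[n_1],\,(U\cap[n_1+1,n])-n_1]$, and $v(M,K)$ lies in the Pfaffian hypersurface of $U$ iff this minor is tropically singular. Running over all $(k+1)\times(k+1)$ submatrices, this immediately yields the implication $v(M,K)\in\PV{k}{n}\Rightarrow$ tropical rank of $M$ is $\le k$. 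For the converse implication I must also rule out the Pfaffian hypersurfaces with $d\ne 0$. When $|d|\ge 2$, the $\ge 4$ vertices covered by within-block edges can be re-matched within their block without changing the total weight (each within-block edge contributes $m_{i,1}+m_{j,1}-K$, a function of its \emph{endpoints} only), so the maximum is automatically attained multiple times, regardless of $M$.

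The delicate case is $|d|=1$, say $a=k+2$. When $1\notin J:=(U\cap[n_1+1,n])-n_1$, I will rewrite the overall maximum as
\[
\max_{i_p\in I}\bigl[m_{i_p,1}+D_{i_p}\bigr]-K,\qquad D_{i_p}:=\trop\det M[I\setminus\{i_p\},\{1\}\cup J],
\]
where $I:=U\cap[n_1]$; the inner maximization over the second endpoint $i_q$ of the within-block edge and the cross matching $\sigma$ exactly unfolds the definition of the tropical determinant, with $i_q$ playing the role of the row assigned to column $1$. If the tropical rank of $M$ is $\le k$, then the $(k+1)\times(k+1)$ minor $M[I\setminus\{i_p^*\},\{1\}\cup J]$ is tropically singular at a maximizer $i_p^*$, so $D_{i_p^*}$ is attained by at least two distinct $(i_q,\sigma)$ pairs, giving two distinct matchings of $U$ with the same maximum weight. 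The subcase $1\in J$ is handled by a direct symmetry: if $i_r\in I$ is the cross partner of $1+n_1$ in some optimum, then exchanging the within-block edge $\{i_p,i_q\}$ with $\{i_p,i_r\}$ (and reassigning $i_q$ to be the cross partner of $1+n_1$) preserves the total weight. The case $d=-1$ is symmetric, using row $1$ of $M$ in place of its column $1$. This $|d|=1$ case is the main technical obstacle: the within-block swap used for $|d|\ge 2$ is unavailable, and one genuinely needs to recognize the Pfaffian maximization as a tropical determinant of a rectangular $(k+2)\times(k+1)$ submatrix of $M$ involving column $1$, plus the ad hoc argument for $1\in J$.
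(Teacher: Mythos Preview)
Your proof is correct and follows essentially the same route as the paper. The only difference is organizational: you classify the subset $U$ by the imbalance $d=(a-b)/2$ and then analyse optimal matchings, whereas the paper starts from a maximum-weight matching $E$ and classifies by the number of within-block edges in $E$ (zero, one, or at least two). The substantive arguments---identifying the balanced case with a tropical $(k{+}1)$-minor, the free within-block swap when there are two or more within-block edges, and the trick of adjoining column~$1$ (resp.\ row~$1$) of $M$ to form a $(k{+}1)\times(k{+}1)$ tropical minor in the $|d|=1$ case, together with the ad~hoc swap when $n_1{+}1\in U$---are identical in both proofs.
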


\begin{proof}
	For part (1), assume first that  $v(M,K) \in \PV{k}{n}$, and consider a $(k+1)\times (k+1)$ minor of $M$. This corresponds to a set  $U\in \binom{[n]}{2k+2}$ with half of the elements in $[1,\dots, n_1]$ and the other half in  $[n_1+1,\dots, n]$. Since  $v(M) \in \PV{k}{n}$, there are at least two perfect matchings in $U$ of maximum weight. Since we chose $K$ very big, none of these matchings come from the $N_1$ or $N_2$ parts of  $\Skew(M,K)$. This implies that the minor of $M$ that we started with is tropically singular.

	Conversely, assume that $\trop \rank M \le k$. Let $U\in \binom{[n]}{2k+2}$ and consider a perfect matching $E$ in $U$ with maximal weight, which is a term in the Pfaffian of $U$. We have three cases:
	\begin{itemize}
		\item If all the edges in $E$ are between $[n_1]$ and $[n_1+1,n]$, $E$ corresponds to a permutation in $M$ attaining the tropical determinant. As $\trop \rank M \le k$, there must be another permutation with the same weight.
		
		\item If all the edges in $E$ except one are between $[n_1]$ and $[n_1+1,n]$, suppose $E=\{\{i_1,j_1\},\dots,\{i_{k+1},j_{k+1}\}\}$, and $i_1,\dots,i_{k+1},j_1\le n_1<j_2,\dots,j_{k+1}$ (the other case is symmetric). Then
		\[
		w(E)=v_{i_1j_1}+\dots+v_{i_{k+1}j_{k+1}}=m_{i_11}+m_{j_11}+m_{i_2,j_2-n_1}+\ldots+m_{i_{k+1},j_{k+1}-n_1}-K.
		\]
		
		We have now two cases:
		\begin{itemize}
			\item If $j_l=n_1+1$ for some $l$, for example $j_2=n_1+1$, then
			\[
			w(E)=v_{i_1i_2}+v_{j_1,n_1+1}+v_{i_3j_3}+\ldots+v_{i_{k+1}j_{k+1}}.
			\]
			
			\item If $j_l>n_1+1$ for all $l$, $w(E)-m_{j_11}+K$ is the weight of the permutation $\{i_1,1\}, \{i_2,j_2-n_1\},\dots,\{i_{k+1},j_{k+1}-n_1\}$ in $M$. Since the tropical rank of $M$ is smaller than $k+1$, there is another permutation 
			with weight greater or equal than $w(E)-m_{j_11}+K$. That is,
			\[
			w(E)-m_{j_11}+K\le m_{i_1'1}+m_{i_2',j_2-n_1}+\ldots+m_{i_{k+1}',j_{k+1}-n_1}
			\]
			where $(i_1',\dots,i_{k+1}')$ is a permutation of $(i_1,\dots,i_{k+1})$. Equivalently
			\begin{align*}
			w(E)\le &(m_{i_1'1}+m_{j_11}-K) + m_{i_2',j_2-n_1}+\ldots+m_{i_{k+1}',j_{k+1}-n_1} =
			\\
			=& v_{i_1'j_1}+v_{i_2'j_2}+\ldots+v_{i_{k+1}'j_{k+1}}.
			\end{align*}

			As $E$ is maximal, this is an equality, and we have another matching in $U$ with the same weight.
		\end{itemize}
	
		\item If there is more than one edge inside $[n_1]$ or inside $[n_1+1,n]$, suppose for example we have the edges $\{a,b\}$ and $\{c,d\}$ with $a,b,c,d\le n_1$. Then any of the two swaps among these four elements preserves weight, indeed:
		\[
		v_{a,b} + v_{c,d} =m_{a1}+m_{b1}+m_{c1} + m_{d1} -2K= v_{a,c} + v_{b,d} = v_{a,d} + v_{b,c}
		\]		
	\end{itemize}
	
	In any case, there is another matching with the same weight as $E$, and this finishes part (1).
	
	For part (2),  if $M$ has Kapranov rank at most $k$ then there is a lift $\widetilde{M}$ of $M$ of rank $k$. Thus,
	\[
	\left(
	\begin{matrix}
		0 & \widetilde M \\ \widetilde M^t & 0
	\end{matrix}
	\right)
	\]
	is an antisymmetric  lift of $\Skew(M,\infty)$ of rank $2k$.
	
	Conversely,
	 if $v(M,\infty)\in  \trop(\Pf{k}{n})$, consider  an antisymmetric matrix in $\Pf{k}{n}$
	 projecting to it, hence of rank $2k$. This matrix necessarily has zero entries in the places where $v(M,\infty)$ has $-\infty$, so it is of the form 
	 	\[
	\left(
	\begin{matrix}
		0 & \widetilde M \\ \widetilde M^t & 0
	\end{matrix}
	\right),
	\]
	where $\widetilde{M}$   is a matrix of rank at most $k$ and projecting to $M$.
\end{proof}

\begin{theorem}
	\label{thm:ranks}
If there is a matrix $M\in \R^{n_1\times n_2}$ of tropical rank $\le k$ and Kapranov rank $>k$ then 
$\PV{k}{n}\ne \trop(\Pf{k}{n})$, where $n=n_1+n_2$.

This happens, for example, for $k=3$ and any $n\ge 14$ and for any $k\ge 4$ and $n\ge 2k+4$.
%
\end{theorem}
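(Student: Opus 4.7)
The plan is to use Lemma~\ref{lemma:ranks} as the main engine. Given a matrix $M \in \R^{n_1 \times n_2}$ of tropical rank $\le k$ and Kapranov rank $> k$, consider the one-parameter family of weight vectors $v(M,K) \in \R^\bnn$. By Lemma~\ref{lemma:ranks}(1), there is a threshold $K_0$ such that $v(M,K) \in \PV{k}{n}$ for every $K > K_0$. The remaining task is to show that $v(M,K) \notin \trop(\Pf{k}{n})$ for all sufficiently large $K$; combining the two, any such $K$ produces an element of $\PV{k}{n} \setminus \trop(\Pf{k}{n})$, which is what we need since the inclusion $\trop(\Pf{k}{n}) \subseteq \PV{k}{n}$ is automatic.

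For the second assertion we argue by contradiction. Suppose there is a sequence $K_i \to \infty$ with $v(M,K_i) \in \trop(\Pf{k}{n})$. From the explicit formula defining $v(M,K)$, the vectors $v(M,K_i)$ converge coordinate-wise in $\T^\bnn$ to $v(M,\infty)$, so $v(M,\infty)$ lies in the topological closure of $\trop(\Pf{k}{n})$ inside $\T^\bnn$. As observed at the beginning of Section~\ref{sec:Vn}, that closure is contained in the $\T$-extension of $\trop(\Pf{k}{n})$. Hence Lemma~\ref{lemma:ranks}(2) forces the Kapranov rank of $M$ to be at most $k$, contradicting the hypothesis.

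For the concrete pairs $(k,n)$, we would invoke the known witnesses and then enlarge them. The $7\times 7$ matrix of Develin--Santos--Sturmfels has tropical rank $3$ and Kapranov rank $4$, giving the base case $k=3$, $n=14$; Shitov's $6\times 6$ matrix has tropical rank $4$ and Kapranov rank $5$, giving $k=4$, $n=12$. To extend to every $n \ge 14$ for $k=3$ and every $n\ge 2k+4$ for $k=4$, we would duplicate rows or columns of the corresponding witness: any minor containing a repeated row or column is tropically singular and classically rank-deficient, so neither the tropical nor the Kapranov rank increases. To cover $k \ge 5$, we would take a block-diagonal sum of Shitov's matrix with a sufficiently generic square block of size $k-4$ whose tropical and Kapranov ranks both equal its size; provided the off-diagonal blocks are set sufficiently negative, maximum-weight permutations split between the two diagonal blocks, so both tropical and Kapranov ranks add and the rank gap is preserved.

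The main obstacle is the limit argument in the second paragraph, which requires passing from an $\R$-valued statement to a $\T$-valued one; the key input is the containment of the topological closure of $\trop(\Pf{k}{n})$ in its $\T$-extension, which is supplied by the discussion in the paper. A secondary technical point is the verification that the block-diagonal padding preserves the rank gap, since tropical rank is not in general additive under direct sums and one must choose the off-diagonal entries negative enough that they play no role in the maximum-weight permutations entering the tropical determinants of interest.
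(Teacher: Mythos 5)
Your main argument is essentially the same as the paper's. Both apply Lemma~\ref{lemma:ranks}(1) to place $v(M,K)$ in $\PV{k}{n}$ for all large $K$, then use Lemma~\ref{lemma:ranks}(2) together with the observation that the $\T$-extension $\overline{\trop(\Pf{k}{n})}$ contains the topological closure of $\trop(\Pf{k}{n})$ to conclude that $v(M,\infty)$ is outside that closure, so some large $K$ has $v(M,K)\notin\trop(\Pf{k}{n})$. Your phrasing as a contradiction (no sequence $K_i\to\infty$ with $v(M,K_i)\in\trop(\Pf{k}{n})$) is logically equivalent; you actually establish the slightly stronger statement that $v(M,K)\notin\trop(\Pf{k}{n})$ for \emph{all} large $K$, which is fine but more than is needed.

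Where you diverge from the paper is in the treatment of the explicit pairs $(k,n)$. The paper simply reads these off from the lemma of Shitov stated immediately beforehand, which characterizes exactly when the $(k+1)\times(k+1)$ minors of $n_1\times n_2$ matrices fail to be a tropical basis; that failure is equivalent to the existence of a witness matrix, and no explicit construction is required. You instead propose to build witnesses by hand: start from the Develin--Santos--Sturmfels $7\times 7$ matrix (tropical rank $3$, Kapranov rank $4$) and Shitov's $6\times 6$ matrix (tropical rank $4$, Kapranov rank $5$), duplicate rows or columns to increase $n$, and take block sums to increase $k$. The row/column duplication step is fine: a duplicated row changes neither the tropical nor the Kapranov rank. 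The block-sum step is genuinely delicate, and your ``sufficiently negative off-diagonal entries'' heuristic is not enough as stated. For a minor with an unbalanced split of rows and columns between the two blocks, \emph{no} permutation is confined to the diagonal blocks, and if the off-diagonal entries are generic (even if very negative) the maximum-weight permutation can be unique, producing a tropically regular minor of size strictly larger than the sum of tropical ranks. One must take the off-diagonal entries \emph{all equal} (not generic), and then one still has to argue carefully that every oversized minor is tropically singular, and separately that the Kapranov rank of the block sum is at least the sum of the Kapranov ranks. None of this is impossible, but it is far from automatic and you acknowledge only part of the difficulty. The cleaner route, which is the one the paper takes implicitly, is to bypass the construction entirely and quote Shitov's characterization to assert existence of $M$ for exactly the stated $(k,n)$.

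Finally, a small caution if you do pursue the explicit construction: check that your stated thresholds match Shitov's theorem. In the paper's statement of the lemma the second alternative reads $r=\min\{n_1,n_2\}$, but Shitov's actual condition is $r\ge\min\{n_1,n_2\}-1$ (equivalently $r+1\ge\min\{n_1,n_2\}$); with the former literal reading one would erroneously obtain witnesses for $n=2k+3$ when $k\ge 4$, which the paper explicitly leaves open. Make sure your constructed examples respect the corrected bound $\min\{n_1,n_2\}\ge k+2$ for $k\ge 4$.
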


\begin{proof}
Let $M\in \R^{n_1\times n_2}$ be a matrix of tropical rank $\le k$ and Kapranov rank $>k$. By Part (1) of Lemma~\ref{lemma:ranks} we have that $v(M,K) \in \PV{k}{n}$ for every sufficiently big $K$.

 Also, by Part (2) of the Lemma, $v(M,\infty) \not\in \overline{\trop (\Pf{k}{n})}$. 
 In particular, $v(M,\infty)$ is not in the closure of ${\trop (\Pf{k}{n})}$, 
 which implies it is not true that $v(M,K) \in \trop (\Pf{k}{n})$ for all sufficiently big $K$. 
 
 Thus, $\PV{k}{n} \ne  \trop (\Pf{k}{n})$.
\end{proof}

Summing up, the cases where we do not know whether $\PV{k}{n}= \trop(\Pf{k}{n})$ are:
\begin{itemize}
\item $k=2$ and $n\ge 8$,
\item $k=3$ and $n\in \{9,10,11,12,13\}$,
\item $k\ge4$ and $n = 2k+3$.
\end{itemize}

\subsection{The $k$-associahedron as the fp-positive part of the tropical Pfaffian  variety}

We are interested in the part of $\PV{k}{n}$ contained in $\Grob_k(n)$:

\begin{definition}
\label{defi:positive}
We define
\[
\PVplus{k}{n} := 
\PV{k}{n}\cap \Grob_k(n).
\]
We  call it the \emph{$(k+1)$-free part} of the tropical Pfaffian variety of parameters $n$ and $k$ for two reasons. On the one hand, the initial ideal corresponding to $\Grob_k(n)$ is the Stanley-Reisner ideal of the complex of $(k+1)$-free sets. But, more significantly, our results in this section say that $\PVplus{k}{n}$ coincides with the points of $\Grob_{k}(n)$ which, expressed in the $w$-coordinates, have $(k+1)$-free support.
\end{definition}

\begin{theorem}\label{thm:main}
Let $v=d(w)\in \Grob_k(n)$ be a vector in the Gr\"obner cone. This includes the case where $w$ is non-negative (or, equivalently, $v\in\Sep_n$).
    Then,
    \begin{enumerate}
        \item 
	$v\in\PVplus{k}{n}$ if and only if the support of $w$ is $(k+1)$-free. 
	
	\item If the above holds, then for every subset $U\subset \binom{[n]}2$ of size $2k+2$ one of the maximal matchings of $U$ for $v$ is the one producing a $(k+1)$-crossing, and a second one is obtained from it by a swap of two consecutive edges in the $(k+1)$-crossing.
    \end{enumerate}
\end{theorem}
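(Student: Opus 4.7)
The plan is to express the weights of matchings in the $w$-coordinates, identify the cost of each ``adjacent swap'' of consecutive crossing edges in $M_U$ as a sum of $w_{pq}$'s over a rectangle of edge-pairs, and then match the resulting combinatorial vanishing conditions with $(k+1)$-freeness of the support of $w$.

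First I would expand
\[
\mathrm{wt}_v(M) = \sum_{\{i,j\}\in\bnn} N_M(i,j)\, w_{ij},
\qquad
N_M(i,j) := \#\bigl\{\{a,b\} \in M : i\le a < j \le b < i \text{ cyclically}\bigr\},
\]
and show by a max-bipartite-matching bound that $N_{M_U}(i,j) \ge N_M(i,j)$ for every matching $M$ of $U$, with equality precisely when $M$ realizes a maximum bipartite matching between the arcs separated by $\{i,j\}$. Applied to the two non-crossing swaps of a pair $(e_\alpha, e_{\alpha+1})$ of consecutive crossing edges of $M_U$, the cost of each swap equals $2W^{(\alpha)}$, where $W^{(\alpha)}$ is the sum of $w_{pq}$ over a rectangle $R_\alpha = X\times Y$ of pairs defined by the two cyclic arcs between consecutive endpoints of $e_\alpha$ and $e_{\alpha+1}$. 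Since $v\in\Grob_k(n)$, each $W^{(\alpha)}\ge 0$; moreover, the ``small'' rectangle has the crossing edge $e_\alpha$ as a corner (and the ``large'' one has $e_{\alpha+1}$).

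Part~(1) then reduces to the combinatorial equivalence: $\mathrm{supp}(w)$ is $(k+1)$-free if and only if, for every $U$, at least one of the $2(k+1)$ adjacent-swap rectangle sums $W^{(\alpha)}$ vanishes. The easier direction is direct: if every $W^{(\alpha)}$ is strictly positive for some $U$, each rectangle $R_\alpha$ must contain some edge $f_\alpha$ with $w_{f_\alpha}>0$; the cyclic arrangement of the rectangles' antipodal arcs in $[n]$ then forces $\{f_1,\dots,f_{k+1}\}$ to be pairwise crossing, producing a $(k+1)$-crossing inside the positive support. Part~(2) falls out as an immediate byproduct of the ``if'' direction of this equivalence, since a zero rectangle sum exhibits an explicit consecutive-edge swap of $M_U$ preserving weight.

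The main obstacle is the other (``only if'') direction of the combinatorial equivalence. Assume $\mathrm{supp}(w)$ contains a $(k+1)$-crossing $C$ with vertex set $U$; since crossing edges have length $\ge k+1$, the long-edge inequalities of $\Grob_k(n)$ together with $e_\alpha \in \mathrm{supp}(w)$ force $w_{e_\alpha}>0$ for every $\alpha$. We must show that no adjacent-swap rectangle sum $W^{(\alpha)}$ for this $U$ can vanish, even though a priori a negative short-edge $w_{ij}$ inside $R_\alpha$ could cancel the positive corner contribution $w_{e_\alpha}$. I plan to block such cancellations using the explicit facet/ray description of $\Grob_k(n)$ from Theorem~\ref{thm:groebner-cone}: decomposing $v$ as a non-negative combination of the rays of $\Grob_k(n)$ plus lineality, I would analyze precisely which rays can contribute to $W^{(\alpha)}$ with a negative sign and bound those contributions via the short inequalities, thereby forcing $W^{(\alpha)}\ge w_{e_\alpha}>0$. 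The most delicate point is the boundary case of length-$(k+1)$ edges, which have no long ray of their own but appear in the $w$-supports of the short rays for length-$k$ edges; tracking these contributions carefully is where I expect the proof to be most intricate.
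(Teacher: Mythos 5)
Your overall strategy of passing to the $w$-coordinates and identifying the cost of consecutive-edge swaps with sums of $w_{pq}$ over rectangular regions is in the same spirit as the paper's proof, and your ``easier direction'' (if every $W^{(\alpha)}>0$ then the positive support contains a $(k+1)$-crossing) together with the derivation of part~(2) from a vanishing rectangle sum is essentially the paper's argument for the ``if'' direction: the paper shows directly that $(k+1)$-freeness of $\mathrm{supp}(w)$ forces some index $l$ for which no edge of $\mathrm{supp}(w)$ joins the two relevant gaps, and the corresponding consecutive-edge swap of $E_0$ then preserves weight.

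The gap is in the reduction of part~(1) to the ``combinatorial equivalence.'' The backward implication you need, namely
\[
v\in\PVplus{k}{n}\ \Longrightarrow\ \text{for every } U \text{ some } W^{(\alpha)}=0,
\]
is precisely part~(2) of the theorem, which you plan to obtain only as a later byproduct. Equivalently, for the ``only if'' direction of part~(1) you would produce a $U$ for which all $2(k+1)$ consecutive-swap costs are strictly positive; but that only rules out the consecutive-edge swaps of $E_0$ as alternative maxima. Membership in $\PV{k}{n}$ requires some second matching of $U$ to tie with $E_0$, and a priori that matching could differ from $E_0$ by a non-consecutive swap, or by a longer alternating cycle. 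Without first proving that ties, when they occur, always happen along a consecutive-edge swap (which is part~(2), and not at all immediate from $v\in\Grob_k(n)$ alone), your reduction does not yield $v\notin\PV{k}{n}$. The paper avoids this by arguing directly that, for the $U$ with one element in each of the $2k+2$ arcs cut out by the $(k+1)$-crossing in $\mathrm{supp}(w)$, the matching $E_0$ is the \emph{unique} maximum; from that, \emph{nothing} ties, not merely no consecutive swap.

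A secondary comment: the delicate boundary analysis you anticipate (tracking how length-$(k{+}1)$ edges appear in the supports of the short rays) is a real phenomenon, but even a successful completion of that analysis would only establish positivity of the $W^{(\alpha)}$, which, per the above, is not enough. You would do better to instead decompose $\mathrm{wt}(E_0)-\mathrm{wt}(M)$ over the generators of $\Grob_k(n)$ for an \emph{arbitrary} competing matching $M\neq E_0$ (not just consecutive swaps of $E_0$): both the long rays $W_e$ and the short rays $-V_{e'}$ contribute nonnegatively to this difference (the latter because no edge of length $\le k$ belongs to $E_0$), and for the paper's choice of $U$, each $e\in C$ is a diameter of $U$ and $M\neq E_0$ forces $c_M(e)<k+1$ for some $e\in C$; the remaining work is exactly to convert $w_e>0$ for that $e$ into a strictly positive contribution, which is where the length-$(k{+}1)$ boundary case you flag actually lives.
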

\begin{proof}

	Let $U=\{a_0,a_1,\ldots,a_{2k+1}\}$ written in cyclic order, and let $E_0$ be the $(k+1)$-crossing in it, that is, the matching that pairs $a_i$ with $a_{k+1+i}$. As we already know, the maximum weight given by $v$ to matchings of $U$ is attained at $E_0$.
	
	If the support of $w$ is $(k+1)$-free, there must be an $l$ such that no edge in the support of $w$ has an end between sides $a_l$ and $a_{l+1}$ and the other between $a_{l+k+1}$ and $a_{l+k+2}$. Then,  let $E_1=E_0\setminus\{\{a_l,a_{l+k+1}\},\{a_{l+1},a_{l+k+2}\}\}\cup\{\{a_l,a_{l+k+2}\},\{a_{l+1},a_{l+k+1}\}\}$ has the same weight as $E_0$, so that $v\in \PVplus{k}{n}$ and part (2) holds.
	
	Conversely, if the support of $w$ contains a $(k+1)$-crossing then there is a $U=\{a_0,a_1,\ldots,a_{2k+1}\}$ such that each $a_i$ lies in one of the $2k+2$ regions defined by that crossing, and then the matching $E_0$ of $U$ has weight strictly larger than any other matching. In particular, $v\not\in \PV{k}{n}$.
\end{proof}

We now want to show that $\PVplus{k}{n}$ is contained in $\trop(\Pf{k}{n})$. That is to say, even if the tropical Pfaffian variety and prevariety may not coincide, their ``$(k+1)$-free parts'' coincide. We need the following Lemma, the proof of which we postpone to Section~\ref{sec:balanced}:

\begin{lemma}
\label{lemma:balanced}
Let $v=d(w)\in \Grob_k(n)$ be sufficiently generic. Then, for every subset $U\in \binom{[n]}{2k+2}$ we have that $U$ has the same number of positive and negative matchings of maximum weight with respect to $v$.
\end{lemma}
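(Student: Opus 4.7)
My plan is to reduce the lemma to a combinatorial statement about matchings of $U$ and resolve it by constructing a parity-reversing involution on the set of maximum-weight matchings.

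First, I would re-express the weight $\sum_{(i,j)\in E}v_{ij}$ of any matching $E$ of $U$ as $\sum_e w_e\, c_E(e)$, where the sum runs over edges $e$ of the $n$-gon and $c_E(e)$ denotes the number of edges of $E$ crossing $e$; this identity follows from Definition~\ref{defi:d(w)} by swapping the order of summation. Letting $T:=\mathrm{supp}(w)$, only the $e\in T$ contribute. One then verifies the well-known fact that the $(k+1)$-crossing $E_0$ of $U$ attains $c_{E_0}(e)=\min(m_e, 2k+2-m_e)$ for every edge $e$ whatsoever, where $m_e$ is the number of vertices of $U$ on the smaller side of $e$, and that this is the maximum possible value of $c_M(e)$. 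Hence, when $w$ is strictly positive on $T$ (the generic case), the max-weight matchings of $U$ are exactly those $M$ with $c_M(e)=c_{E_0}(e)$ for every $e\in T$.

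The heart of the proof is then to pair these max-weight matchings by a sign-reversing involution. By Lemma~\ref{lema:swap}, any swap that removes or inserts a crossing pair of edges reverses parity. By Theorem~\ref{thm:main}(2), there exists a valid swap position $l$: no edge of $T$ joins the arc of $[n]$ strictly between $a_l$ and $a_{l+1}$ with the arc strictly between $a_{l+k+1}$ and $a_{l+k+2}$. My intent is to use such an $l$ to define, for every max-weight matching $M$, a canonical modification of $M$ on the four vertices $\{a_l,a_{l+1},a_{l+k+1},a_{l+k+2}\}$ that produces another max-weight matching of opposite parity, by switching the local pattern of $M$ on these four points to the unique one obtained by creating or removing a $2$-crossing. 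The validity of $l$ is precisely what should ensure that this local swap preserves $c_M(e)=c_{E_0}(e)$ for every $e\in T$.

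The main obstacle will be making the involution uniformly well-defined: in a general max-weight $M$, the four vertices $a_l,a_{l+1},a_{l+k+1},a_{l+k+2}$ need not be matched solely among themselves, so the local swap as described above is not literally available. I would try to overcome this by exploiting the rigidity of the constraints $c_M(e)=c_{E_0}(e)$ for $e\in T$, which should force the behavior of $M$ near these four vertices into one of finitely many cases on which an appropriate swap can be defined case by case; alternatively, one could allow $l$ to depend on $M$ and pick, say, the smallest cyclic valid $l$ for each $M$. As a fallback, I would consider an inductive argument on $|U|$ via the Pfaffian expansion $\mathrm{Pf}(U)=\sum_{i=1}^{2k+1}(-1)^{i-1}\,m_{a_0 a_i}\,\mathrm{Pf}(U\setminus\{a_0,a_i\})$, tracking the parities of max-weight contributions through the recursion.
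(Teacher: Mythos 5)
Your reduction to the combinatorial statement is exactly right: writing $\sum_{ij\in E}v_{ij}=\sum_{e}w_e\,c_E(e)$ and characterizing max-weight matchings as those $M$ with $c_M(e)=c_{E_0}(e)$ for all $e$ in $T:=\mathrm{supp}(w)$ is the same start as the paper (Lemma~\ref{lemma:length}). You also correctly guess the overall strategy (a sign-reversing involution) and correctly identify the obstruction: in a generic max-weight matching $M$, the four points $a_l,a_{l+1},a_{l+k+1},a_{l+k+2}$ are not matched among themselves, so the ``local swap'' you describe simply has no meaning. The gap is that none of your proposed repairs resolve this, and the actual resolution requires a different idea.

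Here is what the paper does instead, and why your fixes fall short. The involution should act on the two \emph{partners} of a fixed pair of consecutive elements $a,b\in U$ (whatever those partners happen to be in $M$), not on a prescribed quadruple. For this swap to (i) preserve max-weight and (ii) always flip sign, one needs two nontrivial facts. First (sign): the swap always creates or destroys a $2$-crossing; this holds provided $\{a,b\}$ itself is never an edge of a max-weight matching, so that the excluded local pattern is precisely the crossing-free one $\{a,b\},\{a',b'\}$. Second (weight): the swap must leave $c_M(e)$ unchanged for every $e\in T$. The paper reduces checking all $e\in T$ to checking only the maximal edges $T_U^{\max}$ of a partial order on $T_U$ (Lemma~\ref{lemma:maximal}), and then the weight is preserved iff $a$ and $b$ lie on the same side of every edge in $T_U^{\max}$. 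The existence of such a pair $(a,b)$ is not supplied by your $l$ from Theorem~\ref{thm:main}(2): that $l$ only guarantees no edge of $T$ crosses a certain diameter, which is strictly weaker than what is needed and may leave some $e\in T_U^{\max}$ separating $a$ from $b$, destroying (ii). The paper instead proves a structural ``accordion'' lemma for $k$-triangulations (Lemma~\ref{lemma:accordion}) to show (Lemma~\ref{lema:unused}) that some vertex $p$ of the collapsed $(2k+2)$-gon is entirely unused by $T_U^{\max}$; taking $a,b$ adjacent to $p$ gives (ii), and a separate argument (Lemma~\ref{lemma:unused2}) gives (i). Your fallback via the Pfaffian recursion would run into the same difficulty in a different guise: when you delete $a_0$ and its partner, the max-weight stratum of the smaller Pfaffian does not coincide term-by-term with the max-weight contributions in the larger one, so the parity count does not propagate.

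So the proof is incomplete: the involution is not well-defined as stated, your case analysis suggestion does not supply the needed invariant, and the essential combinatorial input (unused vertex in $T_U^{\max}$, via accordions and the partial order on $T_U$) is missing.
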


\begin{corollary}
    \label{cor:balanced}
    $\PVplus{k}{n}  \subset \trop(\Pf{k}{n})$. Moreover, $\PVplus{k}{n} \subset \trop^+(\Pf{k}{n})$.
\end{corollary}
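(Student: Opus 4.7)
The plan is to prove the stronger containment $\PVplus{k}{n} \subset \trop^+(\Pf{k}{n})$ directly, since $\trop^+(\Pf{k}{n}) \subset \trop(\Pf{k}{n})$ follows immediately from the definitions: any monomial $c\cdot x^\alpha$ lying in $\ini_v(I)$ rescales to a single-term all-positive polynomial $x^\alpha \in \ini_v(I)$. So fix $v \in \PVplus{k}{n}$, and aim to show $\ini_v(I_k(n))$ contains no polynomial with all coefficients real and positive.

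The first step is to handle each Pfaffian individually. By Theorem~\ref{thm:main}, for every $U \in \binom{[n]}{2k+2}$ the set of maximum $v$-weight matchings in $U$ contains both the $(k+1)$-crossing and at least one swap of two of its consecutive edges. Invoking Lemma~\ref{lemma:balanced} (applied to a generic refinement of $v$ on the same face of $\Grob_k(n)$, then specialized back to $v$), this set of maximum matchings splits evenly between even and odd parities. Since $\mathrm{Pf}_U = \sum_E s(E) \prod_{\{i,j\}\in E} x_{ij}$ with $s(E)=\pm1$ according to parity, $\ini_v(\mathrm{Pf}_U)$ has equally many $+1$ as $-1$ coefficient terms, so it is neither a monomial nor an all-positive polynomial. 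Geometrically, $v$ already lies in $\trop^+$ of each individual Pfaffian tropical hypersurface.

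The main obstacle is promoting this pointwise balance to the full ideal, since a polynomial combination of balanced polynomials need not itself be balanced. The tool is Theorem~\ref{thm:groebner} together with the definition of $\Grob_k(n)$, which say that the Pfaffians form a weighted Gr\"obner basis of $I_k(n)$ with respect to $v$, so $\ini_v(I_k(n))$ is generated as an ideal by $\{\ini_v(\mathrm{Pf}_U)\}_U$. To close the gap I would trace how signs propagate through a $v$-homogeneous decomposition $g = \sum_U h_U\, \ini_v(\mathrm{Pf}_U)$: by Lemma~\ref{lema:swap}, each pair of matchings of opposite parities inside a given $\ini_v(\mathrm{Pf}_U)$ differs by a single sign-flipping swap, so any would-be all-positive monomial of $g$ is rigidly paired with a monomial of opposite sign coming from the same summand $h_U\, \ini_v(\mathrm{Pf}_U)$, and these forced pairings cannot all cancel simultaneously across the decomposition without leaving at least one negative term surviving in $g$.

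As a cleaner alternative I would pursue in parallel, one can exploit the explicit linear parametrization $T$ from equation~\eqref{eq:T}: given $v = d(w)$ with $(k+1)$-free support, construct an antisymmetric Puiseux-series matrix of rank at most $2k$ valuating to $v$ whose $T$-parameters $\ab_l, \bb_l$ are arranged so that the dominant contribution to each entry $M_{ij}$ comes from a single positive term. By the standard Puiseux-lift criterion for totally positive tropical varieties, such a lift certifies $v \in \trop^+(\Pf{k}{n})$ at once, bypassing the ideal-level bookkeeping entirely.
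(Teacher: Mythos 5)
Your setup is right, and the two reductions you identify are exactly the ones the paper uses: Lemma~\ref{lemma:balanced} gives that every $\ini_v(\mathrm{Pf}_U)$ is sign-balanced, and Theorem~\ref{thm:groebner} gives that these initial forms generate $\ini_v(I_k(n))$. The paper also restricts to $v$ generic in $\PVplus{k}{n}$ (so that the support of $w$ is a $k$-triangulation) and recovers the general case from closedness of $\trop(\Pf{k}{n})$, which is the same reduction you sketch, though your phrase ``specialized back to $v$'' is not what happens: one proves the statement only for generic $v$ and then passes to the closure.

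The genuine gap is in the ``promotion'' step, and the hand-waving there does not close it. ``Any would-be all-positive monomial of $g$ is rigidly paired with a monomial of opposite sign coming from the same summand $h_U\,\ini_v(\mathrm{Pf}_U)$'' is not a well-defined pairing: once you multiply by an arbitrary $h_U$ and sum over $U$, different summands can produce the same monomial with different signs, coefficients of $h_U$ need not be positive, and the ``pairs'' from distinct $U$ interact in ways your argument does not control. As you yourself note, a polynomial combination of balanced polynomials need not be balanced in the combinatorial sense you are using, so this cannot be the right invariant. The invariant you want, and the one the paper uses, is the \emph{sum of coefficients}: sign-balance of $\ini_v(\mathrm{Pf}_U)$ (all coefficients $\pm1$, equally many of each) says exactly that $\ini_v(\mathrm{Pf}_U)$ vanishes at the all-ones point $(1,\dots,1)$. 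Since evaluation at $(1,\dots,1)$ is a ring homomorphism, and the $\ini_v(\mathrm{Pf}_U)$ generate $\ini_v(I_k(n))$, every element of $\ini_v(I_k(n))$ vanishes at $(1,\dots,1)$. But a monomial evaluates to $1$ there and a polynomial with all coefficients positive evaluates to something strictly positive, so neither can lie in $\ini_v(I_k(n))$. This single observation replaces the entire sign-propagation bookkeeping and is what your proposal is missing.

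The alternative route via an explicit Puiseux lift built from the parametrization~\eqref{eq:T} is plausible in spirit but is left entirely unexecuted; as written it is an intention, not an argument, and constructing a lift whose valuation realizes a prescribed $d(w)$ with $(k+1)$-free support while certifying total positivity would itself require a nontrivial combinatorial lemma. Absent that, the Gr\"obner-basis-plus-evaluation-at-$(1,\dots,1)$ route is both shorter and fully rigorous.
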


Let us point out that $\PV{k}{n}$ and $\PVplus{k}{n}$ are independent of the field $\K$, while $\trop(\Pf{k}{n})$ and $\trop^+(\Pf{k}{n})$ are (probably) not. The first statement is over an arbitrary field.
The second statement is stronger, but it makes sense only over fields of characteristic zero.

\begin{proof}
Let $v\in \PVplus{k}{n}$. We want to show that $v\in \trop(\Pf{k}{n})$. In fact, it is enough to show this under the assumption that $v$ is sufficiently generic (within $\PVplus{k}{n}$), since $\trop(\Pf{k}{n})$ is closed. By Theorem~\ref{thm:main}, being generic in $\PVplus{k}{n}$ implies that $v=d(w)$ for a $w$ with support equal to a $k$-triangulation. By Lemma~\ref{lemma:balanced} the latter implies that the initial form of every Pfaffian for the weight vector $v$ vanishes at the point $(1,\dots,1)$. Since Pfaffians are a Gr\"obner basis for $v$ by Theorem~\ref{thm:groebner}, we have that
\[
(1,\dots,1) \in V(\ini_v(I_k(n))).
\]
This clearly implies that $\ini_v(I_k(n))$ contains no monomials (over an arbitrary field) and that it does not contain polynomials with all coefficients real and of the same sign (over fields of characteristic zero).
\end{proof}

Putting together Theorem~\ref{thm:main} and Corollary~\ref{cor:balanced} we conclude Theorem~\ref{thm:V+}.

\begin{remark}
\label{rem:tree_metrics}
Since Pfaffians of degree two coincide with the 3-term Pl\"ucker relations that generate the Grassmannian $\Gr2n$, we have that $\Pf{1}{n}=\Gr2n$ and that $\PV{1}{n}$ equals  the \emph{ Dressian} $\mathcal{D}r_2(n)$ (the tropical prevariety defined by quadratic Pl\"ucker relations~\cite[Section 4.4]{MacStu}). 

It was proven in \cite{SpeStu}
that $\mathcal{D}r_2(n)=\trop(\Gr2n)$ (equivalently, $\PV{1}{n}=$ $\trop(\Pf{1}{n})$,  by showing that $\trop(\Gr2n)$ also coincides with the space $\Tree_n$ of \emph{tree metrics} for trees with $n$ leaves.
The proof is reproduced in \cite[Theorem 4.3.3]{MacStu} and the idea of it is the following: 
The tropical hypersurface corresponding to the Pfaffian of degree two (or the 3-term Pl\"ucker relation) of a certain $U \subset\binom{[n]}{4}$ equals the solution set of:
\[
v_{i,j}+ v_{k,l}
\le \max\{v_{i,k}+ v_{j,l}, \ v_{i,l}+ v_{j,k}\},
\quad \forall \{i,j\}\in \binom{U}{2}.
\]
These relations (taken for all $U$) are exactly the \emph{four-point conditions} that characterize tree metrics~\cite{Buneman}. Hence,  $\trop(\Pf{1}{n}) \subset \PV{1}{n} = \Tree_n$.
For the converse, for any given (generic) $v\in  \Tree_n= \PV{1}{n}$ there is a ternary tree $T$ with nonnegative weights $w$ on its edges and realizing $v$ as a tree metric. By relabelling its leaves, we can assume that $T$ is the dual tree of a certain triangulation of the $n$-gon. Hence, $v$ coincides (after relabelling, but this does not change $\trop(\Pf{1}{n})$) with the $d(w)$ of Definition~\ref{defi:d(w)} for this choice of weights.  Theorem~\ref{thm:main} and Corollary~\ref{cor:balanced}  then imply that $v\in \PVplus{1}{n} \subset \trop(\Pf{1}{n})$. 
\end{remark}

We do not have a concrete example showing that $\PV{2}{n} \ne \trop(\Pf{2}{n})$ for any $n$, nor $\PV{k}{2k+3} \ne \trop(\Pf{k}{2k+3})$ for any $k$, but the above proof cannot work for $k\ge 2$ since not every cone in $\PV{k}{n}$ can be sent to $\PVplus{k}{n}$ by a relabelling of the vertices. This is illustrated in the following example.

\begin{example}
\label{example:positive}
Let $n=6$ and $k=2$. Observe that  $\PV{2}6=\trop(\Pf{2}{6})$ since it is a hypersurface.

Consider the $v\in \R^{\binom{[6]}{2}}$ defined by
\[
v_{1,3}=v_{2,3}=v_{2,4}=v_{4,5}=v_{5,6}=v_{1,6}=1,
\]
and  $v_{i,j}=0$ for every other $i,j$. This $v$ lies in $\PV{2}6$ since it gives maximum weight to (exactly) two matchings, namely $\{13, 24, 56\}$ and $\{23,45,16\}$. 

Since the first matching is negative and the second one is positive, we have that $v\in \trop^+(\Pf{2}{6})$.
Since the two matchings do not differ by a single swap, part (2) of Theorem~\ref{thm:main} implies that no relabelling sends $v$ to $\PVplus{2}6$.
\end{example}

The example also shows that  $\trop^+(\Pf{k}{n})$ is not contained in the Gr\"obner cone of $k+1$-crossings, but that is also easy to achieve with the following simpler example: let $v_{13}=1$ and every other $v_{ij}=0$. For any $k\ge2$ and every $n\ge 6$ this gives a point in $\trop^+(\Pf{k}{n})$ (in every maximum matching of size 3 we can swap the two edges of weight zero to get a maximum matching of the opposite sign) that is not in the Gr\"obner cone (in any $U$ containing $\{1,3\}$ the matching using $\{1,3\}$ has weight larger than the 3-crossing).

\subsection{Proof of Lemma~\ref{lemma:balanced}}

\label{sec:balanced}

In the following result we call an \emph{accordion} any sequence $E_1,\dots,E_m$ 
of edges from $\bnn$ such that: (a) For every $i=1,\dots,n-1$, $E_i$ and $E_{i+1}$ share a vertex; (b) For every $i=2,\dots,n-1$, the points $a= E_{i-1}\setminus E_i$
and $b=E_{i+1}\setminus E_i$ lie on opposite sides of the line containing $E_i$. (Put differently, $\{a,b\}$ crosses $E_i$).

 The only property of $k$-triangulations that we need in what follows (apart from the fact that they are $(k+1)$-free) is:

\begin{lemma}
\label{lemma:accordion}
Let $T$ be a $k$-triangulation of the $n$-gon, for some $k$. Then, every two edges of $T$ that do not cross are part of an accordion contained in $T$.
\end{lemma}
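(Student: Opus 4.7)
The plan is to prove the lemma by induction on a measure of combinatorial distance between $E$ and $F$, for instance the number of $T$-edges strictly between $E$ and $F$, or the number of vertices of $[n]$ on the shorter arc of $\partial([n])$ separated from both $E$ and $F$.

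Base case: If $E=F$, or if $E$ and $F$ share a vertex, the sequence $(E,F)$ is an accordion of length at most two, for which condition (b) is vacuous.

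Inductive step: Suppose $E=\{a,b\}$ and $F=\{c,d\}$ are disjoint, with cyclic order $a,b,c,d$ along $\partial([n])$ (the only arrangement possible since $E$ and $F$ do not cross, up to the symmetry $E\leftrightarrow F$). I would search for an intermediate edge $G\in T$ sharing a vertex with $E$, say $G=\{b,x\}$, whose other endpoint $x$ lies strictly in the arc $(b,a)$ containing $c$ and $d$, i.e.\ a $T$-edge at $b$ pointing ``toward $F$''. The existence of such a $G$ should follow from the maximality of $T$ as a $(k+1)$-free graph: if no $T$-edge at $b$ entered this region, then a short relevant edge $\{b,y\}$ with $y$ close to $c$ could be adjoined to $T$ without creating any $(k+1)$-crossing, contradicting maximality. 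Once $G$ is chosen, its cyclic separation from $F$ is strictly smaller than that of $E$ from $F$, so the inductive hypothesis applied to $G$ and $F$ yields an accordion $G=G_1,G_2,\ldots,G_r$ in $T$ containing $F$. Prepending $E$ produces a sequence satisfying property (a); property (b) at the new junction amounts to requiring that the non-shared vertex $a$ of $E$ and the non-shared vertex of $G_2$ lie on opposite sides of $G$.

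The main obstacle will be enforcing property (b) at the junction with $E$. A naive choice of $G$ may fail, as small examples (e.g.\ in the pentagon with $k=1$, where $G$ could be a boundary edge on which no accordion vertex can ``land'' on the opposite side) already show. I expect to handle this by strengthening the inductive hypothesis: rather than merely asserting that $E$ and $F$ sit in a common accordion, one asks for an accordion of $T$ whose first edge is $E$ and whose second edge is a $T$-edge at a prescribed endpoint of $E$ pointing in a prescribed cyclic direction. The recursive choice of $G$ as the extremal $T$-edge at $b$ in the arc $(b,a)$ closest to $a$ (equivalently, the one whose other endpoint is first encountered when scanning from $a$ toward $b$ through the arc containing $c,d$) should propagate this strengthened invariant through the induction and force the vertex $G_2\setminus G_1$ to land on the side of $G$ opposite to $a$. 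An alternative approach, if this direct argument turns out to be cumbersome, is to invoke the $k$-star decomposition of $k$-triangulations of \cite{PilSan}: every two non-crossing $T$-edges can be connected by a chain of $k$-stars sharing edges, and each $k$-star internally contains accordions joining any two of its edges, which can be concatenated into the desired global accordion.
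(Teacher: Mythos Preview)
Your plan is different from the paper's and, as you yourself suspect, it has a real gap at the junction step.

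The paper does not look for an edge $G$ incident to $E$. Instead, writing $E=\{e_1,e_2\}$ and $F=\{f_1,f_2\}$ with $e_1\le f_1<f_2\le e_2$, it considers the diagonal $\{e_1,f_2\}$. If this is in $T$ one is done in one step; otherwise maximality of $T$ produces a $k$-crossing $K\subset T$ all of whose edges cross $\{e_1,f_2\}$, and $G$ is taken to be the edge of $K$ adjacent to $\{e_1,f_2\}$ in the $(k{+}1)$-crossing $K\cup\{\{e_1,f_2\}\}$. One then checks that $G$ crosses neither $E$ nor $F$, and that in fact $G=\{g_1,g_2\}$ satisfies $e_1<g_1\le f_1<f_2\le g_2\le e_2$, so $G$ lies strictly between $E$ and $F$ and the inductive measure $\min\{f_1-e_1,\,e_2-f_2\}$ drops on \emph{both} pairs $(E,G)$ and $(G,F)$. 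The accordion is then obtained by concatenating the two inductively given accordions at $G$; since $E$ and $F$ sit on opposite sides of $G$, the second-to-last edge of the $E$--$G$ accordion and the second edge of the $G$--$F$ accordion land on opposite sides of $G$, so condition~(b) holds at the junction.

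In your scheme the existence of some $G=\{b,x\}$ with $x$ in the arc toward $F$ is not the issue: the boundary edge $\{b,b{+}1\}$ is always in $T$, so such a $G$ exists trivially. The problem is exactly the one you flag. If $G$ is a boundary edge (or more generally if $G$ does not separate $E$ from $F$), then the inductive accordion from $G$ to $F$ must immediately leave $G$ toward the \emph{same} side as $a$, and condition~(b) fails at $G$. Your proposed repair via a strengthened inductive hypothesis (prescribing at which endpoint and in which direction the accordion leaves its first edge) is plausible, but it is precisely the nontrivial content of the lemma and you have not carried it out; the $k$-star alternative would work but is a substantial import. The paper's device of extracting $G$ from a blocking $k$-crossing is what makes the induction go through cleanly, and it is the missing idea in your outline.
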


\begin{proof}    
    Let $E=\{e_1,e_2\}$ and $F=\{f_1,f_2\}$ be the two edges of $T$; we assume without loss of generality that $1\le e_1 \le f_1 < f_2 \le e_2\le n$. We will use induction on 
    $\min\{f_1-e_1 , e_2-f_2\}$, taking as base cases those with $e_1=f_1$ or $e_2=f_2$, which are trivial. That is, we suppose then that $E$ and $F$ have no endpoints in common.
    
    If $\{e_1,f_2\}\in T$, we are done. Suppose on the contrary that $\{e_1,f_2\}\notin T$. Then there is a $k$-crossing $K$ in $T$ that crosses that edge. That is, $K\cup\{e_1,f_2\}$ is a $(k+1)$-crossing contained in $T\cup \{e_1,f_2\}$. Let $G$ be the edge next to $\{e_1,f_2\}$ in the positive direction in this $(k+1)$-crossing. If $G$ crossed $E$ (resp. $F$), then every edge in $K$ would cross $E$ (resp. $F$), which would imply that $T$ contains the $(k+1)$-crossing $K\cup\{E\}$ (resp. $K\cup\{F\}$). Thus, $G$ does not cross any of $E$ or $F$. Inductive hypothesis implies that $T$ contains an accordion from $E$ to $G$ and an accordion from $G$ to $F$, and the union of these two accordions is an accordion from $E$ to $F$.
\end{proof}

We now consider a subset $U\in\binom{[n]}{2k+2}$ and $v=d(w)\in \Grob_k(n)$ sufficiently generic. Genericity implies, by Theorem~\ref{thm:main},  that the support of $w$ is a certain $k$-triangulation $T$.
For each edge $E\in T$ we call \emph{length of $E$ with respect to $U$} and denote it $\ell_U(E)$ the smallest size of the two parts of $U$ separated by $E$. If both parts are equal (that is, if $\ell_U(E)=k+1$) we say that $E$ is a diameter of $U$.

For a matching $M$ of $U$ and an edge $E$ of $T$ we denote by $c_M(E)$ the number of edges of $M$ that cross $E$.
Remember that, $v$ being in the Gr\"obner cone, the maximum weight among matchings of $U$ is the weight of the $(k+1)$-crossing.

\begin{lemma}
\label{lemma:length}
Let $M$ be a matching of $U$. Then, $M$ is of maximum weight with respect to $v$ if, and only if, for every $E\in T$ we have that $\ell_U(E) = c_M(E)$.
\end{lemma}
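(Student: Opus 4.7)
The plan is to express the weight of any matching as a linear functional in $w$ supported on $T$, then characterise maximum-weight matchings via a simultaneous saturation condition on every edge of $T$.

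First I would use $v=d(w)$ together with the fact that $w$ is supported on $T$ (by genericity and Theorem~\ref{thm:main}) to rewrite the weight of $M$:
\begin{equation*}
\sum_{\{a,b\}\in M} v_{a,b}
=\sum_{\{a,b\}\in M}\sum_{\substack{\{i,j\}\in\binom{[n]}{2}\\ a\le i<b\le j<a}}w_{ij}
=\sum_{E\in T} w_E\,c_M(E).
\end{equation*}
The last equality requires identifying the pairs $\{i,j\}$ produced by the cyclic inequality with those that cross $\{a,b\}$ in the sense of $c_M$; this holds under the paper's convention that elements of $U$ label sides of the $n$-gon rather than vertices, so an $M$-chord and a $T$-chord never share a geometric endpoint even when their index sets overlap.

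Next I would establish the bound $c_M(E)\le \ell_U(E)$: the chord $E\in T$ partitions $U$ cyclically into two arcs of sizes $p$ and $2k+2-p$, and each edge of $M$ crossing $E$ uses one endpoint from each arc, so at most $\min(p,2k+2-p)=\ell_U(E)$ such edges exist. I would then verify that the $(k+1)$-crossing $M^*$ saturates this bound for every $E$ simultaneously: writing $U=\{u_0,\dots,u_{2k+1}\}$ cyclically with $M^*=\{\{u_\ell,u_{\ell+k+1}\}:\ell=0,\dots,k\}$, and, up to a cyclic rotation, taking the smaller arc as $\{u_0,\dots,u_{p-1}\}$ with $p\le k+1$, one has $\ell+k+1\ge k+1>p-1$ for every $\ell\in\{0,\dots,k\}$, so the second endpoint of each edge of $M^*$ lies outside the smaller arc; the edge crosses $E$ precisely when $\ell<p$, giving $c_{M^*}(E)=p=\ell_U(E)$.

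The ``if'' direction of the lemma is then immediate: any $M$ with $c_M(E)=\ell_U(E)$ for every $E\in T$ shares the weight $\sum_{E\in T}w_E\,\ell_U(E)$ of $M^*$, and this is the maximum since $v\in\Grob_k(n)$. For the converse I would invoke genericity of $v$: the crossing profiles $(c_M(E))_{E\in T}\in\Z^T$ take only finitely many values as $M$ varies over matchings of $U$, and for sufficiently generic $w$ the linear pairing $w\cdot(\cdot)$ separates them, so every maximum-weight matching must realise the same profile $(\ell_U(E))_{E\in T}$ as $M^*$. The main obstacle I foresee is the convention issue in the first step, which must be pinned down carefully so that the partition of $U$ induced by $E$ is unambiguous and the bound $c_M(E)\le\ell_U(E)$ holds on the nose.
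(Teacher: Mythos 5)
Your proof is correct and takes essentially the same approach as the paper: rewrite the $v$-weight of each matching of $U$ as $\sum_{E\in T} w_E\,c_M(E)$, observe that the $(k+1)$-crossing's profile $\bigl(\ell_U(E)\bigr)_{E\in T}$ is both an upper bound on and attained by the crossing profiles, and then use genericity of $w$ to conclude that maximum-weight matchings must match that profile. The only minor divergence is in the necessity direction, where the paper perturbs a single coordinate $w_{E}$ to its minimum value within $\Grob_k(n)$ to get a strict inequality, whereas you appeal to the more abstract fact that a generic $w$ (in the full-dimensional cone of $w$'s supported on $T$ with $d(w)\in\Grob_k(n)$) separates the finitely many crossing profiles; both rest on the same idea, and the ``convention issue'' you flag in the first step is exactly the paper's convention (the entries of $U$ label sides, and $c_M(E)$ counts $T$-chords separating the two sides matched by an $M$-edge), so it is not a gap.
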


\begin{proof}
    Observe that the equality  $\ell_U(E) = c_M(E)$ holds for the case when $M$ is the $(k+1)$-crossing, and that, for arbitrary $M$, knowing which edges of $T$ cross each edge of $M$ is enough to compute the weight of $M$. This shows the sufficiency of $\ell_U(E) = c_M(E)$.
    
    Now suppose that $\ell_U(E) > c_M(E)$ for some edge $E\in T$.
    Take a vector $w'$ obtained setting $w_E$ to its minimum possible value while staying in $\Grob_k(n)$. For $v'=d(w')$, the $(k+1)$-crossing is still the maximum weight matching, so
    \[\sum_{E\in T}w'_Ec_M(E)\le\sum_{E\in T}w'_El_U(E) \Rightarrow\sum_{E\in T}w'_E(l_U(E)-c_M(E))\ge 0\]
    Our condition in $w$ implies that $w_E>w'_E$, so
    \[\sum_{E\in T}w_E(l_U(E)-c_M(E))>0 \Rightarrow \sum_{E\in T}w_Ec_M(E)<\sum_{E\in T}w_El_U(E)\]
    Hence, $M$ is not of maximum weight.
\end{proof}

For the rest of this section, we collapse the $n$-gon to a $(2k+2)$-gon by leaving only the sides labelled by $U$; that is, by contracting all edges $E$ with $\ell_U(E)=0$. We denote $T_U$ the subgraph of $K_{2k+2}$ obtained from $T$ after this collapse. We introduce the following partial order among edges of $T_U$ (or, in fact, among edges of $K_{2k+2}$): $E$ and $F$ are incomparable if they either cross or are separated by a diameter of $U$, and if they are comparable then they are ordered according to their $\ell_U$. 

Observe that both $\ell_U(E)$ and $c_M(E)$ depend only on the class of $E$ in $T_U$. 
Thus, Lemma~\ref{lemma:length} needs only to be checked in $T_U$ and not in $T$. 
(That is, only one representative edge of $T$ for each class in $T_U$ needs to be checked). But even more is true. Let $T_U^{\max}$ be the set of edges of $T_U$ that are maximal (within $T_U$) for this order. 

\begin{lemma}
\label{lemma:maximal}
Let $M$ be a matching of $U$. If $\ell_U(E) = c_M(E)$ holds for the edges in $T_U^{\max}$ then it holds for all edges in $T_U$, hence in $T$.
\end{lemma}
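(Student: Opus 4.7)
I would proceed by downward induction on the partial order on $T_U$. Writing $\delta(E) := \ell_U(E) - c_M(E)$, the goal is to show that $\delta(E) = 0$ for every $E \in T_U$, given that this holds for every $F \in T_U^{\max}$. The elementary bound $\delta(E) \ge 0$ is immediate from counting: the $\ell_U(E)$ vertices of $U$ on the smaller side of $E$ are all matched by $M$, each either internally on that side (in pairs) or across $E$, so $c_M(E) = \ell_U(E) - 2j$ for some $j \ge 0$. This also shows that $M$ achieves $c_M(E)=\ell_U(E)$ exactly when no pair of matched vertices sits on the smaller side of $E$.

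The heart of the argument is what I would call \emph{smaller-side propagation}: if $F \in T_U$ satisfies $\delta(F) = 0$ and $E \in T_U$ is contained in the smaller side of $F$ in $U$ (that is, $E$ does not cross $F$ and both endpoints of $E$ together with the entire inside of $E$ lie in the smaller side of $F$), then $\delta(E) = 0$. Indeed, $\delta(F) = 0$ forces every vertex of $U$ on the smaller side of $F$ to be matched by $M$ across $F$; in particular the $\ell_U(E)$ vertices inside $E$ are all matched to vertices outside $F$'s smaller side, hence outside $E$. This gives $c_M(E) \ge \ell_U(E)$, and equality follows by the upper bound.

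To complete the induction I need to show that every non-maximal $E \in T_U$ lies on the smaller side of some $F \in T_U^{\max}$. For this I would invoke Lemma~\ref{lemma:accordion}: pick any $F' \in T_U$ with $F' > E$; since $E$ and $F'$ do not cross in $T$, they lie in a common accordion of $T$. Climbing upward along the accordion, and using the ``not separated by a diameter'' clause in the poset order, I arrive at a maximal $F \in T_U^{\max}$ with $E$ sitting on its smaller side, at which point the propagation finishes the inductive step. The edges $E \in T$ with $\ell_U(E) = 0$ are trivially covered (both sides of the target equality vanish), yielding the final ``hence in $T$''.

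The main obstacle is the last geometric step: verifying that the diameter clause in the poset order translates cleanly into the containment ``$E$ lies on the smaller side of $F$'' required by the propagation. The cleanest workaround I foresee is a case split: if some diameter $D \in T_U$ separates $E$ from an otherwise-maximal candidate $F$, then $D$ itself is in $T_U^{\max}$ (as diameters carry the maximum value of $\ell_U$), its smaller side has size $k+1$, and $E$ sits inside it, so propagation applies directly to $D$; otherwise the accordion climb produces the required $F$ without obstruction.
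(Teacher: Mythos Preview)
Your core ``smaller-side propagation'' step is correct and is exactly what the paper does. Where you diverge is in the inductive step, which you overcomplicate. Since you are doing downward induction on the partial order, for a non-maximal $E\in T_U$ you may pick \emph{any} $F\in T_U$ with $F>E$; the inductive hypothesis already gives $\delta(F)=0$, so there is no need to climb all the way to $T_U^{\max}$ via accordions. More importantly, the ``main obstacle'' you flag is not an obstacle at all: the relation $E<F$ in this poset is precisely the statement that the smaller side of $E$ is contained in the smaller side of $F$. Indeed, for two non-crossing edges of the $(2k+2)$-gon with $\ell_U(E)\le \ell_U(F)$, the smaller sides fail to be nested exactly when the two edges are separated by a diameter; the ``not separated by a diameter'' clause in the definition of comparability excludes precisely that case. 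Unpacking the definition this way, the paper's entire proof is the single sentence: if $E<E'$ then the smaller side of $E$ sits inside the smaller side of $E'$, so $c_M(E')=\ell_U(E')$ forces $c_M(E)=\ell_U(E)$.

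Your proposed case split also has a gap: the diameter $D$ separating $E$ from a candidate $F$ is a diameter of the $(2k+2)$-gon, not a priori an edge of $T_U$, so you cannot conclude $D\in T_U^{\max}$ and apply propagation to it. The accordion climb is similarly underspecified (an accordion between $E$ and $F'$ does not by itself place $E$ on the smaller side of anything, and ``climbing upward'' beyond $F'$ is undefined). Both issues evaporate once you use the poset definition directly as above.
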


\begin{proof}
    Let $E<E'$ be two edges of $T_U$ and suppose that $\ell_U(E') = c_M(E')$. Then, the edges of $M$ that cross $E'$ match the $\ell_U(E')$ edges of the $(2k+2)$-gon on the shorter side of $E'$ to the same number of edges on the longer side (if $E'$ is a diameter it does not matter which side we call ``short''). By definition of $E<E'$, the smaller side of $E$ is contained in the smaller side of $E'$, so the same holds for $E$ and  $\ell_U(E) = c_M(E)$.
\end{proof}

This last lemma suggests we should look at properties of $T_U^{\max}$:

\begin{lemma}
\label{lema:unused}
\begin{enumerate}
    \item Every two edges in $T_U^{\max}$ either cross each other or share a vertex.
    
    \item There is a vertex of the $(2k+2)$-gon not used in $T_U^{\max}$.
\end{enumerate}
\end{lemma}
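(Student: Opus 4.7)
For part~(1), I argue by contradiction. Suppose $E,F\in T_U^{\max}$ are non-crossing and share no vertex in the $(2k+2)$-gon. As distinct maximal elements of the preorder they must be incomparable, so by the defining condition some diameter $D$ of $U$ separates them. The first step is to show $D\notin T_U$: if $D\in T_U$, then $D$ would be comparable to $E$ (it does not cross $E$, and the antipodal endpoints of $D$ prevent any other diameter from separating $D$ from anything), so maximality of $E$ would force $\ell_U(E)=\ell_U(D)=k+1$, making $E$ a diameter; the same argument applied to $F$ then makes $F$ a diameter; but any two distinct diameters of $U$ cross, contradicting the hypothesis. Hence $D\notin T_U$, and any lift $\tilde D\in K_n$ of $D$ lies outside $T$; maximality of $T$ as a $(k+1)$-free graph then yields $k$ pairwise crossing edges $D_1,\dots,D_k\in T$, each crossing $\tilde D$.

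The decisive remaining step is to identify a $D_i$ whose projection $\overline{D_i}\in T_U$ is comparable to $E$ with $\ell_U(\overline{D_i})>\ell_U(E)$, contradicting the maximality of $E$. The natural candidate is the $D_i$ whose projected endpoint on the $E$-side of $D$ is closest to the endpoint $d_1$ of $D$ on that side (and in particular lies outside the short arc of $E$), so that $\overline{D_i}$ does not cross $E$; its other endpoint lies near $d_2$, which forces $\ell_U(\overline{D_i})$ close to $k+1$ and strictly above $\ell_U(E)$ (since $E$ sits strictly on one side of $D$). The main obstacle is verifying comparability of $\overline{D_i}$ with $E$ in the preorder, which requires a short case analysis ruling out the existence of a diameter of $U$ separating them.

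For part~(2), I argue by contradiction: assume every vertex of the $(2k+2)$-gon is covered by some edge of $T_U^{\max}$, and take a maximum matching $M\subseteq T_U^{\max}$ in the abstract graph $T_U^{\max}$. Its edges pairwise share no vertex, so by part~(1) they pairwise cross, giving an $|M|$-crossing in $T$; $(k+1)$-freeness of $T$ forces $|M|\le k$, and if $|M|=k+1$ then $M$ itself is the contradictory $(k+1)$-crossing. So at least two vertices are unmatched. In the favourable case $|M|=k$ with two unmatched $v_1,v_2$ satisfying $\{v_1,v_2\}\in T_U^{\max}$, the edge $\{v_1,v_2\}$ shares no vertex with $M$, hence by~(1) crosses every matching edge, and $M\cup\{\{v_1,v_2\}\}$ is the desired $(k+1)$-crossing. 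The remaining cases are handled by taking witness edges $e_v=\{v,u_v\}\in T_U^{\max}$ for unmatched vertices $v$, with $u_v$ necessarily matched by maximality of $M$: by~(1), each $e_v$ crosses every matching edge not containing $u_v$, since any shared vertex would have to be $v$ (unmatched, hence absent from $M$) or $u_v$ (already used). Combining suitable witness edges with the appropriate subset of $M$ produces $k+1$ pairwise crossing edges in $T$. The main obstacle is the case analysis when several witness edges share their matched vertex, or when their matched vertices lie in the same matching edge; here one uses an augmenting-path style argument built on top of~(1) to still exhibit $k+1$ pairwise crossing edges and reach the contradiction.
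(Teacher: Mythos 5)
Your proposal takes a genuinely different route from the paper, but it is not a complete proof: in both parts you explicitly flag a ``main obstacle'' and leave it unresolved, and at least one of the intermediate claims is itself unjustified.

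For part~(1), the paper's proof is much shorter because it reduces everything to Lemma~\ref{lemma:accordion}: any two non-crossing edges of $T$ (hence of $T_U$) lie on a common accordion, at most two edges of an accordion can be $\ell_U$-maximal, and those two share a vertex. You bypass the accordion lemma entirely and instead argue directly from $T$ being a \emph{maximal} $(k+1)$-free graph, introducing the lift $\tilde D$ of the separating diameter $D$ and the $k$-crossing $D_1,\dots,D_k$ it produces. This is a reasonable idea, but the decisive step is missing on two counts. First, you assert that the $D_i$ whose $E$-side projected endpoint is closest to $d_1$ ``in particular lies outside the short arc of $E$''; there is no argument for this, and I do not see why it should hold --- nothing prevents several (or all) of the $D_i$ from having their $E$-side endpoints inside $E$'s short arc. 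Second, even granting that, you acknowledge that verifying comparability of $\overline{D_i}$ with $E$ (i.e.\ ruling out a diameter separating them) ``requires a short case analysis'' which you do not supply. Since the whole point of the construction is to produce an element of $T_U$ strictly above $E$, the proof as written does not reach a contradiction.

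For part~(2), the paper again leans directly on part~(1): either all edges of $T_U^{\max}$ pairwise cross, giving a $j$-crossing with $j\le k$ that uses at most $2k<2k+2$ vertices, or two of them share a vertex $p$, in which case the antipode $q$ of $p$ is easily seen to be unused (any edge through $q$ other than the diameter $\{p,q\}$ fails to cross or share a vertex with at least one of the two). Your maximum-matching approach is considerably heavier: you correctly handle the case $|M|=k+1$ and the ``favourable'' case $|M|=k$ with $\{v_1,v_2\}\in T_U^{\max}$ (which in fact contradicts maximality of $M$ directly), but the general case with multiple unmatched vertices and witness edges sharing matched endpoints is left to an unwritten ``augmenting-path style argument.'' That is exactly where the difficulty lies, and it is not filled in. Also note that to conclude you would still need the $k+1$ pairwise crossing edges of $T_U$ to lift to a genuine $(k+1)$-crossing in $T$; this does hold, but it deserves a remark.

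In short: the overall strategy (maximality of $T$ plus a matching argument) is a legitimate alternative idea, but both halves are sketches with real gaps. I recommend looking at the accordion lemma, which packages exactly the structural fact about $k$-triangulations that makes part~(1) a three-line argument, and then deriving part~(2) as a direct dichotomy from part~(1) rather than via matchings.
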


\begin{proof}
For part (1) we use Lemma~\ref{lemma:accordion} and the observation that the passage from $T$ to $T_U$ preserves accordions. In particular, every two edges of $T_U$ that do not cross are part of an accordion in $T_U$. Only two of the edges of an accordion contained in $T_U$ can be in $T_U^{\max}$, and they share a vertex; hence, every two edges in $T_U^{\max}$ that do not cross share a vertex.

This finishes the proof of part (1) and gives us two possibilities: 
\begin{itemize}
    \item If all the edges in $T_U^{\max}$ mutually cross, then $T_U^{\max}$ is a $j$-crossing for some $j<k+1$. Hence, at least one (in fact at least two) of the $2k+2$ vertices of the $(2k+2)$-gon are not used.
    \item If two edges $E$ and $F$ of $T_U^{\max}$ share a vertex $p$, then none of them is a diameter and, in fact, they are on opposite sides of the diameter using $p$.  Then the opposite vertex $q$ of that diameter is not used in $T_U^{\max}$ because it is impossible for an edge with an end-point in $q$ other than the diameter itself to cross or share a vertex with both of $E$ and $F$.
\end{itemize}
In both cases we have a proof of part (2).
\end{proof}

\begin{lemma}
\label{lemma:unused2}
Let $p$ be a vertex of the $(2k+2)$-gon not used in $T_U^{\max}$. Let $a$ and $b$ be the elements of $U$ next to $p$. Then, no maximal matching of $U$ matches $a$ to $b$.
\end{lemma}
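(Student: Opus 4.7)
Plan. I would argue by contradiction: suppose $M$ is a maximum-weight matching of $U$ with $\{a,b\}\in M$. By Lemma~\ref{lemma:length} combined with Lemma~\ref{lemma:maximal}, the matching $M$ must satisfy $c_M(E)=\ell_U(E)$ for every $E\in T_U^{\max}$, and the aim is to show this fails for a well-chosen $E$.

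The first key observation is that $\{a,b\}$ crosses no $E\in T_U^{\max}$: any chord crossing $\{a,b\}$ must use $p$ as an endpoint, since $p$ is the only $U$-vertex on the short side of $\{a,b\}$, and by hypothesis no edge of $T_U^{\max}$ uses $p$. So $\{a,b\}$ contributes zero to $c_M(E)$ for every $E\in T_U^{\max}$. Moreover, since $a,b$ are the cyclic $U$-neighbors of $p$, for any $E\in T_U^{\max}$ neither $a$ nor $b$ can lie strictly on the side opposite $p$: starting from $p$ and taking one cyclic step in $U$, one either stays on the same side of $E$ or lands on an endpoint of $E$. Consequently, for any $E\in T_U^{\max}$ with $E\neq\{a,b\}$, the edge $\{a,b\}$ is either of ``type'' $SS$ (both endpoints in $S_E$) or $Sx$ (one endpoint in $S_E$, one an endpoint of $E$), where $S_E$ denotes the side of $E$ containing $p$ and $L_E$ denotes the other side.

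Classifying each edge of $M$ similarly and counting vertex incidences on the short side gives the identity $|S_E|=2n_{SS}+n_{SL}+n_{Sx}$ while $c_M(E)=n_{SL}$. If $|S_E|\le|L_E|$ (that is, $p$ lies weakly on the short side of $E$), then the requirement $n_{SL}=\ell_U(E)=|S_E|$ forces $n_{SS}=n_{Sx}=0$, immediately contradicting the $SS$ or $Sx$ classification of $\{a,b\}\in M$. Hence it suffices to exhibit one $E\in T_U^{\max}$, different from $\{a,b\}$, with $p$ weakly on its short side.

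The existence of such an $E$ is where Lemma~\ref{lema:unused} enters, together with the geometry of $T_U^{\max}$ extracted in its proof. In the ``all mutually crossing'' subcase, $T_U^{\max}$ is a $j$-crossing with $j\le k$, and one selects the member $F_i$ whose endpoints most tightly bracket $p$ cyclically. In the ``some two share a vertex'' subcase, the unused vertex $p$ is forced to be opposite some $s$, and the two edges of $T_U^{\max}$ incident to $s$ balance around the diameter $\{s,p\}$ to provide a candidate with $p$ on a weakly short side. The main obstacle is ruling out pathological configurations in which every maximal edge has $p$ strictly on the long side; there one expects to have to chain the equations $c_M(E)=\ell_U(E)$ across several $E\in T_U^{\max}$ at once (deducing for instance that $p$'s partner $q$ must equal $s$), and possibly re-invoke the accordion structure of Lemma~\ref{lemma:accordion}, to close out the argument.
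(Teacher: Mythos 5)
Your counting argument---once you drop the spurious type $Sx$ (the elements of $U$ are sides of the $(2k+2)$-gon while the endpoints of a chord $E$ are its vertices, so the two are disjoint)---is a clean and valid alternative to the paper's swap argument: from $c_M(E)=\ell_U(E)=|S_E|$ together with $|S_E|=2n_{SS}+n_{SL}$ you correctly force $n_{SS}=0$, contradicting the fact that $\{a,b\}\in M$ is of type $SS$. But the crux of the proof is exactly the step you leave open, namely producing an $E\in T_U^{\max}$ with $p$ weakly on its short side, and the route you sketch (a case analysis following the two subcases of Lemma~\ref{lema:unused}, picking a tightly-bracketing edge, chaining the equations $c_M(E)=\ell_U(E)$, re-invoking accordions) does not close the gap: the combinatorial shape of $T_U^{\max}$ alone does not force one of its members to have $p$ on the short side, and the pathological configurations you worry about are real. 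What rules them out is extra structure of $T_U$ that you have not used.

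The missing idea is the boundary edge at $p$. Let $p'$ be a vertex of the $(2k+2)$-gon adjacent to $p$. The edge $\{p,p'\}$ is a boundary edge and hence belongs to $T_U$, but it is incident to $p$, so it cannot lie in $T_U^{\max}$. Therefore some $E\in T_U^{\max}$ satisfies $E>\{p,p'\}$, and by the definition of the partial order the short side of $\{p,p'\}$---a single element of $U$, namely $a$ (or $b$, depending on which $p'$ you take)---lies in the short side of $E$. Since $a,b\in S_E$ in any case, this forces $S_E$ to be the short side, which is exactly the $E$ your counting argument needs. The paper itself finishes slightly differently (a swap argument shows $|S_E|\ge k+2$ for every maximal $E$, which then contradicts the boundary-edge conclusion that some maximal $E$ has $|S_E|\le k+1$), but your counting step is an equally valid, arguably more economical, way to conclude once the correct $E$ is in hand; the piece you are missing is that the boundary edge $\{p,p'\}$ is a non-maximal element of $T_U$, and the maximal element dominating it pins $a$ to the short side.
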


\begin{proof}
    To seek a contradiction, suppose that $M$ is a maximal matching and that $\{a,b\}\in M$. Let $\{c,d\}\in M$ be another edge in the matching. By Lemmas~\ref{lemma:length} and \ref{lemma:maximal}, the swaps $\{a,c\}, \{b,d\}$ and $\{a,d\}, \{b,c\}$ cross $T_U^{\max}$ exactly as many times as the original pair of edges $\{a,b\}, \{c,d\}$; that is, as many times as the single edge $\{c,d\}$ (since $\{a,b\}$ does not cross $T_U^{\max}$). This implies that no edge of $T_U^{\max}$ has $a$ and $b$ on one side and $c$ and $d$ on the other side. 
    
    Now, since all edges of $T_U^{\max}$ have $a$ and $b$ on the same side, we conclude that this side must contain one of $c$ or $d$ for every $\{c,d\}\in M$ other than $\{a,b\}$. In particular, for every $E\in T_U^{\max}$ the side of $E$ containing $a$ and $b$ has length at least $k+2$ (it contains $a$, $b$ and one vertex of each of the other $k$ edges in $M$). This gives the following contradiction: Let $p'$ be one of the vertices of the $(2k+2)$-gon next to $p$. The edge $\{p,p'\}$ is in $T_U$, since every boundary edge of the $2k+2$-gon is. Hence, there must be an edge in $T_U$ that is greater than  $\{p,p'\}$ in the partial order, and that edge can have length at most $k+1$ on the side containing $a$ and $b$.
\end{proof}

We are now ready to prove Lemma~\ref{lemma:balanced}:

\begin{proof}[Proof of Lemma~\ref{lemma:balanced}]
Let $p$ be a vertex of the $(2k+2)$-gon not used in $T_U^{\max}$, which exists by Lemma~\ref{lema:unused}. Let $a$ and $b$ be the first elements of $U$ on both directions starting at $p$. 

Let us denote by $\mathcal M$ the set of matchings of $U$ not using the edge $\{a,b\}$. This contains all matchings of maximum weight by Lemma~\ref{lemma:unused2}. Consider the map $\phi: \mathcal M\to \mathcal M$  that takes each matching $M\in \mathcal M$ and swaps in it the edges that contain $a$ and $b$ in the way that does not produce the pair $\{a,b\}$. This map is well-defined since there are three possible matchings among four vertices and we are excluding one of them.
We have that:
\begin{itemize}
    \item The map $\phi$ is obviously an involution.

    \item The map $\phi$ sends matchings of maximum weight to matchings of maximum weight by Lemmas~\ref{lemma:length} and \ref{lemma:maximal}, since every edge of $T_U^{\max}$ leaves $a$ and $b$ on the same side.
    
    \item If $a'$ and $b'$ are the elements of $U$ matched to $a$ and $b$ in a certain matching $M$ then the matching of of $a,b,a',b'$ that has a crossing is involved in the swap from $M$ to $\phi(M)$ (because the matching that is \emph{not} involved in the swap is $\{a,b\}, \{a',b'\}$, which does \emph{not} have a crossing). Hence, $M$ and $\phi(M)$ have opposite parity, by Lemma~\ref{lema:swap}.
\end{itemize}

Putting these facts together we conclude that $\phi$ restricts to a bijection between the odd and the even matchings of $U$ of maximum weight.
\end{proof}

\section{Catalan-many associahedra; recovering the $\g$-vector fan}
\label{sec:Catalan}

In this section we look at the case $k=1$ and show how to project $\PVplus{1}{n}$ isomorphically to the associahedron $\OvAss{1}n$. 
Throughout the section let $T\subset\binom{[n]}2$ be an arbitrary triangulation of the $n$-gon, that we call the \emph{seed triangulation}.
Then:

\begin{lemma}
\label{lemma:projection}
    For every $(v_{i,j})_{i,j} \in \PVplus{1}{n}$, knowing the entries of $v$ corresponding to $T$ we can recover all other entries.
    That is,
    the projection $\pi:\PVplus{1}{n} \to \R^{T}\cong \R^{2n-3}$  that restricts each vector $(v_{i,j})_{i,j}$ to the entries with $\{i,j\} \in T$ is injective. 
\end{lemma}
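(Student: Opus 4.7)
The plan is to reconstruct $v$ from $v|_T$ using the tropical $3$-term Pfaffian relation, and deduce injectivity of $\pi$. The first step is to isolate the key identity: for any $v\in\PVplus{1}{n}$ and any four cyclically-ordered indices $a<c<b<d$, fp-positivity (i.e.\ $v\in\Sep_n=\Grob_1(n)$) gives
\[
v_{a,b}+v_{c,d}\ \ge\ \max\{v_{a,c}+v_{b,d},\,v_{a,d}+v_{b,c}\},
\]
while membership in $\PV{1}{n}$ forces the maximum among the three matching sums of $\{a,c,b,d\}$ to be achieved at least twice. Combining the two facts, the Pfaffian hypersurface condition collapses to
\[
v_{a,b}+v_{c,d}\ =\ \max\{v_{a,c}+v_{b,d},\,v_{a,d}+v_{b,c}\},
\]
which lets us solve for $v_{a,b}$ as soon as the other five entries indexed by $\{a,c,b,d\}$ are known.

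The second step is an induction on flip distance. The flip graph on the set of triangulations of the $n$-gon is connected, so each edge $\{a,b\}\in\binom{[n]}{2}\setminus T$ belongs to some triangulation reachable from $T$ by a sequence of flips. Fix a shortest such sequence $T=T_0,T_1,\dots,T_r$ with $\{a,b\}\in T_r$, and assume inductively that $v$ is determined by $v|_T$ on every edge of $T_0,\dots,T_{r-1}$ (the base case $r=0$ is trivial). The final flip replaces some $\{c,d\}\in T_{r-1}$ by $\{a,b\}$, and the four remaining edges of the quadrilateral around $\{c,d\}$ all lie in $T_{r-1}$; thus all five entries needed to invoke the identity above at the $4$-set $\{a,c,b,d\}$ are already known, and the identity returns $v_{a,b}$. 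Injectivity of $\pi$ then follows immediately: any two vectors $v,v'\in\PVplus{1}{n}$ agreeing on $T$ must coincide on every coordinate, since the same recursion produces the same value.

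The main point that looks like it could cause trouble---path-independence of the reconstruction---is not actually an obstacle. Because we start with an honest $v\in\PVplus{1}{n}$ that satisfies the tropical Pfaffian identity at every $4$-subset, every admissible flip path is forced to return the same correct value of $v_{a,b}$; there is nothing to verify about well-definedness in the abstract, because at each stage the recursion merely reads off the value of the given $v$ rather than constructing it from scratch.
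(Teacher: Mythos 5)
Your proof is correct and hinges on the exact same identity as the paper's: for a $4$-set $\{a,c,b,d\}$ with $\{a,b\}$ the crossing pair, fp-positivity gives $v_{a,b}+v_{c,d}\ge\max\{v_{a,c}+v_{b,d},v_{a,d}+v_{b,c}\}$, membership in the prevariety forces a tie for the maximum, and the two together yield $v_{a,b}=\max\{v_{a,c}+v_{b,d},v_{a,d}+v_{b,c}\}-v_{c,d}$. Where you differ from the paper is only in the bookkeeping: you induct on the flip distance from $T$ to a triangulation containing $\{a,b\}$ (invoking connectivity of the flip graph), while the paper inducts on the number of triangles of $T$ crossed by $\{a,b\}$ and always applies the identity to the quadrilateral $\{i,j,k,l\}$ where $\{k,i,l\}$ is the first triangle of $T$ that $\{i,j\}$ enters at $i$. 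The paper's choice is slightly more self-contained (no appeal to flip connectivity, and the recursion stays inside the single seed triangulation $T$), but your flip-distance argument is equally valid and perhaps more transparent about where the formula comes from --- the flip is literally the move that introduces the unknown edge. Your closing remark that path-independence is not an issue because you are reading off coordinates of an honest $v\in\PVplus{1}{n}$ (rather than building a candidate vector) is the right observation and is what makes the strong induction go through cleanly.
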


\begin{proof}
Let $v \in \PVplus{1}{n}$ and let us see that we can recover the entry $v_{i,j}$ for any $\{i,j\} \in \binom{[n]}2$, knowing the entries of $v$ corresponding to edges of $T$.

The proof is by induction on the number of triangles of $T$ crossed by $\{i,j\}$. If only two triangles are crossed, then $\{i,j\}$ is the only unknown entry from the quadruple $U=\{i,j,k,l\}$ consisting of those two triangles, and the edges $\{i,j\}$ and $\{k,l\}$ cross. Since $d \in \PVplus{1}{n}$, we have that the maximum weight among the three matchings in $U$ is attained by $\{ij,kl\}$ and at least one of the other two matchings, so we can write:
\[
v_{i,j} = \max\{v_{i,k}+v_{j,l}, v_{i,l}+v_{j,k}\} - v_{k,l}.
\]

If $\{i,j\}$ crosses more than two triangles, let $\{k,i,l\}$ be the triangle incident to $i$ and crossed by $\{i,j\}$. By inductive hypothesis, all the entries among the $4$-tuple $\{i,j,k,l\}$ are known except for the entry $\{i,j\}$, so we can recover $v_{i,j}$ with the same formula as above.
\end{proof}

That is, $\pi$ embeds $\PVplus{1}{n}$ as a full-dimensional fan $\pi(\PVplus{1}{n})\subset \R^{T}\cong \R^{2n-3}$. We are interested in a second projection
\[
\phi: \R^{T} \to \R^{\overline T} \cong \R^{n-3}
\]
that sends the irrelevant face of $\pi(\PVplus{1}{n})$ to zero, so that $\phi(\pi (\PVplus{1}{n})) $ is a fan isomorphic to the link of the irrelevant face in $\pi(\PVplus{1}{n})$, that is, isomorphic to $\OvAss{1}n$, the normal fan of the associahedron. Here, $\overline T$ denotes the relevant part (the $n-3$ diagonals) of $T$.

\begin{corollary}
    \label{coro:fan}
    The projection 
   \[
   \phi\circ\pi: \PVplus{1}{n} \to \R^{\overline T} \cong \R^{n-3}
   \]
    gives a realization of the associahedron $\OvAss{1}n$ as a complete fan.
\end{corollary}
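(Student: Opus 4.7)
The plan combines Lemma~\ref{lemma:projection} with Corollary~\ref{cor:V+}. By the former, $\pi$ is a linear injection on $\PVplus{1}{n}$, so $\pi(\PVplus{1}{n})$ is a simplicial fan in $\R^{T}\cong \R^{2n-3}$ with the same combinatorial structure as $\PVplus{1}{n}$. By the latter, this combinatorial structure coincides with that of the cone over the extended associahedron $\Ass{1}{n}$, so the maximal cones of $\pi(\PVplus{1}{n})$ are indexed by triangulations $T'$ of the $n$-gon. For $k=1$, the irrelevant edges (the $n$ sides of the $n$-gon) are absorbed into the lineality $L_n$ of $\PVplus{1}{n}$ (by Theorem~\ref{thm:groebner-cone}), so the irrelevant face of $\pi(\PVplus{1}{n})$ is precisely $\pi(L_n)$, an $n$-dimensional subspace of $\R^{T}$. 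The map $\phi$ was introduced to annihilate exactly this subspace and to identify the quotient $\R^{T}/\pi(L_n)$ with $\R^{\overline{T}}\cong \R^{n-3}$; hence $\phi\circ\pi$ descends to a map on $\PVplus{1}{n}/L_n$, whose combinatorial structure is the link of the irrelevant face in $\Ass{1}{n}$, namely $\OvAss{1}{n}$.

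What remains is to show that the resulting fan in $\R^{n-3}$ is \emph{complete}. Two sub-claims suffice: (a) for every triangulation $T'$, the maximal cone $C_{T'}/L_n$ projects under $\phi\circ\pi$ to a full-dimensional simplicial cone in $\R^{n-3}$; and (b) the union of these image cones fills $\R^{n-3}$. For (a), the cone $C_{T'}/L_n$ is generated by the $n-3$ rays corresponding to the diagonals of $T'$, so one must verify that their images under $\phi\circ\pi$ are linearly independent; this is the key non-degeneracy statement. Once (a) is in hand, (b) follows from the fact that $\OvAss{1}{n}$ is a shellable simplicial sphere (Jonsson~\cite{Jonsson1}): a simplicial fan realizing the cone over a sphere of the right dimension, with all maximal cones full-dimensional in the ambient space, must be complete.

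The main obstacle is the linear-independence step~(a) for triangulations $T'$ distinct from the seed $T$. A natural strategy is to induct on the flip distance between $T$ and $T'$. For $T' = T$ the claim is immediate, since $\phi$ literally picks out the diagonal coordinates of the seed. Each subsequent flip replaces one diagonal by another crossing it, producing a ray which by Proposition~\ref{prop:d_to_w} is obtained from the old one by a short explicit change-of-basis; one verifies that this update preserves linear independence of the ray system in $\R^{n-3}$. This combinatorial/linear-algebraic bookkeeping, carried out in detail in Proposition~\ref{prop:poly}, provides the desired non-degeneracy and thus completes the proof.
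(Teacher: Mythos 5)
The paper's own proof is short and topological: after normalizing, $\phi\circ\pi$ is a continuous map from the $(n-4)$-sphere $\OvAss{1}{n}$ to the unit sphere in $\R^{n-3}$, and Lemma~\ref{lemma:projection} makes it injective; an injective continuous self-map of a sphere is a homeomorphism, so the image fills the sphere and the fan is complete. Your proposal attempts a different, combinatorial route, and while the skeleton is plausible, there are two genuine gaps.

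First, your sub-claim~(a) --- that the image of each maximal cone is full-dimensional --- is not established. You propose to prove it by induction on flip distance and then say the ``combinatorial/linear-algebraic bookkeeping'' is ``carried out in detail in Proposition~\ref{prop:poly}.'' It is not: Proposition~\ref{prop:poly} assumes the fan is already known to be a complete fan with given rays and produces valid right-hand sides; it nowhere checks linear independence of the $n-3$ rays in a maximal cone, and the flip-distance induction you sketch is not actually done anywhere in the paper. (In fact you already hold the ingredient you need: Lemma~\ref{lemma:projection} gives injectivity of $\pi$, and once one checks that $\ker\phi$ meets the linear span of $\pi(\PVplus{1}{n})$ precisely in $\pi(L_n)$, the map $\phi\circ\pi$ is injective modulo lineality, so a full-dimensional cone of $\PVplus{1}{n}/L_n$ must map to a full-dimensional image. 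The flip induction is an unnecessary detour, and citing Proposition~\ref{prop:poly} for it is a misattribution.)

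Second, your sub-claim~(b) invokes \emph{shellability} of $\OvAss{1}{n}$, but shellability plays no role: what is actually used in any version of this argument (yours or the paper's) is that the complex is a pseudomanifold sphere, i.e.\ every codimension-one face lies in exactly two facets, plus injectivity of the conewise-linear map so that cones do not overlap. Shellability is strictly stronger and irrelevant here, and an appeal to it obscures the fact that the argument cannot go through on full-dimensionality alone without the no-overlap condition supplied by injectivity. Once you keep injectivity in the picture, the paper's one-line topological argument is cleaner than the piecewise geometric one you sketch; if you prefer the combinatorial route, you must spell out (a) directly from injectivity rather than defer it, and replace the shellability citation by the pseudomanifold/sphere property together with injectivity.
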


\begin{proof}
    This projection is conewise linear (linear in each cone). After normalizing, it becomes a continuous map from the $(n - 4)$-dimensional sphere $\OvAss{1}{n}$ to the unit sphere in $\R^{n - 3}$ and, by Lemma~\ref{lemma:projection}, it is injective. Since every injective continuous map from a sphere to itself is a homeomorphism, $\phi(\pi(\PVplus{1}{n}))$ is a  complete fan.
\end{proof}

\begin{remark}
Lemma~\ref{lemma:projection} and its Corollary~\ref{coro:fan} do not hold for $k\ge 2$. In fact, suppose we take $T$ to be any $k$-triangulation containing all the edges of the form $(1,i)$ and $(2,i)$, which exists since $k\ge 2$. Consider now the cone corresponding to a $k$-triangulation $T'$ that does not use a certain edge $(1,i)$. In this cone we have $w_{1,i}=0$ and hence
\[
v_{1,i}+v_{2,i+1} = v_{1,i+1} + v_{2,i}.
\]
Thus, the projection $\pi$ is not injective; it collapses the cone of $T'$ to lower dimension. 
\end{remark}

We now want to give a more explicit description of the fans in \ref{coro:fan}, that is, explicit coordinates for the ray corresponding to each diagonal $\{i,j\}\in \binom{[n]}2$. For this we define the following $\g$-vector of $\{i,j\}$ with respect to the seed triangulation $T$. 

\newcommand{\zig}{ {{\mathsf Z}}}
\newcommand{\zag}{ \reflectbox{{$\mathsf Z$}}}

Remember that $T$ is embedded as a true triangulation using the vertices of our $n$-gon, while the diagonals $\{a,b\}\in \binom{[n]}2$ corresponding to coordinates in our ambient space correspond to pairs of sides. For any given diagonal $\delta$
we define the following \emph{crossing sign} of $\{a,b\}$ with respect to $\delta$ and the $\g$-vector of $\{a,b\}$ with respect to $T$ as follows:

\begin{definition}[See \protect{\cite[Proposition 33]{HPS}} or \protect{\cite[Definition 1.1]{HPS-fpsac}}]
Let $\delta$ be a diagonal in $\overline T$ and $\{a,b\}\in \binom{[n]}2$. Let $q(\delta)$ be the quadrilateral in $T$ consisting of $\delta$ and its two adjacent triangles. We define the \emph{crossing sign of $\{i,j\}$ with respect to $\delta$ in $T$}
\[
\varepsilon(\delta\in T, \{a,b\}):=
\begin{cases}
  +1 & \text{ if $\{a,b\}$ crosses $q(\delta)$ as a $\zig$}\\
  -1 &\text{ if $\{a,b\}$ crosses $q(\delta)$ as a $\zag$}\\
  0 & \text{otherwise}
\end{cases}
\]

We define the \emph{$\g$-vector of $\{a,b\}$ with respect to $T$} as 
\[
\g(T,\{a,b\}) := (\varepsilon(\delta\in T,\{a,b\}))_{\delta\in \overline{T}}
\]
\end{definition}

Observe that the ``otherwise'' in the definition of the crossing sign includes all cases in which $\delta$ does not separate $a$ from $b$, but it also includes the case in which $\{a,b\}$ ``cuts a corner'' of $q(\delta)$. Put differently, $\varepsilon(\delta\in T, \{a,b\})$ is nonzero if and only if two opposite sides of the quadrilateral $q(\delta)$ separate $a$ from $b$, and its sign depends on which two.

For example, for the following triangulation and the edge $\{2,6\}$, we have
\begin{center}
    \begin{align*}
	    \varepsilon(\{1,3\}\in T,\{2,6\})=\varepsilon(\{0,5\}\in T,\{2,6\}) & =1 \\
	    \varepsilon(\{0,3\}\in T,\{2,6\}) & =-1 \\
	    \varepsilon(\{0,4\}\in T,\{2,6\})=\varepsilon(\{5,7\}\in T,\{2,6\}) & =0
	\end{align*}
	\begin{tikzpicture}[inner sep=0.2ex]
	\node[label=0:0] (p0) at (0:1.8cm) {$\bullet$};
	\node[label=45:1] (p1) at (45:1.8cm) {$\bullet$};
	\node[label=90:2] (p2) at (90:1.8cm) {$\bullet$};
	\node[label=135:3] (p3) at (135:1.8cm) {$\bullet$};
	\node[label=180:4] (p4) at (180:1.8cm) {$\bullet$};
	\node[label=225:5] (p5) at (225:1.8cm) {$\bullet$};
	\node[label=270:6] (p6) at (270:1.8cm) {$\bullet$};
	\node[label=315:7] (p7) at (315:1.8cm) {$\bullet$};
	\draw (p0.center)--(p4.center) (p0.center)--(p3.center)--(p1.center) (p0.center)--(p5.center)--(p7.center);
	\draw (p0.center)--(p1.center)--(p2.center)--(p3.center)--(p4.center)--(p5.center)--(p6.center)--(p7.center)--(p0.center);
	\draw[color=red] (70:1.8cm) node[label=70:{$a=2$}] {$\bullet$} -- (250:1.8cm) node[label=250:{$b=6$}] {$\bullet$};
	\end{tikzpicture}
\end{center}
\begin{remark}
\label{rem:g-vectors} 
\begin{itemize}
    \item $\g(T,\{a,a+1\}) =0$ for every $a$, since $\{a,a+1\}$ cannot cross two opposite sides of any quadrilateral.
    \item If $\{a,b\}$ is in $\overline T$ then $\varepsilon(\delta\in T, \{a,b\}) =1$ if $\delta=\{a,b\}$ and is zero otherwise. 
    Similarly $\varepsilon(\delta\in T, \{a+1,b+1\}) =-1$ if $\delta=\{a,b\}$ and is zero otherwise. 
    
    Thus, in this case the  $\g$-vector $\g(T,\{a,b\})$
    equals the corresponding standard basis vector and $\g(T,\{a+1,b+1\})$ equals its opposite.
    
    \item In general, $\g(T,\{a,b\})$ has the following interpretation: The edges of $T$ crossed by $\{i,j\}$ form an \emph{accordion} in the sense of Section~\ref{sec:balanced}. The signs in the vector $\g(T,\{a,b\})$ record at which edges the accordion turns left or right. In particular, the $\g$-vector is zero for edges of $T$ that are not in the accordion, but also for those in which the accordion `does not turn'.
\end{itemize}
\end{remark}

This definition of $\g$-vectors, which we have taken from
Hohlweg, Pilaud and Stella~\cite{HPS}, is a specialization to the disc of the \emph{shear coordinates} described for arbitrary surfaces in \cite{FomThu}.
They consider the $\g$-vector fan obtained considering as cones all the possible clusters (which, in type $A$ are the triangulations) and taking as generators the $\g$-vectors for a fixed but arbitrary seed triangulation $T$. The main result of~\cite{HPS} is that these fans are polytopal. It turns out that these fans are linearly isomorphic to the ones of Corollary \ref{coro:fan}:

\begin{theorem}
\label{thm:g-vector}
Let $\Sigma_T = im(\phi\circ \pi)$ be the associahedral fan in $\R^{n-3}$ of Corollary \ref{coro:fan} for a certain seed triangulation $T$.

In the basis of $\R^{n-3}$ consisting of the rays corresponding to the diagonals of $\overline T$ we have that $\Sigma_T$ equals the $\g$-vector fan of $T$.
\end{theorem}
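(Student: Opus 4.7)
The plan is to reduce the theorem to matching rays of the two fans one at a time. By Corollary~\ref{coro:fan}, $\Sigma_T$ is a complete simplicial fan on $\R^{n-3}$ whose maximal cones are indexed by triangulations of the $n$-gon, and the $\g$-vector fan of $T$ shares the same property. In the basis specified by the theorem, the cone corresponding to $T$ coincides with the positive orthant in both fans: for $\Sigma_T$ this is tautological, and for the $\g$-vector fan it follows from Remark~\ref{rem:g-vectors}, which gives $\g(T,\delta) = e_\delta$ for $\delta \in \overline T$. It therefore suffices to check that each non-seed ray of $\Sigma_T$ matches the corresponding $\g$-vector.

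I would proceed by induction on the minimum number of diagonal flips needed to reach a triangulation containing a target diagonal $\{a,b\}$ starting from $T$; the base case $\{a,b\} \in \overline T$ is already handled. For the inductive step, I would use the tropical $3$-term Pl\"ucker relation $v_{a,c} + v_{b,d} = \max\{v_{a,b}+v_{c,d},\ v_{a,d}+v_{b,c}\}$ associated with the quadrilateral whose cyclic vertices are $a,b,c,d$, obtained when flipping $\delta = \{a,c\}$ to $\delta' = \{b,d\}$. On the facet of $\PVplus{1}{n}$ shared by the cones of the two adjacent triangulations the tropical maximum is attained by both sides of the relation, yielding an explicit linear flip identity among $R_\delta$, $R_{\delta'}$, and the rays $R_E$ indexed by the four sides $E$ of the quadrilateral. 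Pushing this forward by $\phi \circ \pi$ gives a linear identity in $\R^{\overline T}$.

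I would then verify that this flip identity matches the mutation rule for $\g$-vectors of cluster algebras of type $A$ as described by Hohlweg-Pilaud-Stella. Concretely, mutation of the $k$-th cluster variable acts on its $\g$-vector by $\g(T,\delta') = -\g(T,\delta) + \sum_i [b_{i,k}]_+\, \g(T,E_i)$, and for polygon triangulations the collection of sides of $q(\delta)$ already provides the correct positive part of the exchange matrix. Combined with the base case, this propagates through the induction and gives the equality of the two fans ray by ray.

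The hard part will be aligning the sign conventions of the tropical Pl\"ucker flip with the mutation rule: for each flipped quadrilateral one must determine which opposite pair of sides contributes to $R_\delta + R_{\delta'}$, verify that this matches the positive part of the exchange matrix at that flip, and in particular recover the sign of the ``rotated'' edge $\{a+1,b+1\}$ in Remark~\ref{rem:g-vectors}. Once this correspondence is established at a single flip, it becomes automatic at every subsequent flip by the uniform structure of the flip laws in both fans, so the induction closes.
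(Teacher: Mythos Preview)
Your strategy---propagate from the seed by flip relations and match them to the $\g$-vector mutation rule---is genuinely different from the paper's approach, and it can in principle be made to work, but as written it has two real gaps.

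First, the tropical Pl\"ucker relation is a condition on the $v$-\emph{coordinates} of points in $\PVplus{1}{n}$; it does not by itself produce a linear relation among the \emph{rays} $R_E = \phi\circ\pi(W_E)$. What you actually need is the unique linear dependence among the $n-2$ rays of two adjacent maximal cones. The paper obtains this (in the proof of Proposition~\ref{prop:poly}) from the explicit identity $W_{i,j} = \sum V_{a,b}$ over edges separated by $\{i,j\}$: for a quadrilateral on $a<b<c<d$ one finds either $\overline W_{ac}+\overline W_{bd}=\overline W_{ad}+\overline W_{bc}$ or $\overline W_{ac}+\overline W_{bd}=\overline W_{ab}+\overline W_{cd}$, depending on whether $T$ has no edge separating $\{a,b\}$ from $\{c,d\}$ or no edge separating $\{b,c\}$ from $\{a,d\}$. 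Your proposal does not explain how to extract this dichotomy from the tropical relation alone.

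Second, and more seriously, the ``hard part'' you flag is essentially the whole theorem. In the $\g$-vector mutation $\g(T,\delta')=-\g(T,\delta)+\sum_i[b_{ik}]_+\g(T,E_i)$, the matrix $b$ is the exchange matrix at the \emph{current} cluster, and which pair of opposite sides contributes is governed by the sign of the corresponding $c$-vector, which changes along the flip sequence. Your claim that checking a single flip makes all others ``automatic by the uniform structure'' is unjustified: at each step you would need to show that the combinatorial criterion above (which pair of opposite sides is not separated by any edge of $T$) agrees with the $c$-vector sign at that cluster. Note also that the two candidate values for $R_{\delta'}$ differ by a vector in the span of the wall, so ``being on the correct side of the wall'' does not single out the right one; you really do need the sign match at every flip.

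By contrast, the paper avoids all inductive bookkeeping. It works directly with $W_{i,j}=\sum V_{a,b}$, first proving $\overline W_{i,j}=-\overline W_{i+1,j+1}$ for diagonals of $T$, and then, for an arbitrary $\{a,b\}$, reading off the coordinates of $\overline W_{a,b}$ in the basis $\{\overline W_\delta:\delta\in\overline T\}$ from the inflection points of the accordion that $\{a,b\}$ traces through $T$. This yields Equation~\eqref{eq:zigzag} in one combinatorial step, with no appeal to mutation formulas or sign-coherence.
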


\begin{proof}
    For each $(i,j)\in \binom{[n]}2$ let  $W_{i,j}$ be the generator of the orthant $\Sep_n$ corrresponding to a certain $\{i,j\}$. That is, $W_{i,j} = d(w)$ for the vector $w$ with $w_{i,j}=1$ and $w_{i',j'}=0$ if $\{i',j'\}\ne \{i,j\}$. We think of $W_{i,j}$ as the standard basis vector in the coordinates $w_{i,j}$, and let $V_{i,j}$ be the standard basis vector in the coordinates $v_{i,j}$ that we have been using so far. The $W_{i,j}$ are also the generators for the fan structure in $\PVplus{k}{n}$, so that $\phi\circ\pi(W_{i,j})$ is the corresponding generator of $\phi\circ\pi(\PVplus{1}{n})$.

The relations in Definition~\ref{defi:d(w)}, which express the coordinates $v$ in terms of the coordinates $w$, get transposed to the following relations among the vectors $W_{i,j}$ and $V_{a,b}$:
\begin{align}
\label{eq:W}
W_{i,j} = \sum_{\substack{\{a,b\}\in \binom{[n]}2\\
i < b \le j < a \le i}}
V_{a,b}.
\end{align}

Observe that the projections $\pi$ and $\phi$ are defined, respectively, by what they do to the vectors $V$ and $W$, respectively. $\pi$ sends 
 $V_{i,j}$ to zero if $\{i,j\}\not\in T$, and $\phi$ sends $\pi(W_{i,i+1})$ to zero for every $i$.
For simplicity, for each vector $V\in \R^{\binom{[n]}2}$ we will denote by $\overline V :=\phi(\pi(V))\in \R^{n-3}$, and the same for $\overline W$.

    Let $\{i,j\}$ be a diagonal of $T$. We then have
    \[
    \overline W_{i,j} + \overline W_{i+1,j+1} =
    \overline W_{i,i+1} + \overline W_{j,j+1} =
    0,
    \]
    where the first equality comes from Equations~\eqref{eq:W} taking into account that the only edges of $T$ crossing $\{i,j\}$ or $\{i+1,j+1\}$ are those with an end-point in $i$ or $j$, and each of them crosses 
    $\{i,j\}$ and $\{i+1,j+1\}$ the same number of times as it crosses $\{i,i+1\}$ or $\{j,j+1\}$. (Namely, they all cross once except for the edge $\{i,j\}$ which crosses twice). The second equality comes from the fact that $\phi(\pi(W_{i,i+1})=0$ for every $i$. 
    Thus we have
    \[
    \overline 
    W_{i,j} = - \overline W_{i+1,j+1}
    \]
    for each diagonal $\{i,j\}$ of $T$.
    
    Now,
    let $a$ and $b$ be two sides of the $n$-gon and consider the accordion in $T$ between $a$ and $b$. Let $\{i_1,j_1\}, \dots,  \{i_\ell,j_\ell\}$ be the diagonals of $T$ at which the accordion has an ``inflection point'' (it changes from turning left to turning right, or viceversa, that is, $\{a,b\}$ crosses $\{i_m,j_m\}$ as a $\zig$ or a $\zag$ alternatively). 
    The statement we want to prove is that
    \begin{align}
    \label{eq:zigzag}
    \overline W_{a,b} =\sum_{\delta\in \overline{T}}\varepsilon(\delta\in T,\{a,b\})\overline W_\delta= \sum_m \varepsilon(\{i_m,j_m\}\in T,\{a,b\}) \overline W_{i_m,j_m}
    \end{align}
    Note that $-\overline W_{i_m,j_m}$ equals $\overline W_{i_m+1,j_m+1}$, so we are taking the sum of the edges in the zigzag turned in the direction of the path. Indeed, the sum in the right-hand side includes three times the diagonals $\{i_m,j_m\}$, twice the rest of diagonals in the accordion and once the rest of edges with an end-point in vertices where an $\{i_m,j_m\}$ meets the next one.
    Subtracting the irrelevant $\overline{W}$'s for these vertices, we get exactly once the diagonals separating $a$ and $b$, and only them. 
\end{proof}

Although, as said above, polytopality of the $\g$-vector fans is proven in \cite{HPS}, we include here a proof for completeness:

\begin{proposition}[\cite{HPS}]
\label{prop:poly}
For every $T$, the fan $\Sigma_T$ is polytopal.
\end{proposition}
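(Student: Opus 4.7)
The plan is to exhibit explicit right-hand sides $h_{a,b}\in\R$, one for each edge $\{a,b\}\in\binom{[n]}{2}$, such that the polytope
\[
P_T\ :=\ \{x\in\R^{n-3}\ :\ \g(T,\{a,b\})\cdot x\le h_{a,b}\ \text{for all }\{a,b\}\}
\]
has $\Sigma_T$ as its outer normal fan. By Theorem~\ref{thm:g-vector}, the maximal cones of $\Sigma_T$ are indexed by triangulations $T'$ of the $n$-gon, with the cone for $T'$ spanned by the basis $\{\g(T,\delta'):\delta'\in\overline{T'}\}$. Hence the candidate vertex of $P_T$ attached to $T'$ is the unique point $v_{T'}\in\R^{n-3}$ satisfying $\g(T,\delta')\cdot v_{T'}=h_{\delta'}$ for every $\delta'\in\overline{T'}$, and our task reduces to choosing $h$ so that each $v_{T'}$ lies in $P_T$, with equality in exactly the right set of inequalities.

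For the choice of $h_{a,b}$, my first guess would be a strictly concave combinatorial function of the accordion in $T$ between the two sides indexed by $a$ and $b$ (recall from Remark~\ref{rem:g-vectors} that this accordion encodes $\g(T,\{a,b\})$). Natural candidates are the length of the accordion, or quantities like the number of pairs of triangles of $T$ separated by $\{a,b\}$. The structural information in Theorem~\ref{thm:catalan-many}, namely that facet normals lie in $\{0,\pm 1\}^{n-3}$ in the basis dual to $\overline T$ and come in $n-3$ parallel pairs indexed by $\{a,b\}\leftrightarrow\{a+1,b+1\}$, should strongly constrain the admissible formulas and make the right one recognizable. A convenient sanity check is that along each pair of parallel facets the defining inequalities force $h_{a,b}+h_{a+1,b+1}$ to be a fixed constant determined by the vectors in $L_n$, so the essentially free parameters are indexed by $\overline T$ together with a global translation.

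Polytopality then reduces to the wall-crossing (flip) inequalities: for every pair $(T_1,T_2)$ of triangulations related by exchanging $\delta\in T_1$ for $\delta'\in T_2$ across a quadrilateral $q\subset T_1\cup T_2$, the linear dependence $\g(T,\delta)+\g(T,\delta')=\sum_i c_i\,\g(T,\delta_i)$ over the two diagonals $\delta_i\in\overline{T_1\cap T_2}$ bounding $q$ must be promoted to the strict inequality $h_\delta+h_{\delta'}>\sum_i c_i h_{\delta_i}$. The main obstacle is that the coefficients $c_i$ depend on how $q,\delta,\delta'$ interact with the seed $T$: either of $\delta,\delta'$ can lie in $\overline T$, be irrelevant, or be a non-$T$ diagonal, and the accordions that they determine can turn left or right through $q$ in several patterns. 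I expect a short case analysis, organized according to whether $\delta$ and $\delta'$ turn the accordion of $T$ left or right at $q$, to show that a suitably concave choice of $h$ verifies the flip inequality in every case simultaneously, completing the proof.
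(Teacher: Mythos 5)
Your framework is exactly the one the paper uses: reduce polytopality of the complete fan $\Sigma_T$ to the wall-crossing (flip) inequalities, observe that every circuit between adjacent maximal cones is supported on the $K_4$ of a flip quadrilateral (the two flipped diagonals plus the four boundary edges, of which only two carry nonzero coefficient in the dependence), and then try to choose right-hand sides satisfying all of these strict inequalities at once. The gap is that the hard part is not done: you never commit to a formula for $h_{a,b}$ and never verify a single flip inequality. Your proposed candidates (length of the accordion in $T$, number of pairs of triangles of $T$ separated by $\{a,b\}$) are all $T$-dependent, and you have not shown that any of them is ``suitably concave'' in the required sense. The paper's key simplification is that a $T$-\emph{independent} choice works: $b_{i,j}=(j-i)(n+i-j)$, a function only of the cyclic gap $j-i$, not of the seed triangulation. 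With this choice, for a flip quadrilateral on $\{a<b<c<d\}$ the circuit relation is either $\overline W_{ac}+\overline W_{bd}=\overline W_{ad}+\overline W_{bc}$ or $\overline W_{ac}+\overline W_{bd}=\overline W_{ab}+\overline W_{cd}$ depending on how $T$ sits relative to the quadrilateral, and the corresponding inequality reduces after cancellation to $2(d-c)(b-a)>0$ or $2(c-b)(n+a-d)>0$ respectively, both manifestly true. Without an explicit $h$ and this two-case check, the argument is a plan rather than a proof.

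A smaller issue: your ``sanity check'' that $h_{a,b}+h_{a+1,b+1}$ should be a fixed constant over the $n-3$ parallel pairs is not forced by anything, and in fact fails for the paper's choice (there one gets $2(b-a)(n-b+a)$, which depends on the pair). Boundedness of the polytope only requires this sum to be positive for each pair, not constant across pairs. This is another indication that the constraints you imagined guiding you toward the right $h$ are weaker than you hoped, which is precisely why the explicit formula and verification are the substance of the proposition.
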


\begin{proof}
    Once we have explicit vectors $\overline W_{i,j}$ (in the basis given by the diagonals of $T$) for the rays in our fan, it remains to find right-hand sides $(b_{ij})_{i,j} \in \R^{\binom{[n]}2}$ 
    for the equations 
    \[
    \overline W_{i,j} X \le b_{ij}, 
    \]
    to get a realization of the polytope with this normal fan. Remember that a right-hand side vector $(b_{ij})_{i,j}$ is valid for a given complete fan if and only if for every pair of adjacent facets in the fan, we have that 
    \begin{equation}\label{ineq}
		\sum_{(i,j)\in C}\omega_{ij}(C)b_{ij}>0,
	\end{equation}
	where $\omega_{ij}$ are the coefficients of the unique circuit $C$ contained in the generators of those two cones, with the sign of $\omega$ chosen positive in the rays that are not common to the two cones. See, e.g., \cite[Section 5]{CSZ} for details.
	
	We are going to show that the choice of right-hand sides $b_{i,j} = (j-i)(n+i-j)$ (where we assume $i<j$) is valid.

     In the associahedral fan all circuits contained in adjacent pairs are supported on the $K_4$ graph consisting of the diagonals that are flipped and the convex quadrilateral containing them. 
     
     Given such a $K_4$ with vertices $\{a,b,c,d\}$
     with $a<b<c<d$, either there is no edge in $T$ with an end between $a$ and $b$ and another between $c$ and $d$, or there is no edge with an end between $b$ and $c$ and another between $a$ and $d$. 
     
     Suppose the first case holds. Then all the diagonals in $T$ cross the pairs $ac$ and $bd$ the same number of times than $ad$ and $bc$. $\overline W_{ac}$ is the sum of the $\overline D$ for the diagonals in $T$ that cross $ac$, and the same for the other pairs. This implies
	\[
	\overline{W}_{ac}+\overline{W}_{bd}=\overline{W}_{ad}+\overline{W}_{bc}
	\]
	Thus,  (\ref{ineq}) becomes
	\[(c-a)(n-c+a)+(d-b)(n-d+b)-(d-a)(n-d+a)-(c-b)(n-c+b)>0\]
	which reduces to $2(d-c)(b-a)>0$.

	In the second case, a similar argument rewrites (\ref{ineq}) as
	\[(c-a)(n-c+a)+(d-b)(n-d+b)-(b-a)(n-b+a)-(d-c)(n-d+c)>0\]
	which reduces to $2(c-b)(n+a-d)>0$.
\end{proof}

\begin{remark}
The construction in this proof is the same one of  \cite{HPS} taking as ``exchange submodular'' function the function $(j-i)(n-j+i)$. In particular, the vertices obtained for the associahedra are the 
$\mathbf{c}$-vectors, also introduced implicitly in \cite{FomZel}.
\end{remark}

\begin{remark}
From the perspective of cluster algebras, associahedra are the type $A$ case of the \emph{generalized associahedra} that Fomin and Zelevinsky defined as simplicial spheres and 
 F. Chapoton, S. Fomin and A. Zelevinsky \cite{CFZ} constructed as polytopes, using the so-called $\mathbf{d}$-vector fans for certain seed clusters. %
In type $A$, this construction was generalized by Santos~\cite[Section 5]{CSZ} to obtain Catalan-many associahedra by showing that any triangulation works as seed triangulation in the $\mathbf{d}$-vector construction.
 
 The construction of generalized associahedra via $\g$-vectors instead of $\mathbf{d}$-vectors was first achieved in various special cases by, among others,  Hohlweg-Lange-Thomas~\cite{HLT}, Pilaud-Stump~\cite{PilStu}
 and Stella~\cite{Stella}, before the general case was settled by Hohlweg, Pilaud and Stella in \cite{HPS}.
 
 The associahedral fans obtained obtained by Santos via $\mathbf{d}$-vector fans and by Hohlweg-Pilaud-Stella via $\g$-vector fans from a seed triangulation $T$ have certain similarities:
 
\begin{enumerate}
    \item For each of the $n-3$ diagonals $\{i,j\}\in \overline T$, the ray corresponding to  $\{i,j\}$ is opposite to another ray. That is, the corresponding facets in the associahedron are parallel.

    \item Every other ray can be expressed as a $\{+1,0,-1\}$ combination in the basis given by those $n-3$ rays.
\end{enumerate}

However, they are not the same. In the $\g$-vector fan the ray opposite to a diagonal $\{i,j\}$ of $T$ is $\{i+1,j+1\}$ while in the $\mathbf d$ construction it is the diagonal inserted by the flip of $\{i,j\}$ in $T$. \end{remark}

Summing up, both constructions provide Catalan-many $(n-3)$-asso\-cia\-hedra with $n-3$ pairs of opposite facets, but these pairs are different.
Moreover, this construction has the exponential family of realizations obtained by Hohlweg and Lange \cite{HL} as a subset: they are the realizations obtained when the seed triangulation $T$ does not have interior triangles, that is, when its dual graph is a path, or the initial seed for the \g-vector fan is acyclic.

One could think that there is a variant of \g-vectors for $k>1$. For example, for $k=2$ it is known that multitriangulations are complexes of 5-sided stars \cite{PilSan}, and a \g-vector can be defined assigning different values for $\varepsilon(\{i,j\}\in T,\{a,b\})$ depending on the position of $\{a,b\}$ with respect to the two stars incident to $\{i,j\}$. A priori, the problem would be how to define these $\varepsilon(\{i,j\}\in T,\{a,b\})$ so that they work.
If the two edges cross, there are 4 possible positions for $a$ and the same number for $b$, giving 16 different positions, and the idea would be to use different coefficients as $\varepsilon$ depending on which of the 16 possibilities (or 10, if we mod out symmetry) we are in.

However, this idea only works for concrete cases, and it can not work for $n$ big enough.
\begin{theorem}
For a $k$-triangulation $T$, with $k>1$, if there is an edge not contained in any pair of adjacent stars of $T$, it is impossible to realize the $k$-associahedron as a \g-vector fan, independently of the values chosen for $\varepsilon$.
\end{theorem}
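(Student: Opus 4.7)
The plan is to argue by the locality of the $\varepsilon$ function. The entire point of a $\g$-vector construction is that $\varepsilon(\{i,j\}\in T,\{a,b\})$ is a function of the local combinatorial type of the pair $(\{i,j\},\{a,b\})$ inside the neighborhood of $\{i,j\}$ in $T$, where for $k=1$ this neighborhood is the quadrilateral $q(\{i,j\})$ and for $k>1$ it is the union of the two $(k+1)$-stars incident to $\{i,j\}$. To make the statement precise, I would first axiomatize ``local'' in the following way: $\varepsilon(\{i,j\}\in T,\{a,b\})$ depends only on (i) whether $\{a,b\}$ crosses $\{i,j\}$ and (ii) the positions of $a$ and $b$ with respect to the pair of adjacent stars at $\{i,j\}$, sorted into the finitely many regions into which these two stars partition the $n$-gon. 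This matches the $16$-position description the authors sketched just before the theorem.

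Next, I would observe that when $\{a,b\}$ is not contained in any pair of adjacent stars of $T$, then for every $\{i,j\}\in T$ at least one of $a,b$ lies in the ``exterior'' region of the pair of stars at $\{i,j\}$. In particular, the local type of $\{a,b\}$ at each $\{i,j\}\in T$ is determined entirely by data that collapses endpoints in the exterior region to a single ``far away'' label. The core step is then to exhibit a second edge $\{a',b'\}\ne\{a,b\}$ (necessarily also lying outside every pair of adjacent stars) that has the \emph{same} local type as $\{a,b\}$ at every $\{i,j\}\in T$. Cyclically shifting $\{a,b\}$ by one or two vertices, or replacing one of its endpoints by a neighbor that lies in the same exterior region of every pair of adjacent stars, should yield such an $\{a',b'\}$ whenever $n$ is large enough.

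Once such a pair is produced, the conclusion follows: since $\varepsilon$ is local, we have $\varepsilon(\{i,j\}\in T,\{a,b\})=\varepsilon(\{i,j\}\in T,\{a',b'\})$ for every $\{i,j\}\in T$, so $\g(T,\{a,b\})=\g(T,\{a',b'\})$. But $\{a,b\}$ and $\{a',b'\}$ are distinct relevant edges of the $n$-gon and therefore index two distinct rays of the $k$-associahedron; a complete simplicial fan cannot have two distinct rays spanned by the same vector. This contradicts any hope of realizing the $k$-associahedron as a $\g$-vector fan, regardless of which values are assigned to $\varepsilon$, because the contradiction lives entirely at the level of the locality axiom.

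The main obstacle is making the locality axiom airtight: one must argue that any reasonable $\g$-vector scheme is \emph{forced} to be local in this sense, since otherwise $\varepsilon$ could secretly record global information about $\{a,b\}$ and the argument above collapses. I would defend locality by pointing out that it is precisely the property that makes the $k=1$ construction uniform across seed triangulations (compare Remark~\ref{rem:g-vectors} and the definition of shear coordinates), and that without it the phrase ``$\g$-vector fan'' loses its meaning. A secondary technical point is to confirm that the exterior regions around a pair of adjacent stars genuinely contain more than one vertex of the $n$-gon for $n$ sufficiently large, so that the twin edge $\{a',b'\}$ exists; this should reduce to a straightforward count of how many vertices a pair of adjacent $(k+1)$-stars can cover.
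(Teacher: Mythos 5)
Your proposal takes a genuinely different route from the paper, and it has a gap at the key step.

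The paper does \emph{not} try to produce two edges with identical $\g$-vectors. Instead it proves the four-term linear relation
\[
\g(T,\{a,b\})+\g(T,\{a+1,b+1\})=\g(T,\{a,b+1\})+\g(T,\{a+1,b\}),
\]
and then observes that for $k>1$ these four edges are $(k+1)$-free (they use only four vertices) and hence extend to a $k$-triangulation $T'$; the cone of $T'$ in any $\g$-vector fan would then be degenerate. The relation is verified component-by-component: for each $\{i,j\}\in T$ at least one of $a,b$ is outside the pair of stars at $\{i,j\}$; if $a$ is outside then $\varepsilon(\{i,j\},\{a,c\})=\varepsilon(\{i,j\},\{a+1,c\})$ for $c\in\{b,b+1\}$, which makes the two sides of the relation cancel at that coordinate, and symmetrically if $b$ is outside. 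Notice that which endpoint gets shifted \emph{depends on $\{i,j\}$}.

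This is exactly where your twin-edge approach breaks. You want a single $\{a',b'\}$ with the same local type as $\{a,b\}$ at \emph{every} $\{i,j\}\in T$, obtained by shifting an endpoint. But every vertex of the $n$-gon is a vertex of some star of $T$, so for each candidate shift (say $a\mapsto a+1$) there are edges $\{i,j\}\in T$ at which $a$ lies outside the pair of stars while $a+1$ is a vertex of it; at those $\{i,j\}$ the local type changes and the $\varepsilon$-value can change too. Shifting $b$ instead just moves the problem to a different set of $\{i,j\}$'s, and shifting both endpoints simultaneously gives no reason for the two changes to cancel. So the twin $\{a',b'\}$ need not exist, and the ``straightforward count'' you defer to does not close the gap. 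The paper's four-term identity is precisely the device that lets one shift $a$ or $b$ \emph{coordinate-by-coordinate}, avoiding the need for a global twin. Your explicit formulation of the locality axiom for $\varepsilon$ is a nice contribution the paper leaves implicit, but the contradiction mechanism you build on top of it does not go through as stated; you would need to replace the equal-rays argument with the paper's linear-dependence-inside-one-cone argument.
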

\begin{proof}
    Let $\{a,b\}$ be the edge. We'll show that $\g(a,b)+\g(a+1,b+1)=\g(a,b+1)+\g(a+1,b)$. Then, we can choose a $k$-triangulation that contains these edges (for $k>1$ it will exist), and its cone will not have full dimension.
    
    This equality can be checked component by component. For an edge $\{i,j\} \in T$, either $a$ is not in the two stars delimited by $\{i,j\}$ or $b$ is not. In the first case, $\varepsilon(\{i,j\}\in T,\{a,c\})=\varepsilon(\{i,j\}\in T,\{a+1,c\})$ for any $c$, concretely for $c=b$ and $c=b+1$, and the equality holds for this component. The same happens if $b$ is not in the two stars.
\end{proof}
\begin{corollary}
It is impossible to realize the $k$-associahedron as a \g-vector fan, independently of the values chosen for $\varepsilon$, for $k>1$ and $n$ big enough.
\end{corollary}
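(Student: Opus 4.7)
The plan is to reduce this corollary directly to the preceding theorem by exhibiting, for every $k$-triangulation $T$ of an $n$-gon with $n$ sufficiently large, some edge $\{a,b\}\in\binom{[n]}{2}$ not contained in any pair of adjacent stars of $T$. Since a $\g$-vector realization of the $k$-associahedron requires choosing some seed $T$, ruling out every seed simultaneously rules out every realization, independently of the choice of $\varepsilon$.

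The argument is an elementary double count. First I would bound the number of vertices of $[n]$ used by a single pair of adjacent stars: each $(2k+1)$-star uses $2k+1$ vertices of the $n$-gon, and two adjacent stars share at least the two endpoints of their common edge, so the union uses at most $2(2k+1)-2=4k$ vertices. Hence any one pair of adjacent stars contains at most $\binom{4k}{2}$ edges of $K_n$. Next I would bound the number of pairs of adjacent stars by the total number of edges of $T$, which equals $k(2n-2k-1)$ by the Nakamigawa--Dress--Koolen--Moulton count stated in the introduction, since each pair of adjacent stars is determined by the unique shared edge of the two stars.

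Combining these bounds, the number of edges of $K_n$ lying in at least one pair of adjacent stars of $T$ is at most
\[
k(2n-2k-1)\binom{4k}{2},
\]
which is linear in $n$, whereas $\binom{n}{2}$ grows quadratically. Hence for $n$ larger than some explicit threshold $N(k)=O(k^{3})$ there must exist at least one edge $\{a,b\}\in\binom{[n]}{2}$ not contained in any pair of adjacent stars of $T$. The previous theorem then rules out the $\g$-vector realization with seed $T$, and since $T$ was an arbitrary $k$-triangulation the corollary follows.

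I do not foresee a genuine obstacle: both counts are elementary and the substance of the corollary is already packaged into the previous theorem. The only point meriting a little care is the uniform bound of $4k$ on the number of vertices in a pair of adjacent stars, where one should observe that if two adjacent stars happen to share vertices beyond the endpoints of their common edge the bound can only improve, and hence the estimate above remains valid for every $k$-triangulation.
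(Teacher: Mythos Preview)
Your proposal is correct and follows essentially the same counting argument as the paper: bound the number of edges covered by all pairs of adjacent stars by (number of pairs)$\times\binom{4k}{2}$, observe this is linear in $n$ while $\binom{n}{2}$ is quadratic, and invoke the preceding theorem. The only difference is that the paper uses the exact count $k(n-2k-1)$ of relevant edges (which equals the number of pairs of adjacent stars) rather than your looser bound $k(2n-2k-1)$ coming from all edges of $T$; this does not affect the argument.
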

\begin{proof}
    Suppose it is possible. Then all edges must be contained in a pair of adjacent stars. There are as many pairs of adjacent stars as relevant edges in $T$, that is, $k(n-2k-1)$. Each pair contains at most $4k$ vertices that form $\binom{4k}{2}$ edges, so we get
    \[\binom{n}{2}\le k(n-2k-1)\binom{4k}{2}\]
    which is false for $n$ big enough.
\end{proof}

\end{document}